\newcommand\ttimes{\mathbin{\ThisStyle{\ensurestackMath{%
  \stackengine{-1\LMpt}{\SavedStyle\times}
  {\SavedStyle_{\hstretch{.9}{\mkern1mu\sim}}}{O}{c}{F}{T}{S}}}}}
\newtheorem{theorem}{Theorem}[section]
\newtheorem{proposition}[theorem]{Proposition}
\newtheorem{lemma}[theorem]{Lemma}
\newtheorem{claim}[]{Claim}
\newtheorem{fact}[]{Fact}
\theoremstyle{definition}
\newtheorem{definition}[theorem]{Definition}
\theoremstyle{remark}
\numberwithin{equation}{section}
\newcommand{\mf}{\mathbf}
\newcommand{\mb}{\mathbb}
\newcommand{\mc}{\mathcal}
\newcommand{\mk}{\mathfrak}
\newcommand{\mr}{\mathrm}
\newcommand{\oli}{\overline}
\newcommand{\omc}[1]{\oli{\mc #1}}
\newcommand{\eps}{\varepsilon}
\newcommand{\wti}{\widetilde}
\newcommand{\smf}{\smallfrown}
\newcommand{\Is}{\mathfrak{Is}}
\newcommand{\Area}{\mathrm{Area}}
\newcommand{\Id}{\mathrm{Id}}
\newcommand{\rom}[1]{\expandafter\romannumeral #1}
\newcommand{\Rom}[1]{\uppercase\expandafter{\romannumeral #1}}
\DeclareMathOperator{\Index}{index}
\DeclareMathOperator{\Diff}{Diff}
\DeclareMathOperator{\spt}{spt}
\DeclareMathOperator{\image}{image}
\title[Embedded minimal tori in three-spheres]{Existence of embedded minimal tori in three-spheres with positive Ricci curvature}
\author{Xingzhe Li}
\address{Cornell University, Department of Mathematics, Ithaca, New York 14850}
\email{xl833@cornell.edu}
\author{Zhichao Wang}
\address{Shanghai Center for Mathematical Science, 2005 Songhu Road, Fudan University, Shanghai, 200438, China}
\email{zhichao@fudan.edu.cn}
\begin{document}

\begin{abstract}
In this paper, we prove the strong Morse inequalities for the area functional in the space of embedded tori and spheres in the three sphere. As a consequence, we prove that in the three dimensional sphere with positive Ricci curvature, there exist  at least 4 distinct embedded minimal tori. Suppose in addition that the metric is bumpy, then the three-sphere contains at
least 9 distinct embedded minimal tori. The proof relies on a multiplicity one theorem for the Simon-Smith min-max theory proved by the second author and X. Zhou \cite{wangzc2023existenceFour}. 
\end{abstract}

\maketitle



\section{Introduction}
In his famous 1982 Problem Section \cite{Yau82}, S. T. Yau posed the problem to prove the existence of four distinct embedded minimal spheres in any manifold diffeomorphic to $S^3$. Around the same time, Simon-Smith \cite{Smith82} proved the existence of at least one embedded minimal sphere in $S^3$ with an arbitrary metric, using a variant of the min-max theory for minimal hypersurfaces developed by Almgren \cites{Alm62,Alm65} and Pitts \cite{Pi}; see also Schoen-Simon \cite{SS} and Colding-De Lellis \cite{Colding-DeLellis03}. Later on, White \cite{Whi91}, Haslhofer-Ketover \cite{HK19} proved the existence of at least two embedded minimal spheres for metrics with positive Ricci or bumpy metrics. Recently, the second author together with X. Zhou  confirmed this conjecture for bumpy metrics and metrics with positive Ricci curvature by developing the Multiplicity One Theorem for Simon-Smith min-max theory; see \cite{wangzc2023existenceFour}.

Motivated by the progress on Yau's conjecture, we set out to investigate the high genus problem. It is noteworthy that the space of all unoriented Clifford tori in round $S^3$ can be parametrized by $\mathbb{RP}^2 \times \mathbb{RP}^2$, whose Lusternik-Schnirelmann number and Morse number are equal to $5$ and $9$, respectively. By a perturbation argument applied to the area functional \cite{Whi91}*{Theorem 3.2}, White proved that every sufficiently round metric on $S^3$ admits at least five distinct minimal embedded tori, and a generic such metric must admit at least nine distinct minimal embedded tori. One would then expect five distinct minimal tori in a Riemannian $S^3$. In this work, by employing the multiplicity one theorem for Simon-Smith min-max theory \cite{wangzc2023existenceFour}, we study the existence of embedded minimal tori in $S^3$ with positive Ricci metrics and prove our main theorem as follows.  

\begin{theorem}\label{thm: main theorem}
Assume that $g$ is a metric positive Ricci curvature on $S^3$. Then there exist at least four distinct embedded minimal tori in $(S^3, g)$. Suppose in addition that $g$ is bumpy. Then there exist at least nine distinct embedded minimal tori. In particular, they have index 5,6,6,7,7,7,8,8,9. 
\end{theorem}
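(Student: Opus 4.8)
The plan is to run a Lusternik–Schnirelmann/Morse-theoretic min-max argument in the space of embedded surfaces of genus $\le 1$ in $S^3$, using the multiplicity one theorem of \cite{wangzc2023existenceFour} to guarantee that the min-max critical values are realized by embedded minimal surfaces counted without multiplicity, together with the topology of the Clifford-torus parameter space $\mathbb{RP}^2\times\mathbb{RP}^2$ (cup-length $5$, Morse number $9$) to produce the count.

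So the first step I would carry out is to set up the min-max machinery. I would fix the space of sweepouts by embedded tori and spheres, and define the $k$-parameter min-max widths $\omega_k$ via families swept out by subsets of the parameter space detecting nontrivial cohomology classes in $H^*(\mathbb{RP}^2\times\mathbb{RP}^2)$; here the ``$5$'' comes from the cup-length (for the Lusternik–Schnirelmann count) and the ``$9$'' from the total Betti number / Morse number (for the bumpy count). The key structural input is the strong Morse inequalities for the area functional on this space of embedded surfaces, which is the content of the earlier portion of the paper, and which bounds below the number of minimal surfaces of each index in terms of the Betti numbers of the sweepout space.

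Next I would invoke positive Ricci curvature to rule out degeneracies. In a positively curved $S^3$ there are no stable minimal surfaces (so index $\ge 1$ for every minimal surface obtained), and by Frankel's theorem any two embedded minimal surfaces must intersect, which prevents distinct min-max values from being realized by disjoint unions; more importantly, combined with the multiplicity one theorem, the min-max surfaces are two-sided, embedded, with multiplicity one, and their Morse index equals the number of parameters $k$. I would then need the genus-bound from the Simon-Smith theory (Ketover's genus control, or the bound in \cite{wangzc2023existenceFour}) to ensure the surfaces produced from the torus-sweepouts actually have genus $\le 1$; since spheres are already accounted for by the earlier Yau-four-spheres result and positive Ricci forces the first several widths to be realized by spheres or tori, one argues that the higher widths must be tori. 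Combining: the cup-length $5$ gives $5$ critical values, of which (after removing those forced to be spheres, appealing to the structure of $H^*(\mathbb{RP}^2\times\mathbb{RP}^2)$ versus the sphere parameter space $\mathbb{RP}^3$ or $S^3/\ldots$) at least $4$ must be distinct embedded minimal tori; under bumpiness, all critical points are nondegenerate, the strong Morse inequalities become sharp enough to yield $9$ tori, and a careful index accounting (balancing the generators in degrees realizing indices $5,6,6,7,7,7,8,8,9$, matching the Poincaré polynomial $(1+t)^2(1+t^2)^2$ shifted appropriately) pins down the index sequence.

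The main obstacle I expect is the genus control together with the separation of the torus-widths from the sphere-widths: a priori a min-max sequence of tori could degenerate to a sphere (or a sphere with multiplicity, or a branched/immersed object), and one must use the multiplicity one theorem plus Ketover-type genus bounds to exclude this, and then carry out the delicate combinatorial argument — comparing the cohomology rings and the min-max widths of the torus parameter space against those of the sphere parameter space — to certify that enough distinct critical values genuinely come from genus-one surfaces rather than collapsing to the already-counted minimal spheres. Handling the bumpy case's exact index list $5,6,6,7,7,7,8,8,9$ requires matching this against the degrees of a basis of $H^*(\mathbb{RP}^2\times\mathbb{RP}^2;\mathbb{Z}_2)$ under the min-max index formula $\Index = k$ for the $k$-parameter width, which is the last and most technical bookkeeping step.
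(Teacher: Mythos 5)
Your overall strategy — relative min-max, strong Morse inequalities, the multiplicity one theorem of \cite{wangzc2023existenceFour}, and the topology of $\mathbb{RP}^2\times\mathbb{RP}^2$ — is indeed the architecture of the paper, but there are genuine gaps and one structural misconception that would sink the argument.

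First, the count ``cup-length $5$ gives $5$ critical values, remove the sphere ones, get $4$ tori'' is not what happens, and cannot happen, because the relevant cap-product chain lives in \emph{relative} homology $H_*(\omc T,\omc S;\mb Z_2)$. In that setting the class detected by the sweepout space is a generator $\sigma$ in degree $9$, and the cohomology class $u$ one caps with is the pullback to the sweepout space of the generator of $H^1(\mathcal Z_2(S^3;\mb Z_2);\mb Z_2)$, which lands on $\lambda+\mu$ (not $\lambda$ or $\mu$ separately) in $H^1(\mb{RP}^2\times\mb{RP}^2;\mb Z_2)=\mb Z_2\lambda\oplus\mb Z_2\mu$. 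Since $(\lambda+\mu)^4=0$ mod $2$ while $(\lambda+\mu)^3\neq 0$, one gets exactly the four widths $W(\sigma)>W(\sigma\smf u)>W(\sigma\smf u^2)>W(\sigma\smf u^3)>0$, each of which produces a torus \emph{directly} via the Relative Homology Min-max Theorem; there is no post hoc subtraction of spheres, and there is no fifth width in the positive Ricci case. The number $4$ is intrinsic to the relative setup, not $5-1$.

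Second, the heart of the proof — the computation $H_{j+5}(\omc T,\omc S;\mb Z_2)\cong H_j(\mb{RP}^2\times\mb{RP}^2;\mb Z_2)$ for $j\ge 0$ — is entirely missing from your sketch. You describe it as ``comparing cohomology rings'' but this is a substantial piece of work that occupies the bulk of Section~\ref{sec: construction of relative sweepouts}: one must reduce via the Relative Homology Min-max Theorem and excision to the homology of $(\omc T^t,\omc T^s\cup\omc S^t)$ for $s<2\pi^2<t$ on the round sphere (using the Lawson conjecture to know every minimal torus there is a Clifford torus of area $2\pi^2$), construct an explicit $9$-parameter family $\Phi:\mc X_9(\delta)\to\mc T$ via the conformal group and normal flows, show $\Phi_*$ is injective on relative homology by a degree argument in the spirit of Marques--Montezuma--Neves \cite{Marques-Montezuma-Neves20} (this uses the Smale conjecture via the retraction $\mc D:\mc T\to\mc T_{min}$), and finally invoke White's Perturbation Theorem \cite{Whi91} to cap the total rank at $9$, forcing equality. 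Without this computation there is no input to feed into the strong Morse inequalities, so the index list $5,6,6,7,7,7,8,8,9$ cannot be extracted. (As a minor point, the $\mb Z_2$-Poincaré polynomial of $\mb{RP}^2\times\mb{RP}^2$ is $(1+t+t^2)^2=1+2t+3t^2+2t^3+t^4$, giving ranks $1,2,3,2,1$ summing to $9$; your $(1+t)^2(1+t^2)^2$ is incorrect.)

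Third, the genus control you flag as the main obstacle is real, but the remedy is not just ``multiplicity one plus Ketover's genus bound.'' Simon--Smith only gives a genus \emph{upper} bound, so a sweepout of tori could a priori produce a minimal sphere. The paper's solution is precisely the relative formulation: in the proof of Theorem~\ref{thm:relative homology min-max}, when the putative min-max surface is a sphere, one uses the local min-max homotopies and the interpolation lemma to cut out the neighborhood of the sphere; because the excised piece lies in $\omc S^t$, removing it does not change the relative homology class, yet it strictly lowers the maximal area — a contradiction. That cut-and-paste step is the genus \emph{lower} bound, and it is a different mechanism from anything in your proposal. Frankel's theorem, which you invoke, plays no role in the actual argument.

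In short: keep the Morse/LS framework and the multiplicity one input, but the missing ingredient is the pair of novelties in this paper — the relative homology min-max theorem (to force genus exactly one) and the computation of $H_*(\omc T,\omc S;\mb Z_2)$ via the round sphere, Smale, Lawson, and White's perturbation theorem.
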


Our Theorem \ref{thm: main theorem} is significant in the sense that it gives an affirmative answer to the existence of multiple minimal tori in any positively Ricci curved $S^3$. We point out that by generalizing the multiplicity one theorem for Simon-Smith min-max theory to $G$-invariant case, the first author together with T. Wang and X. Yao \cite{XZ-TR-YX} construct the minimal tori in lens space which has Heegaard genus one.

The main technique for constructing multiple minimal tori is the relative strong Morse inequalities for the area functional on the space of surfaces with bounded topology. Analogous inequalities on the space of mod $2$ flat chains were first confirmed by Marques-Montezuma-Neves in \cite{Marques-Montezuma-Neves20}, based on which they established a Morse theory for minimal hypersurfaces in a Riemannian manifold $(M^{n+1}, g)$ of dimension $3 \leq (n + 1) \leq 7$. The essence is that generically, a closed minimal hypersurface produced by min-max over $k$-parameter sweepouts must have index equal to $k$ \cites{Marques-Neves16, MN18}. The proof shares the same spirit as that of \cite{MN18}*{Theorem 7.2} since both proofs rely on the local min-max property of White \cite{Whi94}. Notice that the strong Morse inequalities in \cite{Marques-Montezuma-Neves20} directly imply the existence of infinitely many minimal hypersurfaces for generic case; see also \cite{Zhou19} and a complete proof to this conjecture by A. Song \cite{Song18}.

Once we move into the realm of minimal surfaces of controlled topological types, the strong Morse inequalities in \cite{Marques-Montezuma-Neves20} become insufficient to tell the topology apart. As one key novelty of this paper, we propose and prove the relative strong Morse inequalities for the area functional on the space of surfaces with bounded topology in positively Ricci curved $S^3$, which help identify minimal tori out of minimal spheres. In the following, denote by $c_k(\ell)$ the number of minimal tori with index equal to $k$ and area less than $\ell$, and by $\beta_k(\ell)$ the rank of $H_k(\oli{\mc T}^\ell,\oli{\mc S}^\ell;\mb Z_2)$, where $\oli{\mc T}^\ell$ (resp. $\oli{\mc S}^\ell$) represent the space of embedded tori and spheres (resp. embedded spheres). 
 
\begin{theorem}[Relative Strong Morse Inequalities]\label{thm: main theorem 2}
 For any fixed $\ell\in (0,\infty)$, we have $\beta_k(\ell)<\infty$ and the Strong Relative Morse Inequalities for every $k\in \mb Z_+$:
 \[    c_k(\ell)-c_{k-1}(\ell)+\cdots +(-1)^{k}c_0(\ell)\geq \beta_k(\ell)-\beta_{k-1}(\ell)+\cdots+(-1)^k\beta_0(\ell).
 \]
 In particular, we have 
 \[    c_k(\ell)\geq \beta_k(\ell)\]
for every $k\in \mb Z_+$.
\end{theorem}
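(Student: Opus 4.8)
The plan is to develop a Morse theory for the area functional $\mathcal{A}$ on the sublevel sets of the pair $(\overline{\mathcal{T}}^\ell,\overline{\mathcal{S}}^\ell)$, following the scheme of Marques--Montezuma--Neves \cite{Marques-Montezuma-Neves20} and \cite{MN18}*{Theorem 7.2} but carried out in the Simon--Smith category of embedded surfaces of genus at most one, rather than mod-$2$ flat chains, so that the diffeomorphism type of a surface is visible to the homology. For $a\in(0,\ell]$ let $\overline{\mathcal{T}}^{\,a}\subseteq\overline{\mathcal{T}}^\ell$ and $\overline{\mathcal{S}}^{\,a}\subseteq\overline{\mathcal{S}}^\ell$ be the subspaces of members with area $\le a$, and call $c>0$ \emph{critical} if $c=\mathcal{A}(\Sigma)$ for some smooth embedded minimal surface $\Sigma\subset(S^3,g)$ of genus $0$ or $1$. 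I would first prove the inequalities for a bumpy metric; the general positive-Ricci case then follows either by approximating $g$ in $C^\infty$ by bumpy metrics (which are $C^\infty$-generic by White's theorem) and a limiting argument, or is vacuous, since whenever some $c_k(\ell)$ is infinite --- as happens, e.g., for the round metric --- the corresponding inequality holds trivially. So assume $g$ is bumpy.

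The argument then rests on three ingredients. First, \emph{finiteness of the critical set}: by the Choi--Schoen compactness theorem, the embedded minimal surfaces of genus $\le 1$ and area $\le\ell$ in the positively curved manifold $(S^3,g)$ form a compact family, hence, by bumpiness, a finite one; so there are finitely many critical values $0<c_1<\dots<c_N<\ell$, each realized by finitely many nondegenerate minimal surfaces, and, by positive Ricci curvature, each of index $\ge 1$. Second, \emph{deformation away from critical values}: if $[a,b]\subset(0,\ell)$ contains no critical value, then $(\overline{\mathcal{T}}^{\,b},\overline{\mathcal{S}}^{\,b})$ deformation retracts onto $(\overline{\mathcal{T}}^{\,a},\overline{\mathcal{S}}^{\,a})$; this comes from the interpolation and pull-tight machinery of Simon--Smith min-max \cites{Colding-DeLellis03,wangzc2023existenceFour}, since the absence of a stationary configuration of genus $\le 1$ with area in $[a,b]$ allows one to push the area of an entire sweepout below $a$. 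Third, \emph{local handle attachment}: near a nondegenerate embedded minimal surface $\Sigma$ of index $k$, White's local min-max theorem \cite{Whi94}, together with the catenoid-type local deformations, shows that crossing the level $\mathcal{A}(\Sigma)$ amounts, up to homotopy, to attaching a $k$-cell to the sublevel set; since all the surfaces taking part in this local deformation are normal graphs over $\Sigma$, and hence diffeomorphic to $\Sigma$, the cell lies in $\overline{\mathcal{T}}^\ell\setminus\overline{\mathcal{S}}^\ell$ when $\Sigma$ is a torus and inside $\overline{\mathcal{S}}^\ell$ when $\Sigma$ is a sphere, so in the relative homology $H_*(\overline{\mathcal{T}}^\ell,\overline{\mathcal{S}}^\ell;\mathbb{Z}_2)$ only the torus cells survive. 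The multiplicity one theorem of \cite{wangzc2023existenceFour} enters here decisively: it ensures that the critical configurations governing min-max over torus and sphere sweepouts are genuine embedded minimal surfaces of multiplicity one, hence of genus exactly $0$ or $1$, ruling out degenerate higher-multiplicity critical points which would carry the wrong index and spoil the handle picture.

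Granting these ingredients, the filtration of $(\overline{\mathcal{T}}^\ell,\overline{\mathcal{S}}^\ell)$ by the pairs $(\overline{\mathcal{T}}^{\,c_i+\eps},\overline{\mathcal{S}}^{\,c_i+\eps})$, $i=0,\dots,N$, is a CW-type filtration whose $i$-th relative graded piece is a $\mathbb{Z}_2$-vector space of dimension, in degree $k$, equal to the number of index-$k$ minimal tori of area in $(c_{i-1},c_i)$; in particular all these groups are finite-dimensional, which gives $\beta_k(\ell)<\infty$. Feeding the long exact sequences of the successive triples into the standard algebraic lemma --- that the alternating sum of the relative Poincar\'e polynomials of the filtration exceeds that of $(\overline{\mathcal{T}}^\ell,\overline{\mathcal{S}}^\ell)$ by $(1+t)$ times a polynomial with nonnegative coefficients --- yields the strong relative Morse inequalities $\sum_{j=0}^{k}(-1)^{k-j}c_j(\ell)\ge\sum_{j=0}^{k}(-1)^{k-j}\beta_j(\ell)$; adding the inequalities at levels $k$ and $k-1$ then gives $c_k(\ell)\ge\beta_k(\ell)$.

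I expect the third ingredient to be the crux. White's local theory must be run so that the whole local deformation --- in particular the downward retraction that fills in the stable normal directions --- stays inside the space of embedded surfaces of genus $\le 1$, which is delicate precisely because a torus can develop a pinching neck and degenerate to a sphere in the course of such a deformation. Keeping this in check requires combining the Simon--Smith genus monotonicity with the multiplicity one theorem so that every genus drop is recorded by the connecting homomorphism $H_k(\overline{\mathcal{T}}^\ell,\overline{\mathcal{S}}^\ell;\mathbb{Z}_2)\to H_{k-1}(\overline{\mathcal{S}}^\ell;\mathbb{Z}_2)$ rather than showing up as an uncontrolled loss of cells; a secondary difficulty is making the deformation in the second ingredient uniform across an entire sweepout, by the discrete-to-continuous interpolation theorems, so that it can be carried out while preserving the subspace $\overline{\mathcal{S}}^\ell$.
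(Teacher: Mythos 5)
Your three-ingredient framework is the right outline, but two of the three ingredients, as you state them, assert more than the min-max machinery can deliver, and the third glosses over the real crux of the argument.

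On ingredient 2, you claim that when $[a,b]$ contains no critical value, $(\overline{\mathcal{T}}^{\,b},\overline{\mathcal{S}}^{\,b})$ \emph{deformation retracts} onto $(\overline{\mathcal{T}}^{\,a},\overline{\mathcal{S}}^{\,a})$. There is no gradient flow for the area functional on these spaces, and neither the pull-tight procedure nor the interpolation theorems produce a retraction of the pair of subspaces; what they produce is the weaker statement that any individual relative cycle mapping into $(\overline{\mathcal{T}}^{\,b},\overline{\mathcal{S}}^{\,b})$ can be homotoped to one landing in $(\overline{\mathcal{T}}^{\,a},\overline{\mathcal{S}}^{\,a})$, which is all that one can and all that one needs: it gives $H_j(\overline{\mathcal{T}}^{\,b},\overline{\mathcal{T}}^{\,a}\cup\overline{\mathcal{S}}^{\,b};\mathbb{Z}_2)=0$. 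The paper gets this vanishing indirectly, by a detection argument: if the relative group were nontrivial, the relative homology min-max theorem would produce a minimal torus with area in $[a,b)$, a contradiction. The same remark applies to your ingredient 3: ``attaching a $k$-cell up to homotopy'' is a homotopy-theoretic statement that the argument does not establish and that is not what is proved. What is actually proved (the paper's Proposition \ref{prop:local relative min-max}) is that the local relative Betti numbers $b_j(t,s)$ across an index-$k$ torus equal $\delta_{jk}$, and the proof is homological, via (a) min-max detection for the vanishing, (b) an explicit $k$-parameter family $\{F_v(\Sigma)\}$ from White's local theory to produce a nontrivial class, and (c) a cut-and-paste argument to show there is at most one.

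The genuine gap is that your proposal does not actually prove that the min-max critical surface attached to a nontrivial class in $H_k(\overline{\mathcal{T}}^{\,t},\overline{\mathcal{T}}^{\,s}\cup\overline{\mathcal{S}}^{\,t};\mathbb{Z}_2)$ is a \emph{torus} rather than a sphere. You gesture at this (genus monotonicity, multiplicity one, ``genus drop recorded by the connecting homomorphism'') and correctly flag it as the crux, but the genus bound in Simon--Smith min-max only goes one way --- sweepouts of tori can still produce minimal spheres --- and the multiplicity one theorem does not by itself exclude this. The paper's Theorem \ref{thm:relative homology min-max} handles it by a careful case analysis on the boundary min-max level $\mathbf{L}(\Pi_\partial^j)$ relative to $W(\tau)$ and $s$: in each case one deforms the sweepout near a putative minimal sphere, replaces the high-area slices by small spheres lying in the subspace $\overline{\mathcal{S}}^{\,t}$ (so that the relative cycle class is preserved), and thereby lowers the max area below $W(\tau)$, a contradiction. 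Without an argument of this kind, your ``only the torus cells survive'' is an assertion, not a conclusion. The algebraic endgame you describe (filtration, long exact sequences, nonnegative remainder) is correct and is essentially equivalent to the paper's route via subadditivity of $b_k(t,s)$ and of the alternating sums $\mathfrak{s}_k(t,s)$.
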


The Morse theory for the area functional has been built on a series of paper leaded by Marques and Neves. In \cite{Marques-Neves16}, they proved the index upper bound, which also holds true for the Simon-Smith setting. Later on,  X. Zhou solved the Multiplicity One Conjecture in an exciting work \cite{Zhou19} based on his joint work \cites{ZZ18, zz17} with Z. Zhu; see \cite{CM20} for three dimensional case by Chodosh-Mantoulidis. Using X. Zhou's breakthrough, Marques-Neves proved the index lower bound for the volume spectrum (or in the homology classes) in their celebrated work \cite{MN18}. Then in a recent work \cite{Marques-Montezuma-Neves20} by Marques-Montezuma-Neves, they proved the strong Morse inequalities, thus completing the Morse-theoretic program. In this work, we build a Morse theory on the space of tori in $S^3$, which also holds true for any closed three manifold with positive Ricci curvature. Moreover, we develop a relative version to avoid the genus degeneration, thus allowing us to construct embedded minimal tori. 

\subsection{Sketch of the proof}

To prove Theorem \ref{thm: main theorem 2}, we follow similar strategy as the proof of the strong Morse inequalities on the space of mod $2$ flat chains \cite{Marques-Montezuma-Neves20}, that is, to set up the min-max theory in a nontrivial $k$-th homology class of a level set, and then employ an interpolation (cut-and-paste procedure) to show that the local min-max family of an index $k$ minimal surface inside the level set uniquely and only contributes to the $k$-th homology class. 

However, the Simon-Smith min-max theory only has genus upper bound which may produce minimal spheres by a sweepout of tori even in the multiplicity one case. To overcome it, we study the relative homology min-max in the space $(\omc T,\omc S)$ (see Section \ref{sec: preliminaries} for definitions). Then the major problem is to determine the homology groups of such relative spaces, which is highly nontrivial to our knowledge.

\medskip
Let us briefly introduce the new ideas to address the above two issues. Our proof of genus lower bound is inspired by Marques-Neves' proof of the Morse index lower bound \cite{MN18}. We argue by contradiction. By doing another tightening process, we may assume that in the minimizing sequence, the surfaces with area close to the min-max value should be close to a smooth sphere. By applying the two homotopies in \cite{MN18}, one may assume that the surfaces of large area are intrinsically generated by the $k$-parameter family (see \cite{MN18}*{Section 6.1} for White's Local Min-max Theorem) that decreases the area of minimal spheres; the Multiplicity One Theorem for Simon-Smith min-max theory proved by the second author and X. Zhou in \cite{wangzc2023existenceFour} is essentially used in this part. Then we cut off the small chains that lie in the image of the homology class in the relative spaces. Since the surfaces that we cut off are all spheres, it will not affect the homology class (clearly, they are not in the same homotopy class). Moreover, the area of maximal slices is deformed down by a definite amount during this process, which contradicts the definition of min-max width. This proves the genus lower bound. 

To determine the topology of the relative spaces, we consider it on the round three sphere. The three main ingredients are the Smale Conjecture by A. Hatcher \cite{Hat83}, the Perturbation Theorem by B. White \cite{Whi91}*{Theorem 3.2} and the Lawson Conjecture by S. Brendle \cite{Brendle-Lawson-conj}. Since each embedded minimal torus has area equal to $2\pi^2$ (by Lawson Conjecture), it follows from our relative homology min-max theorem that $\omc T$ is ``weakly homotopic" to $\omc T^{2\pi^2+\eps}\cup \omc S$, the space of tori with area bounded by $2\pi^2+\eps$ union the space of spheres. Similarly, $\omc T^{2\pi^2-\eps}\cup \omc S$ is ``weakly homotopic" to $\omc S$. Hence the problem has been reduced to determine the homology classes of $(\omc T^{2\pi^2+\eps}\cup \omc S,\omc T^{2\pi^2-\eps}\cup \omc S)$. We will construct a 9-parameter family of tori and prove that it induces an injective map between the relative homology groups. This part involves the Smale Conjecture and the argument by Marques-Montezuma-Neves \cite{Marques-Montezuma-Neves20}*{Claim 3.7}. Finally, we use White's Perturbation Theorem to show that there are no other relative homology classes.

\subsection{Outline of the paper}
The paper is organized as follows. In Section \ref{sec: preliminaries}, we collect preliminary materials, and prove the interpolation theorem and deformation theorem. In Section \ref{sec: relative homology min-max theory}, we establish the index lower bound for Simon-Smith homology min-max theory and show the relative homology min-max theorem. This sets up the stage for proving the relative strong Morse inequalities in Section \ref{sec: morse inequalities for relative spaces}. We compute the homology groups of the relative spaces and finish the proof of our main theorem in Section \ref{sec: construction of relative sweepouts}.     

\subsection*{Acknowledgement}
We would like to thank Professor Xin Zhou for the insightful guidance and many valuable discussions on this project. We also thank Professors Jason Manning, Tongrui Wang, Feng Wang, Xiaolei Wu for helpful discussions on algebraic topology. Part of this work was done when X.L. visited Shanghai Center for Mathematical Sciences and he would like to express his gratitude for their hospitality. X.L. is supported by NSF grant DMS-1945178.

During the Workshop on Minimal Surfaces and Mean Curvature Flows (Beijing July 8-12), we were informed that Adrian Chu and Yangyang Li also proved a similar result in an independent work.

\section{Preliminaries}\label{sec: preliminaries} 
In this paper, we consider the three-sphere $S^3$. We also use $\mb S^3$ to denote the standard unit round three-sphere. Consider the spaces
\[
\mc S :=\{\phi(\mb S^2)\big|\phi: \mb S^2\to \mb S^3 \text{ is a smooth embedding}\},
\]
and 
\[
\mc T:=\{\phi(\mb T^2)\big|\phi:\mb T^2\to\mb S^3 \text{ is a smooth map embedding}\}.
\]
 Denote by $\oli{\mc S}$ the union of $\mc S$ and the space of embedded circles and points. We use $\oli{\mc T}$ to denote $\mc T\cup \oli{\mc S}$. Given $\ell >0$, denote by 
\begin{gather*}  
    \oli{\mc T}^\ell:=\{\Sigma\in \oli{\mc T}; \mc H^2(\Sigma)<\ell\};\quad    \oli{\mc S}^\ell:=\{\Sigma\in \oli{\mc S}; \mc H^2(\Sigma)<\ell\}.
\end{gather*}
Denote by $\mc T_{min}(g)$ the collection of embedded minimal tori w.r.t. the metric $g$. We always consider the smooth topology on $\mc T$ and $\omc S$ and extend this topology to $\omc T$.
\begin{definition}
We say that $\{T_i\}\subset \omc T$ converges to $T$ if and only if one of the following holds:
\begin{enumerate}
    \item $T$ is one dimensional and the convergence is smooth;
    \item $T_i$ and $T$ are all tori (or spheres) and the convergence is smooth in the classical sense;
    \item $T_i$ are tori, $T$ is a sphere, and $T_i$ converges to $T$ locally smoothly away from at most one point.
\end{enumerate}
Such a topology is called the {\em generalized smooth topology} on $\omc T$ (or just {\em smooth topology} for simplicity). 
\end{definition}

We recall some notations from Geometric Measure Theory. In the following, let $(M^3, g)$ denotes a closed, oriented, $3$-dimensional Riemannian manifold isometrically embedded in some $\mathbb{R}^J$, and $U \subset M$ an open subset ($U$ may be equal to $M$). 

\begin{itemize}
    \item $\mc H^2$ : the $2$-dimensional Hausdorff measure in the metric space $(M, d)$;
    \item $\mathbf{I}_{2}(M; \mathbb{Z}_2)$: the space of $2$-dimensional mod $2$ flat chains in $\mathbb{R}^J$ and support contained in $M$; 
    \item $\mathcal{Z}_{2}(M; \mathbb{Z}_2)$: the space of mod $2$ flat cycles or mod $2$ flat cycles; 
    \item $\mathcal{V}_2(M)$: the space of $2$-varifolds in $M$;
\end{itemize}

Given $T\in {\bf I}_2(M;\mathbb{Z}_2)$,  we denote by $|T|$ and $||T||$ the integral varifold and the Radon measure in $M$ associated with $|T|$, respectively;  For $V\in \mathcal{V}_2(M)$, $||V||$ denotes the Radon measure in $M$ associated with $V$. 

The {\it mass} of $T \in {\bf I}_2(M;\mathbb{Z}_2)$ is denoted by ${\bf M}(T)$, and the metric ${\bf M}(T_1,T_2)={\bf M}(T_1-T_2)$ defines the mass topology. The {\it flat metric} 
 $$
 \mathcal F(T_1,T_2) = \inf \{{\bf M}(P)+{\bf M}(Q): T_1-T_2=P+\partial Q\}
 $$
 induces the flat topology. The definition of ${\bf F}$-{\it metric} appeared in the book of Pitts \cite{Pi}*{page 66} and it induces the varifold weak topology on $\mathcal{V}_2(M)\cap \{V: ||V||(M) \leq A\}$ for any $A$. Note that  
$$||V||(M) \leq ||W||(M) +{\bf F}(V,W)$$ for all $V,W \in \mathcal{V}_2(M)$. Denote by ${\overline{\bf B}^{\bf F}_{\delta}(V)}$ and ${\bf B}^{\bf F}_{\delta}(V)$ the closed and open balls under $\mathbf{F}$-metric respectively, with radius $\delta$ and center $V \in \mathcal{V}_2(M)$. Similarly, denote by ${\overline{\bf B}^{\mathcal F}_{\delta}(T)}$ and ${\bf B}^{\mathcal F}_{\delta}(T)$ the corresponding balls under the flat metric with center $T \in \mathcal{Z}_2(M;\mathbb{Z}_2)$. 
Finally,  the ${\bf F}$-{\it metric} on ${\bf I}_2(M;\mathbb{Z}_2)$ is given by
$$ {\bf F}(S,T)=\mathcal{F}(S-T)+{\bf F}(|S|,|T|).$$
Since ${\bf F}(|S|,|T|) \leq {\bf M}(S,T)$, we know ${\bf F}(S,T) \leq 2{\bf M}(S,T)$ for any $S,T \in {\bf I}_2(M;\mathbb{Z}_2)$.

In this work, we equip ${\bf I}_2(M;\mathbb{Z}_2)$ and ${\mathcal Z}_2(M;\mathbb{Z}_2)$ with the topology induced by the flat metric. The space $\mathcal{V}_2(M)$ is considered with the weak topology of varifolds.

\subsection{Notations in min-max theory}
Now consider $M^3 = S^3$ equipped with a metric $g$. Let $X$ be a finite dimensional cubical complex, and $Z \subset X$ be a subcomplex.
Given $\Phi_{0}: X \rightarrow \oli{\mc T}$ a continuous map in the smooth topology, we denote by $\Pi$ the set of all continuous maps $\Phi: X \rightarrow \oli{\mc T}$ which is homotopic to $\Phi_0$ relative to $\Phi_{0}|_{Z}: Z \rightarrow \oli{\mc T}$. We refer to such a $\Phi$ a $k$-{\em sweepout}.  

\begin{definition}
    Given $(X, Z)$, $\Phi_0$, and $\Pi$ as above, the {\em min-max value} of $\Pi$ is defined by: 
    \[
    L = \mathbf{L}(\Pi) = \inf_{\Phi\in\Pi} \sup_{x \in X} \mathcal{H}^2(\Phi(x)).   
    \]
    A sequence $\{\Phi_{i}\}_{i \in \mathbb{N}} \subset \Pi$ is called a {\em minimizing sequence} if 
    \[
    \mathbf{L}(\Phi_{i}) := \sup_{x \in X} \mathcal{H}^2(\Phi_{i}(x)) \rightarrow L, \text{ when } i \rightarrow \infty. 
    \]
    A subsequence $\{\Phi_{i_j}(x_j): x_j \in X\}_{j \in \mathbb{N}}$ is called a {\em min-max (sub)sequence} if 
    \[
    \mathcal{H}^{2}(\Phi_{i_j}(x_j)) \rightarrow L, \text{ when } j \rightarrow \infty.
    \]
    The {\em critical set} of a minimizing sequence $\{\Phi_{i}\}$ is defined by 
    \[ \mathbf{C}(\{\Phi_i\})=\left\{V \in \mathcal{V}_2(M)\left|\,
    \begin{aligned}   
        & \exists \text{ a min-max subsequence }\{\Phi_{i_j}(x_j)\} \text{ such}\\
        & \text{that } \mathbf{F}(|\Phi_{i_j}(x_j)|, V) \to  0 \text{ as } j\to\infty
    \end{aligned}\right\}\right..
    \]
\end{definition}

By constructing a tightening map adapted to the area functional and applying it to a minimizing sequence, we can prove the following pull-tight theorem analogous to \cite{wangzc2023existenceFour}*{Theorem 2.7} and \cite{Colding-DeLellis03}*{Proposition 4.1}.

\begin{theorem}[Pull-tight] 
Let $(X, Z)$, $\Phi_0$, and $\Pi$ be as above. Given a minimizing sequence $\{\Phi_{i}^{*}\} \subset \Pi$ associated with the area functional, there exists another minimizing sequence $\{\Phi_{i}\} \subset \Pi$ such that $\mathbf{C}(\{\Phi_i\}) \subset \mathbf{C}(\{\Phi_i^{*}\})$ and every element $V \in \mathbf{C}(\{\Phi_i\})$ is either stationary, or belongs to $B = \Phi_{0}(Z) \subset \oli{\mc T}$.
\end{theorem}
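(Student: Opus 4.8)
The plan is to run the classical tightening (``pull-tight'') argument of Pitts and Colding--De Lellis \cite{Colding-DeLellis03}, in the form used in \cite{wangzc2023existenceFour}, and to carry out the deformation entirely by ambient isotopies of $S^3$, so that the diffeomorphism type of each slice is automatically preserved and the tightened family again takes values in $\oli{\mc T}$. Write $L=\mathbf L(\Pi)$ and fix $C=2(L+1)$. The set $K=\{V\in\mc V_2(M):\|V\|(M)\le C\}$ is compact and metrizable in the $\mathbf F$-topology. Let $\mc A\subset K$ be the stationary varifolds and $\mc B=\{|\Sigma|:\Sigma\in B\}\subset K$ the set of varifolds carried by elements of $B=\Phi_0(Z)$; both are compact, and we must tighten precisely on $K\setminus(\mc A\cup\mc B)$. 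For $j\in\mb Z_+$ put $A_j=\{V\in K:\mathbf F(V,\mc A\cup\mc B)\ge 1/j\}$, a compact set exhausting the open set $K\setminus(\mc A\cup\mc B)$.

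For each $V\in K\setminus(\mc A\cup\mc B)$, since $V$ is not stationary there is some $X\in\mathfrak X(S^3)$ with $\delta V(X)<0$, and by $\mathbf F$-continuity of $V\mapsto\delta V$ this $X$ still has strictly negative first variation on an $\mathbf F$-neighborhood of $V$. Choosing a locally finite cover of $K\setminus(\mc A\cup\mc B)$ by such neighborhoods and a subordinate partition of unity $\{\phi_\alpha\}$ with local fields $X_\alpha$, set $X_V=\sum_\alpha\phi_\alpha(V)X_\alpha$ (rescaled so that $\|X_V\|_{C^2}\le 1$). This gives a continuous assignment $V\mapsto X_V\in\mathfrak X(S^3)$ with $\delta V(X_V)\le 0$ for every $V\in K$, with $X_V\equiv 0$ on $\mc A\cup\mc B$, and with $\delta V(X_V)\le -c_j<0$ on the compact set $A_j$. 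Let $\{\psi^V_t\}_{t\in\mb R}$ be the flow of $X_V$ on $S^3$. Each $\psi^V_t$ is a diffeomorphism of $S^3$, hence maps embedded tori to embedded tori, embedded spheres to embedded spheres, embedded circles to embedded circles, etc., and $t\mapsto\psi^V_t(\Sigma)$ is continuous in the generalized smooth topology; in particular $\psi^V_t$ maps $\oli{\mc T}$ into itself and preserves the topological type of every slice.

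Using the uniform $C^2$-bound on $X_V$ and the area bound $\mc H^2(\Sigma)\le C$, the quantity $\frac{d}{dt}\mc H^2(\psi^V_t(\Sigma))=\delta\big(|\psi^V_t(\Sigma)|\big)(X_V)$ varies slowly for $t$ near $0$, uniformly in $\Sigma$; hence there is a continuous time function $\tau:K\to[0,\infty)$, with $\tau\equiv 0$ on $\mc A\cup\mc B$, such that for every $\Sigma$ with $|\Sigma|=V\in K$ the function $t\mapsto\mc H^2(\psi^V_t(\Sigma))$ is non-increasing on $[0,\tau(V)]$, with a definite drop $\mc H^2(\psi^V_{\tau(V)}(\Sigma))\le\mc H^2(\Sigma)-\eta_j$ whenever $V\in A_j$ (the constant $\eta_j>0$ depending only on $j$), and with $\mathbf F\big(|\psi^V_{\tau(V)}(\Sigma)|,V\big)\to 0$ as the area drop $\mc H^2(\Sigma)-\mc H^2(\psi^V_{\tau(V)}(\Sigma))\to 0$. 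Now set $\Phi_i:=\Phi^*_i$ for the finitely many $i$ with $\sup_x\mc H^2(\Phi^*_i(x))>C$, and otherwise $\Phi_i(x):=\psi^{\,|\Phi^*_i(x)|}_{\tau(|\Phi^*_i(x)|)}(\Phi^*_i(x))$. Each $\Phi_i$ is continuous into $\oli{\mc T}$, and $(s,x)\mapsto\psi^{\,|\Phi^*_i(x)|}_{s\,\tau(|\Phi^*_i(x)|)}(\Phi^*_i(x))$ is a homotopy in $\oli{\mc T}$ from $\Phi^*_i$ to $\Phi_i$ that is constant on $Z$ (slices over $Z$ equal $\Phi_0|_Z$, so lie in $B$, hence in $\mc B$, where $\tau\equiv 0$); thus $\Phi_i\in\Pi$. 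Since the flow only decreases area, $L\le\mathbf L(\Phi_i)\le\mathbf L(\Phi^*_i)\to L$, so $\{\Phi_i\}$ is again a minimizing sequence in $\Pi$.

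Finally I would verify the two conclusions. Let $V\in\mathbf C(\{\Phi_i\})$ with a min-max subsequence $\Phi_{i_k}(x_k)$ such that $\mathbf F(|\Phi_{i_k}(x_k)|,V)\to 0$ and $\mc H^2(\Phi_{i_k}(x_k))\to L$. Then $\mc H^2(\Phi^*_{i_k}(x_k))$ is squeezed between $\mc H^2(\Phi_{i_k}(x_k))\to L$ and $\mathbf L(\Phi^*_{i_k})\to L$, so $\{\Phi^*_{i_k}(x_k)\}$ is a min-max subsequence of $\{\Phi^*_i\}$ and the area drop $d_k:=\mc H^2(\Phi^*_{i_k}(x_k))-\mc H^2(\Phi_{i_k}(x_k))\to 0$; by the last property of $\tau$ this forces $\mathbf F(|\Phi_{i_k}(x_k)|,|\Phi^*_{i_k}(x_k)|)\to 0$, so $|\Phi^*_{i_k}(x_k)|\to V$ and hence $V\in\mathbf C(\{\Phi^*_i\})$. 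Moreover, if such a $V$ were neither stationary nor in $B$, then $V\in A_j$ for some $j$, so $|\Phi^*_{i_k}(x_k)|\in A_{2j}$ for all large $k$, whence $d_k\ge\eta_{2j}>0$, contradicting $d_k\to 0$; therefore every $V\in\mathbf C(\{\Phi_i\})$ is stationary or lies in $B$. The step I expect to be the main obstacle is the simultaneous construction of the assignment $V\mapsto X_V$, the time function $\tau$, and the moduli $\eta_j$ so that area is non-increasing along the flow for \emph{every} slice (not only the bad ones), the area drop on each $A_j$ is bounded below uniformly in $i$, and a small area drop forces a small $\mathbf F$-displacement; this bookkeeping is routine but delicate and is exactly where the uniform $C^2$-bound on $X_V$, the compactness of $K$, and the $\mathbf F$-continuity of the first variation are used. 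By contrast, preservation of the topological type of the slices is automatic here, since the entire deformation is realized by ambient diffeomorphisms of $S^3$.
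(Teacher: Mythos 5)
Your proposal is correct and is precisely the argument the paper relies on: the paper itself gives no proof of this theorem, deferring to the pull-tight construction in \cite{wangzc2023existenceFour}*{Theorem 2.7} and \cite{Colding-DeLellis03}*{Proposition 4.1}, and your write-up reproduces that classical tightening argument with the one observation that matters for this setting, namely that the tightening is realized by ambient diffeomorphisms of $S^3$, so the deformed family remains in $\oli{\mc T}$ and the topological type of every slice is preserved.
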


\subsection{Interpolation theorem}
Recall that $d_H(\cdot,\cdot)$ is the Hausdorff distance between two sets.

\begin{proposition}\label{prop:interpolation to a fixed surface}
    Let $\Sigma$ be an embedded torus (or sphere) in $(S^3,g)$ and $\ell\in \mb N$. There exists $\eta_0=\eta_0(\Sigma, g)>0$ such that for every $0<\eta<\eta_0$, and every continuous map $\Phi: \partial I^\ell\to \omc T$ (resp. $\omc S$) with
\[    \mf F(\Phi(y),|\Sigma|)<\eta,
\]
there is an extension $\hat \Phi: I^\ell\to \omc T$ of $\Phi$ with 
\[
   \sup_{y\in I^\ell} \mf F(\hat \Phi(y),\Phi(y))  \leq \eta_0  .
\]
\end{proposition}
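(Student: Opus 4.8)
The plan is to build $\hat\Phi$ as a \emph{cone} over $\Phi$ with cone point $\Sigma$, after a preliminary homotopy that, through surfaces $\mf F$--close to $|\Sigma|$, moves the whole family $\Phi$ into a uniformly graphical position over $\Sigma$. At every stage one keeps all slices within $\mf F$--distance $C\eta_0$ of $|\Sigma|$; since by hypothesis $\Phi(\partial I^\ell)$ lies in the $\eta$--ball about $|\Sigma|$ with $\eta<\eta_0$, the triangle inequality
\[
  \mf F(\hat\Phi(y),\Phi(y))\ \le\ \mf F(\hat\Phi(y),|\Sigma|)+\mf F(|\Sigma|,\Phi(y))\ \le\ C\eta_0
\]
then yields the stated estimate after relabelling $\eta_0$. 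Conceptually this is the mechanism — filling a small metric ball about a fixed surface by coning — that underlies the Almgren--Pitts interpolation lemmas and \cite{Marques-Montezuma-Neves20}; the extra difficulty here is that every intermediate slice must be an embedded surface of the prescribed topological type, so the cone cannot be taken in a linear space of chains.

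I would first fix a tubular neighbourhood $\mc N\cong\Sigma\times(-2\delta_0,2\delta_0)$ of $\Sigma$ with Fermi coordinates $(x,s)$ and nearest--point projection $\pi\colon\mc N\to\Sigma$, write $\Graph(u)=\{(x,u(x)):x\in\Sigma\}$, and choose $\eta_0=\eta_0(\Sigma,g)<\tfrac1{10}\mc H^2(\Sigma)$ so small that any $\Graph(u)$ with $\|u\|_{C^1(\Sigma)}$ below a fixed threshold lies in $\mc N$, remains an embedded torus (resp. sphere), and satisfies $\mf F(|\Graph(u)|,|\Sigma|)\ll\eta_0$. \emph{Step 1 (graphical reduction).} Since $\mf F(\Phi(y),|\Sigma|)<\eta$ for all $y$, the varifolds $|\Phi(y)|$ are uniformly close to $|\Sigma|$ in mass and weakly; hence $\|\Phi(y)\|(S^3\setminus\mc N)\le C\eta$, and inside $\mc N$ each $\Phi(y)$ coincides, off a subset of $\Sigma$ of area $\le C\eta$, with a single--sheeted nearly horizontal normal graph over $\Sigma$. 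Using a cut--off of the inward normal flow (to pull the part of $\Phi(y)$ outside $\mc N$ down onto a small neighbourhood of $\Sigma$) together with a surgery replacing the non--graphical pieces over the small ``bad set'' by the graph they nearly are, I would produce a homotopy $H\colon\partial I^\ell\times[0,1]\to\omc T$, continuous in the generalized smooth topology, with $H(\cdot,0)=\Phi$, with $\mf F(H(\cdot,t),|\Sigma|)\le C\eta_0$ for all $t$, and with $\Phi':=H(\cdot,1)$ of the form $\Phi'(y)=\Graph(u_y)$ where $y\mapsto u_y\in C^1(\Sigma)$ is continuous and $\sup_y\|u_y\|_{C^1}$ is below the above threshold.

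\emph{Step 2 (coning).} Decompose $I^\ell$ as the outer collar $\partial I^\ell\times[\tfrac12,1]$ glued along $\partial(\tfrac12 I^\ell)\cong\partial I^\ell$ to the inner cube $\tfrac12 I^\ell$. On the collar I place a reparametrisation of $H$, so that $\hat\Phi$ restricts to $\Phi$ on $\partial I^\ell$ and to $\Phi'$ on $\partial(\tfrac12 I^\ell)$. On the inner cube I write $z\in\tfrac12 I^\ell\setminus\{0\}$ as $z=t\,\omega$ with $t=2\|z\|_\infty\in(0,1]$ and $\omega\in\partial I^\ell$, and set
\[
  \hat\Phi(t\,\omega):=\Graph(t\,u_\omega),\qquad \hat\Phi(0):=\Sigma .
\]
Because $\sup_\omega\|u_\omega\|_{C^1}$ is small, $t\,u_\omega\to 0$ in $C^1$ as $t\to 0$, so $\hat\Phi$ is continuous (at the cone point the convergence is even smooth), each $\Graph(t\,u_\omega)$ lies in $\mc N$ with area close to $\mc H^2(\Sigma)$, hence $\mf F(\hat\Phi(z),|\Sigma|)\ll\eta_0$; combined with the collar estimate from Step 1 this proves the proposition, and the sphere case is identical with $\Sigma$ a sphere throughout.

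The step I expect to be the main obstacle is Step 1. The hypothesis supplies only flat/varifold closeness, so a priori $\Phi(y)$ may carry thin spikes, folds, or other non--graphical features leaving the region over which $\Sigma$ is graphical, and these must be removed by a deformation that is at once (i) continuous in $y$ for the \emph{generalized} smooth topology — in particular compatible with its restriction that a convergent family of tori may develop at most one point of curvature concentration, so that necks are not allowed to pinch off uncontrollably — and (ii) essentially area--nonincreasing, so that every slice stays $\mf F$--close to $|\Sigma|$. Controlling the number, size, location, and embedding type of the non--graphical pieces uniformly over the compact family $\Phi(\partial I^\ell)$, and in particular merging or splitting them continuously as $y$ varies, is where the smallness of $\eta_0$ and the $\mf F$--bound are genuinely used, and is the heart of the argument.
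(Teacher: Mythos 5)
Your architecture --- homotope the boundary family to normal graphs over $\Sigma$ (Step~1), then cone the graph functions to zero (Step~2) --- takes a genuinely different route from the paper. The paper does not attempt a graphical reduction: it invokes Ketover--Liokumovich \cite{Liokomovich-Ketover23} for an extension $\Psi:I^\ell\to\mc T$ controlled only in \emph{Hausdorff} distance to $\Sigma$ (so with no a priori area bound), and then recovers the $\mf F$--estimate by a separate device: choose a conformal factor $\wti g=e^{2u}g$ with $|u|<\eps$ making $\Sigma$ strictly stable minimal, run Simon--Smith min-max for $\Psi$ rel $\partial I^\ell$ in the thin slab $\Sigma\times[-2\gamma,2\gamma]$, and use the genus bound to kill higher multiplicity, so the width equals $\mc H^2(\Sigma)$ and $\Psi$ can be pushed below $\mc H^2(\Sigma)$ plus the allowed error. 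Your Step~2 is clean and, if Step~1 were available, would produce the $\mf F$--bound automatically, bypassing the conformal-metric min-max argument; that would be a real simplification.

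The gap is Step~1, and it is the heart of the problem rather than a technicality. The graphical reduction you sketch is essentially the content of the Ketover--Liokumovich interpolation machinery that the paper cites instead of reproving; you describe it but do not prove it, and the obstacles you honestly flag are exactly what that lemma is engineered to handle. Concretely: (i) the surgery removing the ``bad'' pieces must preserve embeddedness and the prescribed topological type --- a handle or neck lying over the bad set cannot simply be excised without changing genus or disconnecting the surface; (ii) the deformation must be continuous for the generalized smooth topology, which tolerates at most one concentration point when tori degenerate to a sphere; and (iii) the bad sets and excisions must depend continuously on $y\in\partial I^\ell$ even as they merge, split, or move. None of this is granted by $\mf F$--smallness alone, since flat/varifold closeness controls area and Hausdorff distance but not $C^1$ graphicality. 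As written, the proposal substitutes an equivalent unproven interpolation statement for the one being proved. To make the cone construction rigorous you should invoke \cite{Liokomovich-Ketover23} for a Hausdorff-controlled homotopy of $\Phi$ to the constant map $\Sigma$ and extract the graphical tail of that homotopy (near its end it must pass through graphs by smooth convergence to $\Sigma$), or else adopt the paper's conformal-metric min-max step to convert Hausdorff control into the needed $\mf F$ control.
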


\begin{proof}
We refer to \cite{Liokomovich-Ketover23} for a detailed proof and one can check that the area varies slightly in a small neighborhood, which gives the upper bound for the $\mf F$ metric. Here we provide an alternate proof to show the area discrepancy when $\Sigma$ is a torus, which is based on the Simon-Smith min-max theory and \cite{Liokomovich-Ketover23}.

Suppose that $\Sigma$ is a Clifford torus in a round $S^3$. Then by Ketover-Liokumovich \cite{Liokomovich-Ketover23}*{Section 7}, there exists a homotopy $\psi:\partial I^\ell\times [0,1]\to \mc T$ of $\phi$  such that 
\begin{gather*} 
    \sup_{y\in\partial I^\ell} d_H(\psi (x,t) ,\Sigma)<\gamma \quad \text{ for all } (x,t)\in  \partial I^\ell,   \\
    \psi(\cdot,0)=\Phi(\cdot), \quad \psi(\cdot,1)=\Sigma.
\end{gather*}
Such a homotopy gives an extension $\Psi: I^\ell \to \mc T$ of $\Phi$ such that $\Psi|_{\partial I^\ell}=\Phi$. For a general tori, we consider a diffeomorphism of $S^3$ which maps $\Sigma$ to a Clifford torus. By taking the extension near the Clifford torus, one can obtain an extension $\Psi: I^\ell \to \mc T$ of $\Phi$ such that $\Psi|_{\partial I^\ell}=\Phi$.

Now we deform $\Psi$ to be a map which also has desired area upper bounds. By Ketover-Liokumovich \cite{Liokomovich-Ketover23}*{Section 7}, we can suppose that $d_H(\Psi(x),\Sigma)<\gamma$.

Since $X$ is compact, there exists $L>0$ such that 
\[  
    \mc H^2_g(\Psi(x)) <  L. 
\]
Now we consider the conformal metric $\wti g=e^{2u}g$ for some $u\in C^\infty(S^3)$. Then the second fundamental form of $\Sigma$ w.r.t. $\wti g$ is given by (Besse \cite{Bes87}*{Section 1.163})
\[   A_{\Sigma,\wti g}=A_{\Sigma,g}+g\cdot (\nabla u)^\perp,\]
where $(\nabla u)^\perp$ is the component of $\nabla u$ normal to $T_x\Sigma$. We can pick $u$ such that $\Sigma$ is a strictly stable minimal surface and $|u|<\eps$ for any given $\eps$. It follows that 
\[  e^{-2\eps} \mc H^2_{g}(S)< \mc H^2_{\wti g}(S)\leq e^{2\eps} \mc H^2_{g}(S).\]
Let $\gamma>0$ such that $\Sigma\times\{r\}$ has mean curvature vector pointing towards $\Sigma$ for $-2\gamma\leq r\leq 2\gamma$. By taking small $\eta$, we can also suppose that $\Psi$ has image lying in $\Sigma\times (-\gamma,\gamma)$.

Note that Simon-Smith min-max theory applies to $\Psi$ relative to $\Psi|_{\partial I^\ell}$ in the space $(\Sigma\times [2\gamma,2\gamma],\wti g)$ since the boundary has mean curvature vector pointing towards $\Sigma$. Then there are two possibilities: either $\Psi$ is homotopic a map $\Psi':X\to \mc T$ with 
\[    \mc H^2(\Psi'(x);\wti g) < \max\Big\{ \sup_{y\in \partial I^\ell} \mc H^2(\Psi(y);\wti g), \mc H^2(\Sigma;g) \Big\} \]
or such a family is associated with an embedded minimal surface $\Gamma$ with
\[  
    \quad   \mc H^2(\Gamma;g)< L;\quad  \mk g(\Gamma)\leq 1.   \]
Suppose that the first case happens. Then we can push $\Psi'$ has the desired area upper bound.
    
Now we consider the second case. Since $\Sigma$ is the unique minimal surface, then $\Gamma$ must be $m\Sigma$ for a positive integer multiplicity $m$. On the other hand, since $\Sigma$ is a torus, then the genus upper bound \cite{Ketover13} gives that $m=1$. This implies we can deform $\Psi$ to $\Psi'$ which has desired area upper bound. Then we get the desired extension.
\end{proof}

Then by a subdivision to the domain, one can inductively prove the following interpolation result; see \cite{MN18}*{Theorem 3.8}.
\begin{lemma}\label{lem: interplation result}
Let $\mc K$ be a compact set of smooth tori or spheres and $k\in \mb N$. For every $\alpha > 0,r>0$, there exists $0 < \beta < \alpha$ (depending on $\mc K$) such that for any cubical complex $X$ of dimension at most $k$ and any pair of continuous maps $\Phi, \Psi: X \rightarrow \oli{\mc T}$ in the smooth topology with $\Psi(X) \subset \mathbf{B}_{\beta}^{\mathbf{F}}(\mc K)$ and 
\[
\sup_{x \in X} \mathbf{F}(\Phi(x), \Psi(x)) < \beta, 
\]
we can find a homotopy $H:[0,1]\times X \rightarrow \oli{\mc T}$ with $H(0, \cdot) = \Phi$, $H(1, \cdot) = \Psi$, and such that
\[
    \mathbf{F}(H(t, x), \Psi(x)) < \alpha \quad \text{ for all } (t,x)\in [0,1]\times X.
\]
\end{lemma}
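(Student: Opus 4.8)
The plan is to prove Lemma~\ref{lem: interplation result} by reducing it, via a subdivision of the cubical complex $X$, to repeated applications of Proposition~\ref{prop:interpolation to a fixed surface}. The key observation is that Proposition~\ref{prop:interpolation to a fixed surface} already provides the ``single-cell'' interpolation: given a map on the boundary $\partial I^\ell$ that is $\mathbf F$-close to a fixed smooth surface $\Sigma$, one can fill it in over $I^\ell$ while staying $\mathbf F$-close to $\Sigma$. So the work of this lemma is purely combinatorial bookkeeping: choosing the mesh of a subdivision fine enough that the restriction of $\Psi$ (and hence of $\Phi$) to each cell lives near a single surface in the compact family $\mc K$, and then assembling the cell-by-cell fillings into a global homotopy with the claimed $\alpha$-control.

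First I would fix the compact family $\mc K$ of smooth tori or spheres and extract, using Proposition~\ref{prop:interpolation to a fixed surface} and a compactness argument, a uniform radius: there exists $\eta_0 = \eta_0(\mc K, g) > 0$, which we may take smaller than $\alpha/10$, such that for every $\Sigma \in \mc K$ and every $\ell \le k$ the conclusion of Proposition~\ref{prop:interpolation to a fixed surface} holds with that $\eta_0$. (Here one uses that $\mc K$ is compact to make $\eta_0$ independent of $\Sigma$; a priori $\eta_0$ depends on $\Sigma$, but the set of admissible $\eta_0$ is open, and one covers $\mc K$ by finitely many $\mathbf F$-balls.) Then I would choose $\beta < \eta_0$ small enough that, additionally, whenever $\Psi(X) \subset \mathbf B^{\mathbf F}_\beta(\mc K)$ a further subdivision of $X$ can be taken so fine that on each cell $\sigma$ of the subdivided complex, $\Psi(\sigma)$ lies in a single ball $\mathbf B^{\mathbf F}_{\eta_0/4}(\Sigma_\sigma)$ for some $\Sigma_\sigma \in \mc K$; this is possible because $\Psi$ is continuous on the compact complex $X$, hence uniformly continuous, so the modulus of continuity of $\Psi$ forces a uniform bound on the mesh. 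Since $\sup_x \mathbf F(\Phi(x), \Psi(x)) < \beta$, we also get $\Phi(\sigma) \subset \mathbf B^{\mathbf F}_{\eta_0/2}(\Sigma_\sigma)$.

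Next I would build the homotopy $H$ by induction on the skeleta of the subdivided complex $X^{(0)} \subset X^{(1)} \subset \cdots \subset X^{(k)} = X$. On the $0$-skeleton, for each vertex $v$ I define the path $H(\cdot, v)$ to be a short path in the $\mathbf F$-metric from $\Phi(v)$ to $\Psi(v)$ inside $\mathbf B^{\mathbf F}_{\eta_0}(\Sigma_v)$ — here one needs that $\omc T$ is locally path-connected in the $\mathbf F$-topology near a smooth surface, which again follows from Proposition~\ref{prop:interpolation to a fixed surface} applied with $\ell = 1$. Assuming $H$ has been defined on $[0,1] \times X^{(j-1)}$ with the $\mathbf F(H(t,x), \Psi(x)) < \alpha$ control, I extend across each $j$-cell $\sigma \cong I^j$ as follows: on $\partial([0,1]\times \sigma) = (\{0\}\times\sigma) \cup (\{1\}\times\sigma) \cup ([0,1]\times\partial\sigma)$ we already have a map into $\omc T$ (namely $\Phi$, $\Psi$, and the inductively-defined $H$ respectively) whose image lies in $\mathbf B^{\mathbf F}_{\eta_0}(\Sigma_\sigma)$ — shrinking constants as needed so that the error accumulated over the finitely many induction steps stays below $\alpha$ — and this boundary sphere $\partial I^{j+1}$ is exactly the input to Proposition~\ref{prop:interpolation to a fixed surface} with $\Sigma = \Sigma_\sigma$ and $\ell = j+1 \le k+1$. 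Applying the proposition gives the required fill-in over $[0,1]\times\sigma$ staying within $\mathbf F$-distance $\eta_0$ of $\Psi(x)$ on that cell, hence within $\alpha$ globally.

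The main obstacle I anticipate is the bookkeeping of the error constants across the inductive skeleton argument: each application of Proposition~\ref{prop:interpolation to a fixed surface} is controlled only by the crude bound $\eta_0$ (which in that proposition is not assumed comparable to the input $\eta$, merely to $\eta_0(\Sigma,g)$), so one must be careful that the triangle-inequality accumulation over the $k+1$ stages of the induction — and over the patching of adjacent cells — does not blow past $\alpha$. The clean way around this is to not accumulate at all: rather than extending skeleton-by-skeleton with a fixed $H$, one can directly apply Proposition~\ref{prop:interpolation to a fixed surface} to the whole product complex $[0,1]\times X$ viewed as a cubical complex, using the subdivision so that each top-dimensional cell maps near a single $\Sigma_\sigma \in \mc K$ and the boundary data ($\Phi$ at $t=0$, $\Psi$ at $t=1$, and a common value on $[0,1]\times Z$ if a subcomplex $Z$ is present) are prescribed; then the $\mathbf F$-control is governed cell-by-cell by the single constant $\eta_0$, never compounded. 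A secondary subtlety is the need for the interpolation in Proposition~\ref{prop:interpolation to a fixed surface} to respect the generalized smooth topology on $\omc T$ (so that fillings near a torus that degenerate to a sphere are admissible), but since that proposition is already stated in terms of maps into $\omc T$, this is inherited for free.
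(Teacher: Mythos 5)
Your main argument matches the paper's proof: use compactness of $\mc K$ to extract a uniform radius from Proposition~\ref{prop:interpolation to a fixed surface}, subdivide $X$ finely enough that each cell maps into a single ball $\mathbf B^{\mathbf F}_{\eta(T_i)}(T_i)$ around some $T_i \in \mc K$, then induct on the dimension of $X$ (equivalently, on skeleta), applying the proposition to fill in over each $[0,1]\times\mathfrak t$. One caveat: the ``clean way around'' you sketch at the end --- applying Proposition~\ref{prop:interpolation to a fixed surface} once directly to top-dimensional cells of $[0,1]\times X$ --- does not actually avoid the induction, since for a $k$-cell $\sigma$ the face $[0,1]\times\partial\sigma$ of $\partial([0,1]\times\sigma)$ is not prescribed by $\Phi$, $\Psi$, or any subcomplex; it must still be constructed from lower skeleta, so the skeleton-by-skeleton build (your primary argument, and the paper's) is the correct one.
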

\begin{proof}
The proof is similar to \cite{MN18}*{Proposition 3.6} and we sketch the idea here for completeness.

We proceed by an induction on $k$. When $k=0$, $X$ is a finite set. For every $T\in \mc K$, let $\eta(T)$ be the constant in Proposition \ref{prop:interpolation to a fixed surface} (where we used $\eta_0(T)$ therein) so that any map into $\mf B^\mf F_{\eta(T)}(T)$ is homotopic to the constant map to $T$. Since $\mc K$ is compact, there exists a finite cover $\{\mf B^\mf F_{\eta(T_i)}(T_i)\}$ of $\mc K$. Now set $\eta:=\min\{\eta(T_i)\}$. Then given any $x\in X$, there exists $T_i$ such that $\Phi(x),\Psi(x)\in \mf B^\mf F(T_i)$. By Proposition \ref{prop:interpolation to a fixed surface}, there exists a homotopy connecting $\Phi$ and $\Psi$. Clearly, this is the desired homotopy.

For $k\geq 1$, the inductive hypothesis gives a homotopy between $\Phi$ and $\Psi$ on the $(k-1)$ skeleton of $X$. Applying barycentric subdivisions to $X$, we can make sure for any $k$-simplex $\mk t$, $\Phi(x), \Psi(y)\in \mf B^\mf F_{\eta(T_i)}(T_i)$ for some $T_i$ whenever $x,y\in\mk t$. Then by Proposition \ref{prop:interpolation to a fixed surface}, one can extend the homotopy on each $[0,1]\times \mk t$. This finishes the proof of Lemma \ref{lem: interplation result}.
\end{proof}

\subsection{Deformation theorem}\label{sec: deformation theorem}
By a deformation theorem analogous to Deformation Theorem A in \cite{Marques-Neves16}, one can prove the Morse index upper bound in the Simon-Smith setting \cites{danielxin2015, Marques-Neves16}. To show the Morse index lower bound, we first need to demonstrate the existence of minimizing sequences such that every element of the critical set is almost smooth. This relies on a smooth version of deformation theorem analogous to Pitts' Combinatorial Deformation Theorem in \cite{Pi}. In the following, let $\mathcal{W}_{L}$ be the collection of all stationary integral varifolds in $S^3$ with mass equal to $L$ whose support is a closed smooth embedded minimal torus or minimal sphere. Denote $\mathcal{W}_{L, j}, \mathcal{W}^{j}_{L}$ the elements in $\mathcal{W}_{L}$ whose support has Morse index less than or equal to $j$ and bigger than or equal to $j$ respectively. 

In \cite{wangzc2023existenceFour}*{Theorem 7.3, Step 1}, the second author together with X. Zhou proved that for a homotopy class $\{\Phi_i\}\subset \Pi$, by doing an extra tightening process to find another minimizing sequence, still denoted as $\{\Phi_i\}$, such that for $i$ sufficiently large, either $\Phi_i(x)$ is close to a smooth min-max minimal surface, or the area $\mc H^2 (\Phi_i(x))$ is strictly less than ${\mf L}(\Pi)$. 
\begin{theorem}[\cite{wangzc2023existenceFour}*{Theorem 7.3, Step 1}]\label{thm: Deform away from nonsmooth critical varifolds}
Suppose $g$ is a bumpy metric on $S^3$. Consider $\Pi$ a homotopy class of $k$-sweepouts by elements in $\oli{\mc T}$ and a minimizing sequence $\{\Psi_i\} \subset \Pi$ such that 
\[
\mathbf{C}(\{\Psi_i\}) \cap \mathcal{W}^{k + 1}_{L} = \emptyset. 
\]
Let $\Lambda = \mathbf{C}(\{\Psi_i\}) \cap \mathcal{W}_{L} \subset \mathcal{W}_{L, k}$. Given any $h > 0$, there exists another minimizing sequence $\{\hat{\Psi}_{j}\} \subset \Pi$ such that $\mathbf{C}(\{\hat{\Psi}_i\}) \cap \mathcal{W}^{k + 1}_{L} = \emptyset$ and for some $\xi \in (0, L)$ and $i$ large enough, 
\[
\{|\hat{\Psi}_{i}(x)|: \mc H^2 (\hat{\Psi}_{i}(x)) \geq L - \xi\} \subset \bigcup_{\Sigma \in \Lambda} \mathbf{B}^{\mathbf{F}}_{h}(\Sigma).  
\]
\end{theorem}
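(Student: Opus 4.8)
\emph{Proof strategy.} The plan is to prove this as a Pitts-type combinatorial deformation theorem in the Simon--Smith setting, following the scheme of \cite{MN18} and \cite{Pi}*{\S 3--4}, with the regularity input supplied by Simon--Smith regularity together with the genus bound of \cite{Ketover13} and the multiplicity one theorem of \cite{wangzc2023existenceFour}. First I would apply the Pull-tight Theorem, which only shrinks the critical set (so the hypothesis $\mathbf C(\{\Psi_i\})\cap\mathcal W^{k+1}_L=\emptyset$ is preserved and a conclusion for the new sequence implies it for the old one), to arrange that every $V\in\mathbf C(\{\Psi_i\})$ is stationary or lies in $B=\Phi_0(Z)$; since every such $V$ has mass exactly $L$ and $L$ exceeds $\sup_{x\in Z}\mc H^2(\Phi_0(x))$ in our setting, in fact every $V\in\mathbf C(\{\Psi_i\})$ is stationary. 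The next step is a dichotomy for the critical set: if $V\in\mathbf C(\{\Psi_i\})$ is stationary \emph{and} almost minimizing in small annuli, then by Simon--Smith regularity $V$ is a smooth closed embedded minimal surface of mass $L$, the genus bound forces genus $\le 1$ and the multiplicity one theorem forces multiplicity one, so $V\in\mathcal W_L$; since $V\in\mathbf C(\{\Psi_i\})$ this gives $V\in\Lambda$, and the hypothesis then forces $\Lambda\subset\mathcal W_{L,k}$. Because $g$ is bumpy, minimal tori and spheres of area at most $L$ are non-degenerate, hence isolated and, by compactness under area and genus bounds, finite in number; so $\Lambda$ is a finite set of varifolds. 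Fixing $h>0$, I would set $\mathcal C^{\mathrm{bad}}:=\mathbf C(\{\Psi_i\})\setminus\bigcup_{\Sigma\in\Lambda}\mathbf B^{\mathbf F}_{h}(\Sigma)$, a compact subset of $\mathcal V_2(M)$ every element of which is stationary but \emph{not} almost minimizing in small annuli.

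The heart of the argument is the construction of the global area-decreasing deformation. For each $V\in\mathcal C^{\mathrm{bad}}$, the failure of the almost minimizing property produces, as in \cite{Pi}*{\S 3--4} and \cite{Colding-DeLellis03}*{\S 7}, a concentric pair of small annuli and a finite family of isotopies supported in them which strictly decrease the area of any surface sufficiently $\mathbf F$-close to $V$ by a definite amount $\varepsilon(V)>0$, continuously in parameters. By compactness of $\mathcal C^{\mathrm{bad}}$ finitely many triples $(V_\ell,r_\ell,\varepsilon_\ell)$ with $r_\ell<h/3$ suffice, so that $\mathbf F(|\Sigma|,V_\ell)<r_\ell$ forces the local deformation at $V_\ell$ to decrease $\mc H^2(\Sigma)$ by at least $\varepsilon_\ell$. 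I would then glue these into a deformation of $\Psi_i$ over the cubical complex $X$ (for $i$ large): choose a partition of unity on $X$ subordinate to the open cover $\{x:\mathbf F(|\Psi_i(x)|,V_\ell)<r_\ell\}_\ell$, apply at each $x$ the local isotopies at the relevant $V_\ell$ with parameters weighted by the cutoffs, with a combinatorial argument in the style of \cite{Pi} handling the overlaps, and re-glue into a continuous family using Lemma~\ref{lem: interplation result}. In the Simon--Smith setting these deformations are realized by ambient isotopies and the interpolations of \cite{Liokomovich-Ketover23}, so embeddedness and the torus/sphere type are preserved and the genus bound keeps the image inside $\oli{\mc T}$. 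The resulting family $\{\hat\Psi_i\}$ is homotopic to $\{\Psi_i\}$ rel $Z$, hence lies in $\Pi$; it satisfies $\mc H^2(\hat\Psi_i(x))\le\mc H^2(\Psi_i(x))$ for all $x$, agrees with $\Psi_i$ on slices whose image is $\mathbf F$-close to $\Lambda$, and obeys $\mc H^2(\hat\Psi_i(x))\le\mc H^2(\Psi_i(x))-\varepsilon_0$ for a fixed $\varepsilon_0>0$ whenever $|\Psi_i(x)|$ is $\mathbf F$-close to $\mathcal C^{\mathrm{bad}}$. Since area is never increased, $\{\hat\Psi_i\}$ is again minimizing, and every min-max subsequence of $\{\hat\Psi_i\}$ has its near-maximal slices away from $\mathcal C^{\mathrm{bad}}$, hence equal to the corresponding slices of $\Psi_i$; this forces $\mathbf C(\{\hat\Psi_i\})\subseteq\mathbf C(\{\Psi_i\})$, so in particular $\mathbf C(\{\hat\Psi_i\})\cap\mathcal W^{k+1}_L=\emptyset$.

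It remains to extract $\xi$. By compactness and the definition of $\mathbf C(\{\Psi_i\})$ there is $\delta>0$ such that, for $i$ large, every slice $x$ with $\mathbf F(|\Psi_i(x)|,\mathbf C(\{\Psi_i\}))\ge\min_\ell r_\ell$ satisfies $\mc H^2(\Psi_i(x))\le L-\delta$ (otherwise a subsequence of such slices would converge to a critical varifold at positive $\mathbf F$-distance from $\mathbf C(\{\Psi_i\})$). I set $\xi:=\tfrac12\min\{\varepsilon_0,\delta\}$. For $i$ large, if $\mc H^2(\hat\Psi_i(x))\ge L-\xi$, then $|\Psi_i(x)|$ cannot be $\mathbf F$-close to $\mathcal C^{\mathrm{bad}}$ (else its area dropped below $L-\varepsilon_0<L-\xi$) and cannot be $\mathbf F$-far from $\mathbf C(\{\Psi_i\})$ (else its area was already below $L-\delta<L-\xi$ and left unchanged); hence $|\Psi_i(x)|$, and therefore $|\hat\Psi_i(x)|$ which equals it on such a slice, lies in $\bigcup_{\Sigma\in\Lambda}\mathbf B^{\mathbf F}_h(\Sigma)$, as required. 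I expect the main obstacle to be the second step: carrying out the continuous gluing of the finitely many local Pitts deformations over $X$ — the genuinely combinatorial part — while keeping the family in the homotopy class $\Pi$ and controlled in the $\mathbf F$-metric, and at the same time ensuring every intermediate surface is an embedded torus or sphere, for which Lemma~\ref{lem: interplation result} and the genus upper bound of \cite{Ketover13} are the essential tools.
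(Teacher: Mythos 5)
The paper does not actually prove this statement --- it is imported verbatim as a citation to \cite{wangzc2023existenceFour}*{Theorem 7.3, Step 1}, so there is no ``paper's own proof'' here to compare against. That said, your reconstruction does follow the same Pitts--Marques--Neves scheme that the cited paper (and indeed the rest of Section~\ref{sec: preliminaries} of this paper, e.g.\ the sketch of Theorem~\ref{thm: deform the sweepout if all its values are close to a minimal cycle and a different current}) operates within: pull-tight to get stationarity, identify the bad stationary varifolds as those failing the almost-minimizing-in-annuli condition, deform near each such varifold by local area-decreasing isotopies supported in concentric annuli, observe that the new critical set shrinks into the original one, and extract the threshold $\xi$ from compactness. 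Your logical chain ``stationary $+$ almost minimizing $\Rightarrow$ smooth by Simon--Smith regularity, hence in $\mathcal W_L\cap\mathbf C=\Lambda$'' and the contrapositive for $\mathcal C^{\mathrm{bad}}$ is correct, and so is the observation that the hypothesis $\mathbf C(\{\Psi_i\})\cap\mathcal W^{k+1}_L=\emptyset$ survives because $\mathbf C(\{\hat\Psi_i\})\subset\mathbf C(\{\Psi_i\})$.

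The genuine gap is in the gluing step, which you flag as the main obstacle but describe with the wrong mechanism. A partition of unity on $X$ that ``weights'' the local isotopies by cutoffs does not by itself give an area-nonincreasing family: on the overlap of two balls $\mathbf B^{\mathbf F}_{r_\ell}(V_\ell)$ and $\mathbf B^{\mathbf F}_{r_m}(V_m)$, a convex combination of two isotopies (or a ``partial-time'' application of each) need not decrease area at all, because neither isotopy was constructed to be compatible with the other, and the intermediate surfaces may leave the region where the area-drop estimate holds. What actually closes this in \cite{Pi} and \cite{MN18} --- and in \cite{wangzc2023existenceFour}*{Theorem 7.3, Lemma 3.10} which this paper leans on for its Theorem~\ref{thm: deform the sweepout if all its values are close to a minimal cycle and a different current} --- is a discrete combinatorial scheme: one takes a fine cubical refinement of $X$, assigns to each vertex a finite collection of disjoint annuli chosen so that adjacent vertices share enough annuli, applies the local $(\varepsilon,\delta)$-deformations sequentially in these annuli so that at every stage exactly one deformation is active and areas only decrease, and then passes from the discrete family back to a continuous one using Lemma~\ref{lem: interplation result}. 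Your write-up gestures at this (``a combinatorial argument in the style of \cite{Pi}'') but the partition-of-unity framing in front of it would, as written, fail; the sequential freezing argument is not an implementation detail but the entire content of the deformation theorem. The extraction of $\xi$ at the end also needs the radii $r_\ell$ to be calibrated against $h$ so that ``$\mathbf F$-close to $\mathbf C$ but not close to $\mathcal C^{\mathrm{bad}}$'' actually lands inside $\bigcup_{\Sigma\in\Lambda}\mathbf B^{\mathbf F}_h(\Sigma)$; this is fixable by taking the covering radii less than $h/2$, but it should be said.
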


The next theorem concerns sequences of mod $2$ flat cycles whose flat limit is different from the varifold limit.  

\begin{theorem}[\cite{MN18}*{Theorem 4.11}]\label{thm: deform the sweepout if all its values are close to a minimal cycle and a different current} 
Let 
\[
\Sigma = |\Sigma_1| + \cdots + |\Sigma_N| 
\]
be an embedded minimal varifold where $\{\Sigma_1, \ldots, \Sigma_N\}$ is a disjoint collection, and   
\[
T = n_1 \cdot \Sigma_1 + \cdots n_{N} \cdot \Sigma_{N}, \quad n_i \in \{0, 1\}
\]
be a mod $2$ flat cycle with $n_j \neq 1$ mod $2$ for some $1 \leq i \leq N$. 

Given $\Psi: X \rightarrow \oli{\mc T}$ a continuous map in the smooth topology, where $X \subset I(l, j)$ is a subcomplex, there exists $\varepsilon = \varepsilon(l, \Sigma, T) > 0$ such that if the following holds for every $x \in X$
\[
\mathbf{F}(|\Psi(x)|, \Sigma) \leq \varepsilon \text{ and } \mathcal{F}(\Psi(x), T) \leq \varepsilon, 
\]
then for every $\delta > 0$ we can find a homotopy $H: [0, 1] \times X \rightarrow \oli{\mc T}$ satisfying 
\begin{enumerate}
    \item $H(0, \cdot) = \Psi$;
    \item $H(1, \cdot) = \Psi'$;
    \item $\mathcal{H}^2(H(t, x)) \leq \mathcal{H}^2(\Psi(x)) + \delta$ for every $(t, x) \in [0, 1] \times X$;
    \item $\mathcal{H}^2(\Psi'(x)) \leq \mathcal{H}^2(\Psi(x)) - \varepsilon/6$ for every $x \in X$.
\end{enumerate} 
\end{theorem}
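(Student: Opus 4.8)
The plan is to transcribe the proof of \cite{MN18}*{Theorem 4.11}, with the space $\mathbf I_2(M;\mathbb Z_2)$ of mod $2$ flat chains replaced by $\oli{\mc T}$, checking at each stage that the cut-and-paste operations there yield embedded tori or spheres and can be performed continuously over $X$. I record the main steps and the point where the Simon--Smith setting requires extra care.

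\textbf{Step 1 (localization at a defective component).} After relabelling, fix an index $j$ with $n_j=0$, and pick $\gamma>0$ so small that the neighbourhoods $N_i:=\{\dist(\cdot,\Sigma_i)<2\gamma\}$ are pairwise disjoint product regions $\Sigma_i\times(-2\gamma,2\gamma)$ on which every parallel copy $\Sigma_i\times\{s\}$ has area within $\gamma$ of $\mc H^2(\Sigma_i)$. For $\eps=\eps(l,\Sigma,T)$ small, the varifold bound $\mathbf F(|\Psi(x)|,\Sigma)\le\eps$ forces essentially all of $\|\Psi(x)\|$ to sit in $\bigcup_iN_i$, with $\Psi(x)\cap N_i$ a nearly parallel multiplicity-one sheet of area close to $\mc H^2(\Sigma_i)$. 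Writing $\Psi(x)=\partial R_x$ and $T=\partial R_T$ mod $2$ for Caccioppoli sets, the flat bound $\mathcal F(\Psi(x),T)\le\eps$ gives $\mathrm{Vol}(R_x\triangle R_T)\le C\eps$; since $n_j=0$, the set $R_T$ meets $N_j$ in a full product subregion or not at all, so $R_x$ differs from that region inside $N_j$ by a set of small volume. In combination: the sheet $\Psi(x)\cap N_j$, together with a controlled piece of $\partial N_j$, cobounds a region of small volume — the rigid local picture that \cite{MN18} exploits.

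\textbf{Step 2 (area-decreasing homotopy).} I would then, exactly as in \cite{MN18}*{Theorem 4.11}, sweep the sheet $\Psi(x)\cap N_i$ across the thin region it cobounds and absorb it into $\partial N_i$, for each $i$ with $n_i=0$, leaving $\Psi(x)$ unchanged elsewhere; the elementary isotopies and caps involved are supported in $\bigcup_{n_i=0}N_i$, are applied in many small steps, and can be realized by a fixed smooth family of embeddings of $S^3$. This keeps $H(t,x)\in\oli{\mc T}$ for all $t$ — an ambient isotopy preserves embeddedness and topological type — which is (1) and (2); by making the steps fine, the transient area increase is at most any prescribed $\delta$, which is (3); and at the end the sheet near $\Sigma_j$ is gone, so $\mc H^2(\Psi'(x))\le\mc H^2(\Psi(x))-\mc H^2(\Sigma_j)+C\eps\le\mc H^2(\Psi(x))-\eps/6$ once $\eps$ is small relative to $\mc H^2(\Sigma_j)$, which is (4).

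\textbf{Main obstacle.} Two points need care beyond citing \cite{MN18}. First, the description of $\Psi(x)\cap N_j$ in Step 1 as an honest normal graph holds only off a set of small area; for the exceptional $x$ where this piece carries a small fold or thin neck one must first homotope $\Psi(x)$ to a nearby graph over $\Sigma_j$, continuously in $x$, within $\oli{\mc T}$, and at the cost of at most $\delta/2$ in area — this is where the interpolation and cut-and-paste machinery for surfaces of bounded genus enters (compare Lemma~\ref{lem: interplation result} and \cites{Liokomovich-Ketover23,wangzc2023existenceFour}). Second, all of the above must be carried out compatibly with the cubical structure of $X$, by the usual subdivision-and-induction argument on skeleta; this is the crux, and is the only place where the Simon--Smith setting differs in substance from \cite{MN18}.
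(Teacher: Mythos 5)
Your proposal tracks the spirit of \cite{MN18} and you are right that the final assembly over $X$ requires a subdivision-and-skeleta argument, but the concrete route you take in Steps 1--2 diverges from the paper's and, as written, has two genuine gaps.

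First, the \emph{route}. The paper does not perform a global sweep across a whole tubular neighbourhood $N_j$. Instead, it chooses a small radius $r>0$, picks a finite collection of nearby points $p_1,\dots,p_k\in\Sigma_i$ (on the defective component), and invokes \cite{MN18}*{Proposition 4.10} --- with the regularity input of \cite{Colding-DeLellis03}*{Theorem 7.1} in place of Pitts' Section 3.10 --- to show that each $\Psi(x)$ admits an $(\varepsilon,\delta)$-deformation supported in each tiny ball $B_r(p_j)$. These local isotopies are then assembled into the global homotopy $H$ by a Pitts-style combinatorial deformation, here cited from \cite{wangzc2023existenceFour}*{Lemma 3.10} (see also \cite{Colding-Gabai_Ketover18}*{Appendix}). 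The entire construction lives at the scale of tiny balls, not at the scale $\gamma$ of the product neighbourhood.

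Second, the \emph{gap in Step 1}. You conclude that the sheet $\Psi(x)\cap N_j$ and a piece of $\partial N_j$ cobound a region of volume $O(\varepsilon)$. That cannot be right: the varifold bound forces the sheet to sit near $\Sigma_j\times\{0\}$, while $\partial N_j\approx\Sigma_j\times\{\pm 2\gamma\}$, so any region between them has volume on the order of $\gamma\cdot\mc H^2(\Sigma_j)$, which is $O(\gamma)$ but not $O(\varepsilon)$ since $\gamma$ is fixed before $\varepsilon$. The ``thin slab'' picture that \cite{MN18} exploits is a pair of nearly parallel sheets (coming from a multiplicity $m_j\geq 2$ in the varifold) cobounding a region of genuinely small volume; in the multiplicity-one case the two hypotheses $\mathbf F(|\Psi(x)|,\Sigma)\leq\varepsilon$ and $\mathcal F(\Psi(x),T)\leq\varepsilon$ with $n_j=0$ are in fact incompatible for $\varepsilon$ small, so there is no small region to be found at all.

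Third, the \emph{gap in Step 2}. ``Absorbing the sheet into $\partial N_j$'' is a subtract-a-boundary move that is legal for mod $2$ flat chains but is not an ambient isotopy and does not produce an element of $\oli{\mc T}$; it would merge $\Psi(x)$ with part of $\partial N_j$ and change the topology of the surface. The paper's small-ball $(\varepsilon,\delta)$-deformations, by contrast, are honest isotopies supported in $B_r(p_j)$: in a tiny ball the local picture is two nearly parallel flat disks, pushing one onto the other is an isotopy of embedded surfaces, and the area saved is of definite size. This is exactly what the Simon--Smith setting needs, and it is what the citation to \cite{MN18}*{Proposition 4.10} together with the combinatorial lemma supplies.
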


\begin{proof}
    As the proof shares the same spirit as that of \cite{MN18}*{Theorem 4.11}, we only sketch it here. Let $r > 0$ be a small number. Applying barycentric subdivisions to $X$, we assume that 
    \[
    |\mathcal{H}^2(\Psi(x)) - \mathcal{H}^2(\Psi(y))| \leq \delta  
    \]
    whenever $x, y$ live in a common cell of $X$. Choose $p \in \Sigma_i$ and $\{p_1, \ldots, p_k\} \subset \Sigma_i$ such that $|p_j - p|$ is sufficiently small. We claim that there exists $\varepsilon = \varepsilon(l, \Sigma, T) > 0$ so that for every $x \in X$, $\Psi(x)$ admits an $(\varepsilon, \delta)$-deformation in $B_{r}(p_j)$ for all $j = 1, \ldots, k$, i.e. there exists an isotopy $\phi \in \Is(B_{r}(p_j))$ such that 
    \begin{itemize}
        \item $\mathcal{H}^2(\phi(t,\Psi(x))) \leq \mathcal{H}^2(\Psi(x)) + \delta \quad \forall t \in [0, 1]$;
        \item $\mathcal{H}^2(\phi(1, \Psi(x))) \leq \mathcal{H}^2(\Psi(x)) - \varepsilon$.
    \end{itemize}
    Indeed, since the varifold limit of an almost minimizing sequence of embedded surfaces has the smooth regularity by the Simon-Smith min-max theory, the proof of \cite{MN18}*{Proposition 4.10} would carry over with our $\Psi(x) \in \oli{\mc T}$ by using \cite{Colding-DeLellis03}*{Theorem 7.1} in place of \cite{Pi}*{Section 3.10}. 

    Now, following the proof of \cite{wangzc2023existenceFour}*{Lemma 3.10} (see also \cite{Colding-Gabai_Ketover18}*{Appendix}), we obtain the desired homotopy $H$. 
\end{proof}

\section{Relative homology min-max theory}\label{sec: relative homology min-max theory}

\subsection{Index lower bound for Simon-Smith homology min-max theory}
By Marques-Neves \cite{Marques-Neves16}*{\S 1.3}, the minimal surfaces produced by Simon-Smith min-max theory have Morse index upper bound; see also \cite{danielxin2015}. In this subsection, we establish the Morse index lower bound (Theorem \ref{thm: index lower bound}) in the Simon-Smith setting, which relies on a combination of White's Local Min-max Theorem \cite{MN18}*{Theorem 6.1} and Theorem \ref{thm: Deform away from nonsmooth critical varifolds}.   

We recall the following property for generic metrics.    
\begin{proposition}[\cite{MN18}*{Proposition 8.5}]\label{prop: a choice of generic metric}
    For a $C^\infty$-generic positive Ricci metric $g$ on $M^3$, we have: 
    \begin{enumerate}
        \item every closed $g$-minimal surface is non-degenerate and unstable; 
        \item and if
    \[
    p_1 \cdot \mc H^2_g(\Sigma_1) + \cdots + p_{r} \cdot \mc H^2_g(\Sigma_r) = 0,
    \] 
    with $\{p_1, \ldots, p_r\} \subset \mathbb{Z}$, $\{\Sigma_1, \ldots, \Sigma_r\}$ a collection of connected $g$-minimal surfaces, and $\Sigma_k \neq \Sigma_l$ whenever $k \neq l$, then
    \[
    p_1 = \cdots = p_r = 0. 
    \]
    \end{enumerate}
\end{proposition}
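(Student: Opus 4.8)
The plan is to prove the two conclusions separately: item (1) is essentially White's bumpy metric theorem together with a one-line second variation computation, whereas item (2) is a transversality argument in the space of metrics, organized by Baire category and a countable exhaustion.

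For item (1): by White's bumpy metric theorem \cite{Whi91}, the set $\mc B$ of bumpy metrics on $M^3$ --- those for which no closed immersed minimal surface carries a nontrivial Jacobi field --- is $C^\infty$-comeager among all smooth metrics; since the set $\mc R$ of positive Ricci metrics is $C^\infty$-open, $\mc B\cap\mc R$ is comeager in $\mc R$ and every closed minimal surface of a metric in $\mc B\cap\mc R$ is non-degenerate. Instability requires no genericity and holds for all of $\mc R$: for a two-sided closed minimal surface $\Sigma$ with unit normal $N$ the second variation form is $Q(f,f)=\int_\Sigma\big(|\nabla f|^2-(|A|^2+\Ric(N,N))f^2\big)\,d\mc H^2$, and the test function $f\equiv 1$ gives $Q(1,1)=-\int_\Sigma(|A|^2+\Ric(N,N))\,d\mc H^2<0$ because $\Ric>0$, so $\Index(\Sigma)\geq 1$; a one-sided minimal surface reduces to this case via its orientation double cover (in $S^3$ every embedded surface is already two-sided).

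For item (2) I would argue by Baire category. For a bumpy metric every closed minimal surface is isolated, so there are at most countably many, and --- in our setting of embedded tori and spheres in $S^3$, where the genus is at most one --- the compactness theory of minimal surfaces shows that for each $\Lambda>0$ only finitely many connected closed minimal surfaces $\Gamma_1(g),\dots,\Gamma_m(g)$ have area $<\Lambda$. Writing $a_i(g)=\mc H^2_g(\Gamma_i(g))$, it suffices to show that for each $\Lambda\in\mb N$ the set $\mc G_\Lambda\subset\mc B\cap\mc R$ on which $(a_1(g),\dots,a_m(g))$ satisfies no nontrivial integer relation is comeager, for then $\bigcap_\Lambda\mc G_\Lambda$ is comeager and any relation as in the statement lives below some integer. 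The analytic heart is the first variation of these areas along a metric deformation $g_s=g+sh$: since each $\Gamma_i$ is $g$-minimal, the motion of $\Gamma_i$ contributes nothing to first order, so
\[
\left.\frac{d}{ds}\right|_{s=0}\mc H^2_{g_s}\big(\Gamma_i(g_s)\big)=\tfrac12\int_{\Gamma_i}\mr{tr}_{g|_{\Gamma_i}}(h)\,d\mc H^2_g .
\]
By unique continuation, two distinct connected minimal surfaces cannot coincide on an open set, so $\Gamma_i\setminus\bigcup_{j\neq i}\Gamma_j$ is a nonempty open subset of $\Gamma_i$; choosing a point there and a bump function $\rho_i\geq0$ supported in a small ball around it, disjoint from the other $\Gamma_j$ and positive somewhere on $\Gamma_i$, the deformation $h=\rho_i\,g$ moves $a_i$ (to first order by a positive amount) while fixing every other $a_j$. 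Hence $g'\mapsto(a_1(g'),\dots,a_m(g'))$ is a submersion onto $\mb R^m$ at $g$, so it restricts to a local diffeomorphism on a finite-dimensional slice through $g$; since the vectors of $\mb R^m$ admitting a nontrivial integer relation form a countable union of hyperplanes --- meager and Lebesgue-null --- their preimage on the slice is meager, which shows $\mc B\cap\mc R\setminus\mc G_\Lambda$ is nowhere dense near $g$. Ranging over all $g$ and using separability, $\mc G_\Lambda$ is comeager.

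The step I expect to be the main obstacle is not the variational computation, which is short, but the compactness and continuity bookkeeping underpinning it: one must show that under a small perturbation of the metric no new minimal surface of area $<\Lambda$ appears and no two of the $\Gamma_i$ degenerate into one another, so that $m$ and the labeling $\Gamma_i(\cdot)$ are locally constant, and one must handle the borderline area value $\Lambda$ (for instance by choosing it outside a null set). This is precisely where the compactness theory of minimal surfaces in $3$-manifolds is needed, and it is exactly what is carried out in \cite{MN18}*{Proposition 8.5}, whose proof I would follow.
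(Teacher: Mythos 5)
The paper gives no proof of this statement; it is quoted from Marques--Neves \cite{MN18}*{Prop.\ 8.5} (with the positive-Ricci hypothesis and the word ``unstable'' added), and your reconstruction follows the same route as that reference: White's bumpy metric theorem for nondegeneracy, the standard second-variation computation with $f\equiv 1$ for instability under $\Ric>0$ (correctly noting two-sidedness in $S^3$), and a Baire-category transversality argument for $\mathbb Q$-independence of areas, where conformal bumps supported away from the other surfaces (using unique continuation) supply the submersion onto $\mathbb R^m$.

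One genuine gap to flag: finiteness of the collection $\Gamma_1(g),\dots,\Gamma_m(g)$ of connected minimal surfaces with area $<\Lambda$ does not follow from the area bound alone. You insert a genus-$\leq 1$ hypothesis (so that Choi--Schoen compactness in positive Ricci applies, and then bumpiness forces isolation and hence finiteness); that suffices for every application of the proposition in this paper, but it proves only a restricted version of the statement as written, which speaks of \emph{all} closed $g$-minimal surfaces. The argument in \cite{MN18} handles the general case by exhausting over both area and Morse index and invoking Sharp's compactness, rather than a genus bound. This, together with the open-and-dense bookkeeping you explicitly deferred, is the only place your proposal departs from a fully self-contained proof.
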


Recall the definitions of $\mathcal{W}_{L}$, $\mathcal{W}_{L, j}$, and $\mathcal{W}^j_{L}$ in Section \ref{sec: deformation theorem}. In the next result, we prove the index lower bound for Simon-Smith homology min-max theory.   

\begin{theorem}\label{thm: index lower bound} 
Suppose $g$ is a generic metric on $S^3$ with positive Ricci curvature as in Proposition \ref{prop: a choice of generic metric}. Let $\tau\in H_k(\oli{\mc T};\mb Z_2)$ be a nontrivial homology class, and let 
\[  W(\tau) = \inf_{[\sum \mk s_i] = \tau} \sup_{i, x \in \Delta^k} \mathcal{H}^2(\mk s_i(x)). \]
Then there exists a two-sided, embedded, minimal torus or minimal sphere $\Sigma$ with 
\[  \mc H^2(\Sigma)=W(\tau), \quad  \mr {index}=k.\]
\end{theorem}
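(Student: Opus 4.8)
The plan is to follow the Marques--Neves strategy for the index lower bound (\cite{MN18}*{Theorem 6.1 and its application in Section 6}), adapted to the Simon--Smith setting, using White's Local Min-max Theorem together with Theorem \ref{thm: Deform away from nonsmooth critical varifolds} and Theorem \ref{thm: deform the sweepout if all its values are close to a minimal cycle and a different current}. First, since $g$ has positive Ricci curvature, the pull-tight theorem and the index upper bound in the Simon--Smith setting (\cite{Marques-Neves16}*{\S 1.3}, \cite{danielxin2015}) give a minimizing sequence $\{\Phi_i\}$ for the class $\tau$ whose critical set consists of stationary varifolds supported on smooth embedded minimal tori or spheres of mass $W(\tau)$ and index at most $k$; positive Ricci rules out stable components, so there are no boundary or lower-dimensional pieces to worry about. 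Since $g$ is generic as in Proposition \ref{prop: a choice of generic metric}, every such minimal surface is nondegenerate, so the critical set is a finite collection $\Lambda$ of multiplicity-one smooth minimal surfaces $\{\Sigma_1,\dots,\Sigma_p\}$ (multiplicity one because in the homology min-max setting one is working with cycles mod $2$ and, under the genericity hypothesis (2), mass $W(\tau)$ forces a single surface).

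Next I would argue by contradiction: suppose $\mathrm{index}(\Sigma_j)\le k-1$ for every $j$. Apply Theorem \ref{thm: Deform away from nonsmooth critical varifolds} with this index bound (so $\mathcal W_L^{k}=\emptyset$ along the relevant scale) to obtain a new minimizing sequence $\{\hat\Psi_i\}$ whose high-area slices are $\mathbf F$-close to $\Lambda$. Then one invokes White's Local Min-max Theorem (\cite{MN18}*{Theorem 6.1}): near each nondegenerate $\Sigma_j$ of index $\le k-1$, any continuous $k$-parameter family that is $\mathbf F$-close to $\Sigma_j$ can be homotoped, without increasing area beyond $\mathcal H^2(\Sigma_j)+\delta$ and strictly decreasing the max area on the part of the domain mapping near $\Sigma_j$ (using the interpolation Lemma \ref{lem: interplation result} and the two homotopies of \cite{MN18}*{Section 6.1} to first make the family ``intrinsically generated'' by the local min-max family, and then to push it down). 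Slices that are varifold-close to $\Sigma_j$ but whose mod $2$ flat limit differs from $\Sigma_j$ (the ``wrong multiplicity'' phenomenon) are handled by Theorem \ref{thm: deform the sweepout if all its values are close to a minimal cycle and a different current}, which also decreases area by a definite amount. Combining these homotopies over the finitely many $\Sigma_j\in\Lambda$, and using that the modifications are supported in disjoint neighborhoods, produces a competitor $\Phi'\in\tau$ (the homotopy stays within the homology class since it is a genuine homotopy of maps into $\oli{\mc T}$) with $\sup_x\mathcal H^2(\Phi'(x))\le W(\tau)-\varepsilon_0$ for some $\varepsilon_0>0$, contradicting the definition of $W(\tau)$ as an infimum. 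Hence some $\Sigma_j$ has index $\ge k$; combined with the index upper bound $\le k$, we get index exactly $k$, and $\mathcal H^2(\Sigma_j)=W(\tau)$ by construction.

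The main obstacle I expect is the interface between the Simon--Smith (smooth, topology-controlled) category and the mod $2$ flat-cycle machinery of \cite{MN18}: White's Local Min-max Theorem is naturally stated for families of cycles, and one must check that the homotopies it supplies can be realized by families of \emph{embedded} tori/spheres (i.e. inside $\oli{\mc T}$, in the generalized smooth topology) rather than arbitrary flat cycles, and that the genus bound is not violated during the interpolation. This is precisely where the deformation theorems proved in Section \ref{sec: deformation theorem} (the smooth/almost-minimizing versions of Pitts' combinatorial deformation) and the multiplicity one theorem \cite{wangzc2023existenceFour} enter: the former guarantee that the high-area part of the sweepout stays near the smooth minimal surfaces where interpolation is available, and the latter guarantees that the limit is attained with multiplicity one, so that no genus can leak into a spurious multiple cover. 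A secondary technical point is keeping track of the finitely many minimal surfaces in $\Lambda$ simultaneously and localizing the deformations in pairwise disjoint $\mathbf F$-balls, which is routine once the single-surface case is in hand.
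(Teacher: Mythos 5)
Your proposal follows essentially the same approach as the paper: pull tight, rule out higher-index and non-smooth pieces using Theorem \ref{thm: Deform away from nonsmooth critical varifolds} and Theorem \ref{thm: deform the sweepout if all its values are close to a minimal cycle and a different current}, assume for contradiction $\mathrm{index}(\Sigma)=m<k$, apply White's Local Min-max Theorem and the two homotopies $H_1,H_2$ to push the high-area slices near $\Sigma$ onto $\{F_w(\Sigma):w\in\partial B^m\}$, and extract a competitor in $\tau$ with strictly smaller width. One caveat: you describe the competitor as arising from a ``genuine homotopy of maps into $\oli{\mc T}$,'' whereas the paper's argument is a cut-and-paste — it excises the high-area region $V\subset X$, attaches the homotopy collar $[0,2]\times\partial V$, and caps off with $F_Q(\Sigma)$ over a $k$-chain $Q$ in $\partial B^m$ filling $w_\#(\partial V)$; the topological fact making this cap-off possible, and the precise place where the contradiction hypothesis $m<k$ enters, is $H_{k-1}(\partial B^m;\mb Z_2)=0$, which your sketch leaves implicit.
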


\begin{proof}
We closely follow the proof of \cite{MN18}*{Theorem 7.2}, with some alterations. First, we claim that $W(\tau) > 0$. Take $\{\Phi_i:X_i\to\omc T\}$ such that $(\Phi_i)_*[X_i]=\tau\in H_k(\omc T;\mb Z_2)$ and 
\[W(\tau)=\lim_{i\to\infty} \sup \{\mc H^2(\Phi_i(x));x\in X_i\}.\]
Then by Simon-Smith min-max theory (c.f. \cites{wangzc2023existenceFour, Ketover13}), there exists an embedded minimal sphere or torus $S_i$ with 
\[  \mc H^2(S_i) \leq \sup \{\mc H^2(\Phi_i(x));x\in X_i\}.\]
By compactness of minimal surfaces with bounded topology, $\mc H^2(S_i)$ has a positive lower bound. It follows that $W(\tau)>0$.

We proceed to show the index lower bound. Let $\{\sum_{i} \mk s_{i}^{j}\}_{j}$ be a minimizing sequence of representatives of $\tau$, i.e.
    \[
    [\sum_{i} \mk s_{i}^{j}] = \tau \quad \forall j \quad \text{and} \quad
    \lim_{j \rightarrow \infty} \sup_{i, x \in \Delta^k} \mathcal{H}^2(\mk s_i^j(x)) = W(\tau). 
    \]
    By Section 2.1 of \cite{Hat-AT-book}, for each $\sum_{i} \mk s_{i}^j$, there exist a corresponding $k$-dimensional $\Delta$-complex $X$ ($X$ depends on $j$) and a map $\Phi_j : X \rightarrow \cup_{i} \mk s_{i}^j(\Delta^k) \subset \oli{\mc T}$ that is continuous in the smooth topology. Moreover, we have
    \[
    \quad (\Phi_j)_{*}[X] = \tau. 
    \]

Since the metric $g$ is as in Proposition \ref{prop: a choice of generic metric}, there exists $\alpha > 0$ such that every embedded minimal cycle in $\overline{\mathbf{B}}^{\mathbf{F}}_{\alpha}(\mathbf{C}(\{\Phi_j\})) \cap \mathcal{W}_{L, k}$ has multiplicity one. By deforming $\{\Phi_j\}$ to avoid minimal surfaces of higher index, we construct a pulled-tight minimizing sequence $\{\Psi_j\}$ such that $\mathbf{C}(\{\Psi_j\}) \cap \mathcal{W}^{k + 1}_{L} = \emptyset$ and
     \[
     \mathbf{C}(\{\Psi_j\}) \subset \overline{\mathbf{B}}^{\mathbf{F}}_{\alpha}(\mathbf{C}(\{\Phi_j\})). 
     \]

Without loss of generality, we assume that $\Lambda := \mathbf{C}(\{\Psi_j\}) \cap \mathcal{W}_{L, k} = \{\Sigma\}$. By contradiction, $\Index(\Sigma)=m < k$. Let $\varepsilon_{0}(\Sigma) > 0$ be the constant in White's Local Min-max Theorem (see \cite{MN18}*{Theorem 6.1}) associated with $\Sigma$, i.e. for every $S\in \omc T$ with $\mc F (S,\Sigma)<\eps_0$, we have 
     \[   
\max_{v\in \oli B^m} \|(F_v)_\#S\|(S^3)\geq \mc H^2(\Sigma),
     \]
     where $\{F_v\}_{v\in \oli B^m}\subset \mr{Diff}(S^3)$ generated by the first $j$ eigenfunctions of Jacobi operator of $\Sigma$; see \cite{MN18}*{Theorem 6.1} for more details. Moreover, there exists $\mu>0$ such that 
     \begin{equation}\label{eq:def of mu}
         \mc H^2( F_v(\Sigma))<\mc H^2(\Sigma)-2\mu.
     \end{equation} 
     
     Let $\delta' > 0$ be the constant of Lemma \ref{lem: interplation result}.
     Given any 
    \[
    0 < h_1 < \min\{\varepsilon_0(\Sigma), \delta'\}   
    \]
    we may ensure that 
    for each $T \in\{0,\llbracket\Sigma\rrbracket\}\subset \mc Z_2(S^3;\mb Z_2)$,   
    \[
    \mathbf{B}_{2h_1}^{\mathbf{F}}(\Sigma) \cap \Lambda = \{\Sigma\}, \quad \mathbf{B}_{2h_1}^{\mathcal{F}}(T) \cap \mathcal{Z} = \{T\}.   
    \]
     
   Given $0<h<\min\{h_1,\mu/10\}$,  Theorem \ref{thm: Deform away from nonsmooth critical varifolds}  gives a pulled-tight minimizing sequence $\{\hat{\Psi}_j\} \subset \Pi$ such that the only smooth element of $\mathbf{C}(\{\hat{\Psi}_j\})$ is $\Sigma$ and for some $\xi \in (0, h)$, 
\[
x\in X,\mathcal{H}^2(\hat{\Psi}_{j}(x)) \geq L - \xi\quad  \Longrightarrow \quad \mf F(|\hat \Phi_j(x)|,|\Sigma|)<h.  
\]  
Because of Theorem \ref{thm: deform the sweepout if all its values are close to a minimal cycle and a different current}, we can also suppose 
\[ x\in X, \mathcal{H}^2(\hat{\Psi}_{j}(x)) \geq L - \xi\quad \Longrightarrow \quad \mc F(\hat \Phi_j(x),\Sigma)<h. \]
    Now fix a large $j$. We denote by $X^k$ the $k$-dimensional skeleton of $X$ and $\widetilde{X}^k$ the union of all $k$-simplicies of $X^k$. By Mayer-Vietoris sequence, one can assume that $X^k=\wti X^k$.

    Choose $0 < \delta < \xi/4$. Applying subdivisions to $X^k$ (still denoted by $X^k$) with 
    \[\mathbf{F}(\hat{\Psi}_j(x), \hat{\Psi}_j(y)) < \delta\]
    whenever $x, y$ live in a common cell of $X^k$. Let $V$ be the union of all {$k$-simplicies } $\mk s \in X^k$, 
    \[ \mathcal{H}^2(\hat{\Psi}_j(y)) \geq L - \xi/2\]  
    for some $y \in \mk s$.
   Then we have that 
    \begin{equation*}
        \mathbf{F}(|\hat{\Psi}_j(y)|, \Sigma) < h, \quad \mathcal{F}(\hat{\Psi}_j(y), \Sigma) < h \quad \forall y \in W.
    \end{equation*}
    Moreover, if $y\in \partial V$, then
    \begin{equation}\label{eq: deformed slice} 
        L - \xi \leq \mathcal{H}^2(\hat{\Psi}_j(y)) < L -\xi/2 \quad \forall y \in \tau.
    \end{equation}
    
    
    Recall $\{F_v\}_{v \in \overline{B}^m} \subset \Diff(S^3)$ associated with $\Sigma$ in White's Local Min-max Theorem \cite{MN18}*{Therem 6.1}. For $y \in \partial V$, define the function $A^y: \overline{B}^m \rightarrow [0, \infty)$ by 
    \[A^y(v) = \mathcal{H}^2(F_v(\hat{\Psi}_j(y))), 
    \]  
    and let $\{\psi^{y}(\cdot, t)\}_{t \geq 0} \subset \Diff(\overline{B}^m)$ be the one-parameter negative gradient flow generated by the vector field 
    \[v \mapsto -(1 - |v|^2)\nabla A^y(v) \text{ with } \lim_{t \rightarrow \infty} \psi^y(v, t) \in \partial B^m.\]

    Recall that in the proof of \cite{MN18}*{Theorem 7.2}, they defined three homotopies $H_1$, $H_2$, and $H_3$, which we will adopt with certain adjustments. Specifically, there exists homotopy  
    \[H_1: [0, 1] \times \partial V \rightarrow \oli{\mc T},\] 
    which is defined by 
    \[
    H_1(t, y) = F_{\psi^y(0, t_0 t)}(\hat{\Psi}_j(y))
    \]
    for some $t_0 > 0$ with the properties
    \begin{itemize}
        \item $\mathbf{F}(H_1(1, y), F_{w(y)}(\Sigma )) \leq 5h_1 \quad \forall y \in \partial V$ for some continuous function $w: \partial W_{j, p} \rightarrow \partial B^m$; 
        \item $\mathcal{H}^2(H_1(t, y)) < L - \mu + 5h_1<L-3\mu/2 \quad \forall (t, y) \in [0, 1] \times \partial V$.
    \end{itemize}  
    Here $\mu > \xi$ is defined in \eqref{eq:def of mu}.

    By Lemma \ref{lem: interplation result} with $\mathcal{K} = \{F_w(\Sigma): w \in \overline{B}^m\} \subset \oli{\mc T}$, we get that there exists a second homotopy
    \[H_2: [1, 2] \times\partial V \rightarrow \oli{\mc T},\]
    which interpolates between $H_1(1, \cdot)$ and $F_{w(\cdot)}(\Sigma)$ with the properties
    \begin{itemize}
        \item $\mathbf{F}(H_2(t, y), F_{w(y)}(\Sigma)) \leq \mu/8 \quad \forall (t, y) \in [1, 2] \times \partial V$;
        \item $\mathcal{H}^2(H_2(t, y)) < L - \mu \quad \forall (t, y) \in [1, 2] \times \partial V$.  
    \end{itemize}
    
    Note that $\Index(\Sigma) = m \geq 1$ and $m<k$, then $H_{k - 1}(\partial B^m; \mathbb{Z}_2) = 0$ and hence 
    \[
   [ w_{\#}(\partial V)] = 0. 
    \]
    This further suggests that we can fill in $w_{*}[\partial V]$ by a $k$-chain $Q$ in $\partial B^m$.
Consider the following $\Delta$-complex:
\[ C:=(\hat \Psi_j)_\# V + (H_1)_\#([0,1]\times \partial V)+(H_2)_\#([1,2]\times \partial V) + F_Q(\Sigma).\]
Clearly, $\partial C=0$ and $C\subset \mf B^\mf F_\mu (\Sigma)$. Hence $[C]=0\in H_k(\omc T;\mb Z_2)$ by Lemma \ref{lem: interplation result} with $\mathcal{K} = \{\Sigma\}$. Hence 
\[    
\wti Z : = (\hat \Psi_j)_\# V + (H_1)_\#([0,1]\times \partial V)+(H_2)_\#([1,2]\times \partial V) + F_Q(\Sigma).
\]
is a $k$-cycle and $[\wti Z]=\tau \in H_k(\omc T;\mb Z_2)$. By the construction, we have that 
\[
\mc H^2(\Gamma)<L-\frac{\xi}{2} \quad \text{ for all } \Gamma \in \image{\wti Z}.
\]
which leads to a contradiction. 
\end{proof}

\subsection{Relative homology min-max theorem}
Recall that given $\ell >0$,
\begin{gather*}  
    \oli{\mc T}^\ell:=\{\Sigma\in \oli{\mc T}; \mc H^2(\Sigma)<\ell\};\quad    \oli{\mc S}^\ell:=\{\Sigma\in \oli{\mc S}; \mc H^2(\Sigma)<\ell\}.
\end{gather*}
Recall that $\mc T_{min}(g)$ the collection of embedded minimal tori w.r.t. the metric $g$. 
\begin{theorem}[Relative Homology Min-max Theorem]\label{thm:relative homology min-max}
Let $g$ be a bumpy metric. Let $\tau\in H_k(\omc T^t, \omc T^s\cup \omc S^t)$ be a non-trivial homology class, $0\leq s<t$, and let 
\[ 
    W(\tau) = \inf_{[\sum \mk s_i] = \tau} \sup_{i, x \in \Delta^k} \mathcal{H}^2(\mk s_i(x)). \]
Suppose that, for some $\eps>0$, there is no embedded minimal torus $\Sigma'\in \mc T_{min}(g)$ with $\mc H^2(\Sigma')\in (s-\eps,s)$ and $\mr{index}(\Sigma')\leq k-1$ and there is no embedded minimal sphere $S'$ with $\mc H^2(S')=s$. Then $W(\tau)\in [s,t)$ with $W(\tau)>0$ if $s=0$. Moreover there exists $\Sigma \in \mc T_{min}(g)$ with $\mr{index}=k$ and 
\[    \mc H^2(\Sigma)=W(\tau).\]    
\end{theorem}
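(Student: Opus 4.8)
The plan is to run the same min-max scheme as in the proof of Theorem \ref{thm: index lower bound}, but now with a minimizing sequence of \emph{relative} cycles representing $\tau \in H_k(\oli{\mc T}^t,\oli{\mc T}^s\cup\oli{\mc S}^t;\mb Z_2)$, and to carefully keep track of two separate effects: the upper end (slices near area $t$) and the lower end (slices near area $s$, which live in the subcomplex). First I would establish $W(\tau)<t$: this is immediate from the definition of $W(\tau)$ as an infimum over relative representatives, together with the fact that a genuinely relative class cannot be represented by chains with all slices of area $<s$. Next, the lower bound $W(\tau)\ge s$ — and $W(\tau)>0$ when $s=0$ — follows as in Theorem \ref{thm: index lower bound}: if $W(\tau)<s$, then every representing relative cycle has all its slices in $\oli{\mc T}^s\cup\oli{\mc S}^t$ (for $j$ large), hence $\tau=0$ in the relative homology, a contradiction; for $s=0$ the positivity comes from the compactness of minimal surfaces with bounded genus producing a uniform lower area bound, exactly as before.

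The heart of the argument is the index characterization at level $L:=W(\tau)$. Apply Pull-tight to a minimizing sequence of relative cycles, then Theorem \ref{thm: Deform away from nonsmooth critical varifolds} and Theorem \ref{thm: deform the sweepout if all its values are close to a minimal cycle and a different current} to arrange that all slices of area $\ge L-\xi$ are ${\bf F}$-close and ${\mc F}$-close to a multiplicity-one smooth minimal $\Sigma$ with $\mc H^2(\Sigma)=L$; genericity (Proposition \ref{prop: a choice of generic metric}) rules out higher multiplicity and forces $\Sigma$ to be non-degenerate. I must first argue $\Sigma$ is a \emph{torus}, not a sphere: by hypothesis there is no embedded minimal sphere of area exactly $s$, but I also need to exclude a limiting minimal sphere at level $L>s$; here the genus lower bound is not automatic, so one invokes the relative-sweepout structure — if the min-max limit were a sphere, one could cut off the spherical slices (which lie in $\oli{\mc S}^t$, hence in the subcomplex) without changing the relative class, deforming the maximal area down by a definite amount and contradicting minimality of $W(\tau)$; this is precisely the cut-and-paste idea sketched in the introduction and uses the Multiplicity One Theorem of \cite{wangzc2023existenceFour}. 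Then, assuming $\mr{index}(\Sigma)=m<k$, I would reproduce the three-homotopy construction $H_1,H_2$ of Theorem \ref{thm: index lower bound}: deform the region $V$ of high-area simplices using White's Local Min-max Theorem \cite{MN18}*{Theorem 6.1}, interpolate via Lemma \ref{lem: interplation result}, and fill in $w_\#[\partial V]$ by a $k$-chain $Q$ in $\partial B^m$ since $H_{k-1}(\partial B^m;\mb Z_2)=0$. The resulting modified \emph{relative} cycle $\wti Z$ still represents $\tau$ (the boundary pieces lie in $\mf B_\mu^{\mf F}(\Sigma)$, which is contractible in $\oli{\mc T}$ rel the subcomplex), yet all its slices have area $<L-\xi/2$, contradicting $W(\tau)=L$. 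Hence $\mr{index}(\Sigma)\ge k$, and the index upper bound from \cite{Marques-Neves16} (valid in the Simon-Smith setting) gives equality.

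The main obstacle I anticipate is the step ensuring the contributions near the subcomplex $\oli{\mc T}^s\cup\oli{\mc S}^t$ do not interfere: when I deform $V$ down in area I must check that simplices meeting the subcomplex are handled compatibly, i.e. that the homotopies $H_1,H_2$ are performed relative to $\partial V$ in a way that keeps the modified chain a valid \emph{relative} cycle and keeps its class equal to $\tau$. This is where the hypothesis ``no minimal torus of area in $(s-\eps,s)$ with index $\le k-1$'' is used — it guarantees the subcomplex-level slices are not themselves sitting at a critical value that the deformation could get stuck on, so the pull-tight and deformation theorems can be applied without creating new critical points near area $s$. Verifying that the $\Delta$-complex bookkeeping of \cite{Marques-Montezuma-Neves20}*{Claim 3.7}-type carries over to the relative setting, and that $[\wti Z]=\tau$ in relative homology rather than absolute homology, is the delicate point; the rest is a faithful adaptation of the proof of Theorem \ref{thm: index lower bound}.
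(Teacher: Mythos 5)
Your proposal captures the correct overall mechanism (pull-tight, deformation theorems, White's local min-max, the three-homotopy cut-and-paste, filling $w_\#[\partial V]$ in $\partial B^m$, spheres living in the subcomplex), and you correctly identify the boundary interaction as the delicate point — but you stop exactly where the paper's proof does its real work, and the hypotheses are not used for the reason you guess.

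The paper's proof hinges on a trichotomy you do not make explicit: for each $j$, one considers the \emph{boundary homotopy class} $\Pi^j_\partial$ of $\Psi_j|_{\partial X}$ among maps into $\omc T^s\cup\omc S^t$, and splits according to whether $\lim_j \mathbf{L}(\Pi^j_\partial)$ is $<W(\tau)$, $=W(\tau)>s$, or $=W(\tau)=s$. In the first case, the set $V$ of high-area simplices is disjoint from $\partial X$, and your cut-and-paste goes through essentially verbatim. In the second case, however, $V$ genuinely meets $\partial X$; one must first establish via a boundary min-max that the limiting boundary surface is a minimal \emph{sphere}, and then carry out a more elaborate gluing involving the pieces $\partial_T V=\overline{\partial V\setminus\partial X}$, $W=\partial_T V\cap\partial X$, the additional homotopies $H_W$ and $\wti H$, and the verification (Claim \ref{claim:partial C1 on boundary}) that the entire boundary of the filling chain $C_1$ lands in $\omc S^t$. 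Your sketch does not provide this construction, and "perform $H_1,H_2$ relative to $\partial V$ compatibly" does not by itself produce a relative cycle in class $\tau$ when $V$ touches $\partial X$.

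Moreover, the hypotheses — no minimal torus of area in $(s-\eps,s)$ with index $\le k-1$, and no minimal sphere of area exactly $s$ — are not about keeping a deformation from getting stuck near the subcomplex. They are used precisely in the third case $W(\tau)=s$: the boundary min-max produces a minimal surface $\Sigma_j$ of index $\le k-1$ with area tending to $s$; the torus hypothesis forces $\Sigma_j$ to be a sphere (Claim \ref{claim:Sigmaj is a sphere}, since boundary torus slices have area $<s$, so a torus boundary min-max limit would have area in $(s-\eps,s)$, excluded), and the sphere hypothesis then yields the contradiction. Without the case split and the boundary min-max, the argument has no place to invoke either hypothesis, so the proof as you propose it does not actually use them — a sign that a step is missing.
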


\begin{proof} 
Clearly, we have $W(\tau) \in [s, t)$. The fact that $W(\tau) > 0$ if $s = 0$ follows similarly as in the proof of Theorem \ref{thm: index lower bound}.  We assume that $g$ is as in Proposition \ref{prop: a choice of generic metric}. 

Let $\{\Psi_j:(X,\partial X)\to (\omc T^t,\omc T^s\cup \omc S^t)\}$ ($X$ depends on $j$) be a minimizing sequence of representatives of $\tau$, i.e.
    \[
    (\Psi_j)_{*}[X] = \tau \quad \forall j \quad \text{and} \quad
    \lim_{j \rightarrow \infty} \sup_{x \in X} \mathcal{H}^2(\Psi_j(x)) = W(\tau). 
    \]

    For each $j$, consider $\Pi^j_{\partial}$ the homotopy class of ${\Psi_j}|_{\partial X}$ among all maps taking values in $\omc T^s\cup \omc S^t$. The min-max value to $\Pi^{j}_{\partial}$ is 
    \[
    \mathbf{L}(\Pi^j_{\partial}) = \inf_{\Phi \in \Pi^j_{\partial}} \,\sup_{y \in \partial X} \mathcal{H}^2(\Phi(y)) \in [0, t).   
    \]
    Depending on the value of $\lim_{j\to\infty}\mathbf{L}(\Pi^j_{\partial})$, we divide the proof into three cases:

\noindent    \textbf{Case I}: {\em Suppose that $\lim_{j\to\infty}\mathbf{L}(\Pi^j_{\partial}) < W(\tau)$.}

\smallskip 
Then we may assume that 
    \[
    \sup_{y \in \partial X^j} \mathcal{H}^2(\Psi_j(y)) < W(\tau)-\eps. 
    \]
 For simplicity, we omit the index $i$ in the following proof.

    Let $\Pi_{j}$ be the homotopy class of $\Psi_j$ among all maps relative to ${\Psi_j}|_{\partial X}$. The min-max value to $\Pi_j$ is 
    \[
    \mc L_j = \inf_{\Phi \in \Pi_j} \sup_{x \in X} \mathcal{H}^2(\Phi(x)) \in [s, t). 
    \] 
Then by Simon-Smith min-max theory, $\mc L_j$ is the area of minimal spheres or tori and $\mc L_j\to W(\tau)$. Thus, for sufficiently large $j$, $\mc L_j=W(\tau)$. Then we assume that $\{\Psi_j\}$ are in the same homotopy class $\Pi$. By \cite{wangzc2023existenceFour}, one can also assume that there exists a minimizing sequence (still denoted by $\{\Psi_j\}$) so that $\mathbf{C}(\{\Psi_j\})$ contains a multiplicity one embedded minimal torus or minimal sphere $\Sigma$. Since $g$ is as in Proposition \ref{prop: a choice of generic metric}, we assume that $\Sigma$ is the unique smooth element of $\mathbf{C}(\{\Psi_j\})$. From Theorem \ref{thm: index lower bound} and the fact that $g$ is bumpy, we have $\Index(\Sigma) = k$. 
    
Suppose by contradiction that $\Sigma$ is an embedded minimal sphere. Now let $\mu$, $h_1$ be the constants in Theorem \ref{thm: index lower bound}. 

Given $0<h<\min\{h_1,\mu/10,\eps/2\}$, as in Theorem \ref{thm: index lower bound}, there is a pulled-tight minimizing sequence $\{\hat{\Psi}_j\} \subset \Pi$ such that the only smooth element of $\mathbf{C}(\{\hat{\Psi}_j\})$ is $\Sigma$ and for some $\xi \in (0, h)$, 
\[
x\in X,\mathcal{H}^2(\hat{\Psi}_{j}(x)) \geq W(\tau) - \xi\quad  \Longrightarrow \quad \mf F(|\hat \Psi_j(x)|,|\Sigma|)<h.  
\]  
We can also suppose 
\[ x\in X, \mathcal{H}^2(\hat{\Psi}_{j}(x)) \geq W(\tau) - \xi\quad \Longrightarrow \quad \mc F(\hat \Psi_j(x),\Sigma)<h. \]
Since $\xi<h<\eps/2$, we also have that 
\[   
x\in X, \mathcal{H}^2(\hat{\Psi}_{j}(x)) \geq W(\tau) - \xi \geq W(\tau) - \frac{1}{2}\eps \quad \Longrightarrow x\notin \partial X.
\]
Applying subdivisions to $X$ (still denoted by $X$) with 
    \[\mathbf{F}(\hat{\Psi}_j(x), \hat{\Psi}_j(y)) < \xi/4\]
whenever $x, y$ live in a common cell of $X$. 
Let $V$ be the union of all $k$-simplicies in $X$, 
\[ \mathcal{H}^2(\hat{\Psi}_j(y)) \geq W(\tau) - \xi/2\]  
for some $y \in \mk s$. Then for any $x\in V$, 
    \[  \mc H^2(\hat\Psi_j(x)))  \geq   W(\tau)-\xi , \]
which implies that $V\cap \partial X=\emptyset $.

By the same argument in Theorem \ref{thm: index lower bound},
there exists $H:[0,2]\times \partial V\to \omc T^t$ (by combining $H_1$ and $H_2$ therein) such that for all $(t,y)\in [0,2]\times \partial V$,
\begin{gather*}
     \mc H^2(H(t,y))<W(\tau) -\xi/2 ; \quad  \mf F(H(t,y),\Sigma)<\mu/8,
\end{gather*}  
and $H(2,y)=F_{w(y)}(\Sigma)$ is an embedded sphere, where $(F_v)_{v\in \oli B^k}$ is from White's local min-max; see \cite{MN18}*{Theorem 6.1}.

Now consider 
\[  Y=  V\cup ([0,2]\times \partial V).  \]
Define the continuous map $\phi: (Y, \{2\}\times \partial V)\to (\omc T^t,\omc S^t )$ by 
\[  
\phi (x)=\hat{\Psi}_j(x) \quad \text { for } x\in V; \quad \phi (t,y)=H(t,y) \quad \text{ for } (t,y)\in [0,2]\times \partial V.
\]
Since $\mf F(\phi (y),\Sigma)\leq \mu/4$, the interpolation theorem \ref{lem: interplation result} with $\mathcal{K} = \{\Sigma\}$ yields that
\[\phi_{\#}[Y]=0\in H_k(\omc T^t,\omc T^s\cup \omc S^t;\mb Z_2).\]

Now let $Z= {(X\setminus V)} \cup ([0,2]\times \partial V)$ and define the continuous map 
\[\psi: (Z, \partial X\cup (\{2\}\times \partial V))\to (\omc T^t,\omc T^s\cup \omc S^t)\]
by 
\[  
\psi (x)=\hat{\Psi}_j(x) \quad \text { for } x\in X\setminus V; \quad \psi (t,y)=H(t,y) \quad \text{ for } (t,y)\in [0,2]\times \partial V.
\]
Clearly, $\phi_{\#}[Y]+\psi_{\#}[Z]=(\hat \Psi_j)_{\#}[X]$. This together with the fact of $\phi_{\#}[Y]=0$ gives that  
\[ 
\psi_{\#}[Z]= (\hat \Psi_j)_{\#}[X] = \tau. 
\]
On the other hand, the construction implies that 
\[   
  \mc H^2(\psi(x))\leq W(\tau)-\xi/2, 
\] 
which leads to a contradiction.

\medskip
\noindent\textbf{Case II}: {\em Suppose that $\lim_{j\to\infty}\mathbf{L}(\Pi^j_{\partial}) = W(\tau)> s$.}

\smallskip
Then by  Simon-Smith min-max theory together with the genus bound, index upper bound, multiplicity one theorem, there exists an embedded minimal sphere $\Sigma_j$ with 
\[  \mc H^2(\Sigma_j)=\mf L(\Pi^j_{\partial}), \quad \mr{index}(\Sigma_j)\leq k-1.\]
Since $g$ is a metric as in Proposition \ref{prop: a choice of generic metric}, then there are only finitely many minimal spheres with area upper bound. Hence we can also assume $\Sigma_j=\Sigma$ for all $j$. Let $\mu$ be the constant in \eqref{eq:def of mu}. We assume that $2\mu<W(\tau)-s$.

By the same argument as in Theorem \ref{thm: index lower bound} and {\bf Case I} above, we can also suppose that, given $0<h<\mu/10$, then there exists $\xi \in (0, h/4)$ such that 
\[
x\in\partial X,\mathcal{H}^2({\Psi}_{j}(x)) \geq W(\tau) - 2\xi\quad  \Longrightarrow \quad \mf F(|\Psi_j(x)|,|\Sigma|)<h/4 \text{ and } \mc F(\Psi_j(x),\Sigma)<h/4.  
\]  
Consider a subdivision of $X$ (still denoted by $X$) such that for each $k$-simplex $\mk t\in X$, we have 
\[   \mf F(\Psi_j(x),\Psi_j(y))<\xi/2 \quad \text{ for all }x,y\in\mk t.\]
Then for each simplex $\mk t$ that intersects $\partial X$, we have two possibilities: either 
\begin{gather*}
    \mc H^2(\Psi_j(x))\geq W(\tau)-2\xi\quad \text{ for all } x\in \mk t, \text{ or }\\
    \mc H^2(\Psi_j(x))\leq  W(\tau)-\frac{3}{2}\xi\quad \text{ for all } x\in \mk t.
\end{gather*} 
Note that in the first case, we get that 
\[   
    \mf F(|\Psi_j(x)|,|\Sigma|)<h/4 \text{ and } \mc F(\Psi_j(x),\Sigma)<h/4 \quad \text{ for all } x\in \mk t.
\]
Then by doing a second pull tight process (away from the simplex that intersects $\partial X$) as in \cite{wangzc2023existenceFour}*{Theorem 7.3, Step I} (see Theorem \ref{thm: Deform away from nonsmooth critical varifolds} and the Deformation Theorem \ref{thm: deform the sweepout if all its values are close to a minimal cycle and a different current}, we can also suppose that 
\[
x\in\partial X,\mathcal{H}^2({\Psi}_{j}(x)) \geq W(\tau) - \xi\quad  \Longrightarrow \quad \mf F(|\Psi_j(x)|,|\Sigma|)<h,\, \mc F(\Psi_j(x),\Sigma)<h.  
\]
Let $V$ be the union of all $k$-simplices $\mk t\in X$ such that there exists $x\in \mk t$ with 
\[   \mc H^2(\Psi_j(x))\geq W(\tau)-\frac{1}{2}\xi .   \]
Denote by $\partial_TV:=\oli{\partial V\setminus \partial X}$. Then for each $x\in \partial_TV$,  
\[   W(\tau)-\xi\leq  \mc H^2(\Psi_j(x))\geq W(\tau)< W(\tau)-\frac{1}{2}\xi.   \]
By the same argument as in {\bf Case I}, there exists $H:[0,2]\times \partial_TV\to \omc T^t$ such that for all 
\begin{gather*}
     \mc H^2(H(t,y))\leq \mc H^2(H(0,y))\leq W(\tau) -\xi/2 \quad \text{ for all } (t,y)\in [0,1]\times \partial_TV;\\
     \mc H^2(H(t,y))\leq W(\tau) -\frac{3}{2}\mu \quad \text{ for all } (t,y)\in [1,2]\times \partial_TV;\\
     \mf F(H(t,y),\Sigma)<\mu/4, \quad (t,y)\in [0,2]\times \partial_TV,
\end{gather*} 
and $H(2,y)=F_{w(y)}(\Sigma)$ is an embedded sphere, where $(F_v)_{v\in \oli B^{m}}$ is from White's local min-max, and $w:\partial T_V\to \partial \oli B^m$; see \cite{MN18}*{Theorem 6.1}. Observe that $H(t,y)$ is diffeomorphic to $H(0,y)$ for all $(t,y)\in[0,1]\times \partial_TV$ (c.f. the construction of $H_1$ in Theorem \ref{thm: index lower bound}), then the decreasing of the area gives
\begin{equation}\label{eq:always on boundary}
 H(0,y)\in \omc T^s\cup\omc S^t\quad \Longrightarrow\quad  H(t,y)\subset \omc T^s\cup\omc S^t\quad \text{ for all } (t,y)\in [0,1]\times \partial_TV.
 \end{equation}
Denote by $W=\partial_TV\cap \partial X$. Recall that for all $y\in \partial_TV$, $\mc H^2(\Psi_j(y))\geq W(\tau)-\xi>s$  and $\Psi_j(W)\subset \omc T^s\cup\omc S^{t}$. We conclude that $\Psi_j(W)\subset \omc S^t$. This together with \eqref{eq:always on boundary} implies that 
\begin{equation}\label{eq:[0,1]XW on St}
    H([0,1]\times W)\subset \omc S^t.
\end{equation}      
Note that $H|_{\{2\}\times \partial_T V}$ are all spheres. Recall that 
\[\mc H^2(H(t,y))\leq W(\tau) -\frac{3}{2}\mu \quad \text{ for all } (t,y)\in [1,2]\times \partial_TV.\]
Then by Lemma \ref{lem: interplation result}, we get that there exists a homotopy $H_W:[1,2]\times W\to  \omc S^{W(\tau)-5\mu/4}$ such that 
\begin{itemize}
    \item $H_W(1,y)=H(1,y)$, $H_W(2,y)=H(2,y)$;
    \item $\mf F(|H_W(t,y)|,|H(2,y)|)<2\mf F(|H(1,y)|,|H(2,y)|)<\mu/4$.
\end{itemize}

Now consider the two maps $H,H_W:[1,2]\times W\to \omc T^{W(\tau)-3\mu /4}$. By Lemma \ref{lem: interplation result} again, there exists $\wti H:[0,1]\times ([1,2]\times W)\to \omc T^{W(\tau)-\mu}$ with
\begin{itemize}
\item $\wti H(s,1,y)=H_W(1,y)=H(1,y)$,  $\wti H(s,2,y)=H_W(2,y)=H(2,y)$;
    \item $\mf F(|\wti H(s,t,y)|,H_{W}(t,y))< 2\mf F(|H_W(t,y)|,|H(2,y)|)<\mu/2$;
    \item  $\mc H^2(\wti H(s,t,y))\leq  W(\tau)-\mu/2$.
    \end{itemize}

Consider the following complex: 
\[   C_1:=V\cup ( [0,2]\times \partial_TV)\cup ([0,1]\times [1,2]\times W).\]
By identifying $\partial_TV=\{0\}\times \partial_T V$ and 
$[1,2]\times W=\{0\}\times [1,2]\times W$, 
we have that 
\[ \partial C_1=\big(\partial V\setminus \partial_TV\big)
               \cup \big([0,1]\times W\big)
               \cup \big(\{2\}\times \partial_TV\big)
               \cup \big([0,1]\times \{1,2\}\times W\big).
               \]
Define the continuous map $\psi_1:C_1\to \omc T^t$ by
\[ 
    \psi_1=\left\{
    \begin{aligned}
    &\Psi_j(x) \quad  &x\in V\\
    &H(t,y) &(t,y)\in   [0,2]\times \partial_TV\\
    &H_W(s,t,y) & (s,t,y)\in [0,1]\times [1,2]\times W.
\end{aligned} \right.
\]
\begin{claim}\label{claim:partial C1 on boundary}
    $\psi_1(\partial C_1)\subset \omc S^t$.
\end{claim}
\begin{proof}[Proof of Claim \ref{claim:partial C1 on boundary}]
By the definition of $\partial_TV$, one has that $\partial V\setminus \partial_TV\subset \partial X$. Observe that $\Psi_j(x)\geq W(\tau)-\xi>s$ for $x\in V$ and $\Psi_j(\partial V\setminus \partial_TV)\subset  \omc S^t\cup \omc T^s$. Hence 
\[  \psi_1(\partial V\setminus \partial_TV)\subset \omc S^t.\]
By \eqref{eq:[0,1]XW on St}, $\psi_1([0,1]\times W)\subset \omc S^t$. $\psi_1(\{2\}\times \partial_TV)=H(\{2\}\times \partial_TV)\subset \omc S^{W(\tau)-\mu}$ by the construction of $H$. 
For the last one, note that 
\[\psi_1([0,1]\times \{1,2\}\times W)=H_W(\{1,2\}\times W)=H(\{1,2\}\times W)\subset \omc S^{W(\tau)-\mu}\]
This finishes the proof of Claim \ref{claim:partial C1 on boundary}.
\end{proof}

Moreover, we have that $\mf F(\psi_1(x),\Sigma)<3\mu$ for all $x\in C_1$. Thus we can fill-in $\psi_1([\partial C_1])$ in $\omc S^t$ first, and then by Lemma \ref{lem: interplation result}, we conclude that $\psi_1([C_1])=0\in H_k(\omc T^t,\omc T^s\cup \omc S^t;\mb Z_2)$.

Now let  
\[  X':= (X\setminus V)\cup ( [0,2]\times \partial_TV)\cup ([0,1]\times [1,2]\times W)\]
and define $\psi_2:X'\to \omc T^t$ by
\begin{gather*}
    \psi'(x)=\Psi_j(x) \quad \text{ for } x\in X\setminus V;\\
        \psi'(x)= \psi_1(x) \quad \text{ for } x\in  [0,2]\times \partial_TV)\cup ([0,1]\times [1,2]\times W.
\end{gather*} 
Note that $\psi'_\#[X']+(\psi_1)_\#[C]=(\Psi_j)\#[X]$. Hence we conclude that 
\[  \psi'_\#[X']= (\Psi_j)\#[X]=\tau.\]
Moreover, the argument in Claim \ref{claim:partial C1 on boundary} also gives that 
\[  \mc H^2(\psi'(x))<W(\tau)-\xi/2 \quad \text{ for all } x\in  ([0,2]\times \partial_TV)\cup ([0,1]\times [1,2]\times W).\]
By the definition of $V$, we also have that 
\[  \mc H^2(\psi'(x))<W(\tau)-\xi/2\quad \text{ for all } x\in X\setminus V.\]
This contradicts the definition of $W(\tau)$. This finishes the proof of {\bf Case II}.

\medskip
\noindent{\bf Case III:} {\em Suppose that $\lim_{j\to\infty}\mathbf{L}(\Pi^j_{\partial}) = W(\tau)= s$.}

\smallskip
By Simon-Smith min-max theory together with the genus bound, index upper bound, multiplicity one theorem, there exists an embedded minimal torus or sphere $\Sigma_j$ with 
\[  \mc H^2(\Sigma_j)=\mf L(\Pi^j_{\partial}), \quad \mr{index}(\Sigma_j)\leq k-1.\]
\begin{claim}\label{claim:Sigmaj is a sphere}
    $\Sigma_j$ is a sphere for all large $j$.
\end{claim}
\begin{proof}[Proof of Claim \ref{claim:Sigmaj is a sphere}]
Suppose on the contrary that $\Sigma_j$ is a torus for all large $j$ (up to a subsequence). Observe that by definition,
\[   \Psi_j(x) \in \mc T \quad \Longrightarrow  \quad \mc \mc H^2(\Psi_j(x))<s. \]
Suppose that $x_k\in X$ with $\mc H^2(\Psi_j(x_k))\to s$. Then by the continuity of $\mc H^2(\Psi_j(\cdot))$, $\Psi_j$ converges to some element in $\omc S^s$. It follows that $\mc H^2(\Sigma_j)<s$.
By assumption, there is no embedded minimal torus with $\mr{index}\leq k-1$ and $\Area\in (s-\eps,s)$. It follows that $\mc H^2(\Sigma_j)\leq s-2\eps$. This contradicts $W(\tau)= s$.
\end{proof}
Clearly, this contradicts that there is no embedded minimal sphere with area equal to $s$. This completes the proof of Theorem \ref{thm:relative homology min-max}.

\end{proof}

\section{Morse inequalities for relative spaces}\label{sec: morse inequalities for relative spaces} 
In \cite{Marques-Montezuma-Neves20}, Marques-Montezuma-Neves established a Morse theory for minimal hypersurfaces in a Riemannian manifold by proving the strong Morse inequalities on the space of mod 2 flat chains. In this section, we follow their proof to prove that the same inequalities hold for the area functional on the space of surfaces with bounded topology in $S^3$ with positive Ricci curvature.


Given an embedded surface $\Sigma$ and $\alpha>0$, denote by $C^\infty(\Sigma,\alpha)$ the space of embedded surface which is a smooth graph over $\Sigma$ with $C^2$ graph norm less than $\alpha$. Given a set $\mc B$ of embedded surfaces, denote $C^\infty(\mc B,\alpha)=\cup_{\Sigma\in\mc B}C^\infty(\Sigma,\alpha)$. 

Given an embedded minimal surface $\Sigma$ with $\mc H^2(\Sigma) = \ell$ and $\Index(\Sigma) = k$, let $\{F_v\}_{v \in \overline{B}^k} \subset \Diff(S^3)$ be the family associated with $\Sigma$ in White's Local Min-max Theorem \cite{MN18}*{Theorem 6.1}. Then there exists $\eps_1 > 0$ such that for every $\Gamma \in C^{\infty}(\Sigma, \epsilon_1)$, $A^*(\Gamma) > A^{*}(\Sigma)$ unless $\Gamma = \Sigma$. Without loss of generality, we assume
\[
F_{v}(\Sigma) \in C^{\infty}(\Sigma, \epsilon_1/4) \quad \forall v \in \overline{B}^k. 
\]

Now write $b_j(t,s)$ the rank of $H_j(\oli{\mc T}^t,\oli{\mc T}^s\cup \oli{\mc S}^t;\mb Z_2)$.
\begin{proposition}\label{prop:local relative min-max}
Consider $g$ a metric as in Proposition \ref{prop: a choice of generic metric}. Let $r \in \mathbb{Z}_{+}$ and an embedded minimal torus $\Sigma \in \mc T_{min}(g)$ with $\Index(\Sigma) = k \leq r$ and $\mc H^2(\Sigma) = \ell$. For every $\delta > 0$, we have $\ell - \delta < s < \ell < t < \ell + \delta$ such that 
    \begin{align*}
    &b_{j}(t, s) = 0 \quad \forall j \leq r, j \neq k,\\
    &b_{k}(t, s) = 1.  
    \end{align*}
\end{proposition}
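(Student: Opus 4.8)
The plan is to run a local min-max argument on the pair of sublevel sets near $\Sigma$, showing that the only relative homology class that appears when passing through the level $\ell = \mathcal{H}^2(\Sigma)$ is the single class forced by White's Local Min-max Theorem, and that it lives in dimension $k = \operatorname{Index}(\Sigma)$. Concretely, I would first use the genericity of $g$ (Proposition \ref{prop: a choice of generic metric}) to choose $\delta$ small enough that $\Sigma$ is the unique embedded minimal torus \emph{and} the unique embedded minimal sphere (multiplicity one, by the bumpy/Ricci hypothesis) with area in $(\ell-\delta,\ell+\delta)$, and so that no minimal sphere has area exactly equal to any chosen level $s$. Then, for $\ell - \delta < s < \ell < t < \ell+\delta$, apply the Relative Homology Min-max Theorem (Theorem \ref{thm:relative homology min-max}): any nontrivial class $\tau \in H_j(\oli{\mc T}^t, \oli{\mc T}^s \cup \oli{\mc S}^t;\mb Z_2)$ with $j \le r$ must be detected by an embedded minimal torus of index exactly $j$ with area $W(\tau) \in (s,t)$, hence by $\Sigma$, forcing $j = k$. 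This already gives $b_j(t,s) = 0$ for $j \le r$, $j \ne k$.

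For the computation $b_k(t,s) = 1$, I would follow the strategy of \cite{Marques-Montezuma-Neves20}*{Claim 3.7} adapted to the relative setting. The lower bound $b_k(t,s) \ge 1$ comes from producing an explicit nontrivial relative $k$-cycle: use the family $\{F_v\}_{v \in \overline{B}^k}$ from White's Local Min-max Theorem, which parametrizes a $k$-ball of embedded tori near $\Sigma$ whose boundary sphere $\partial \overline{B}^k$ maps into surfaces of area $< \mathcal{H}^2(\Sigma) = \ell$, hence (after shrinking $\delta$ and using strict stability/negative-gradient deformation as in the construction of $H_1$ in Theorem \ref{thm: index lower bound}) into $\oli{\mc T}^s$; since the tori stay tori, they do not fall into $\oli{\mc S}^t$, and this $(\overline{B}^k,\partial \overline{B}^k)$-family is a relative cycle representing a class whose min-max value is exactly $\ell \in (s,t)$, so it is nonzero. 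For the upper bound $b_k(t,s) \le 1$, suppose two classes $\tau_1,\tau_2$; by the min-max theorem both have $W(\tau_i) = \ell$ realized by $\Sigma$, and using the interpolation lemma (Lemma \ref{lem: interplation result}) with $\mathcal{K} = \{\Sigma\}$ together with the $\varepsilon_1$-minimality of $\Sigma$ within $C^\infty(\Sigma,\varepsilon_1)$, any relative cycle with all slices $\mathbf{F}$-close to $\Sigma$ and with values in $\oli{\mc T}^s \cup \oli{\mc S}^t$ on the boundary can be deformed (cutting off the spherical part, which does not affect the torus homology, exactly as in the genus-lower-bound argument) to lie in the local model given by $\{F_v\}$; in that model the relative homology is that of $(\overline{B}^k,\partial \overline{B}^k)$, which is $\mb Z_2$ in degree $k$. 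Hence $\tau_1 = \tau_2$, i.e.\ $b_k(t,s) \le 1$, and the finiteness $b_j(t,s) < \infty$ comes along the same way.

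The main obstacle I anticipate is the upper bound $b_k(t,s) \le 1$: one must show that \emph{every} relative $k$-cycle, not just those a priori concentrated near $\Sigma$, can be pushed into an arbitrarily small $\mathbf{F}$-neighborhood of $\Sigma$ while staying in the pair $(\oli{\mc T}^t, \oli{\mc T}^s \cup \oli{\mc S}^t)$. This requires the full force of the tightening and deformation machinery — Theorem \ref{thm: Deform away from nonsmooth critical varifolds} to avoid nonsmooth critical varifolds, Theorem \ref{thm: deform the sweepout if all its values are close to a minimal cycle and a different current} to handle the discrepancy between flat and varifold limits, and a careful use of the fact (from the bumpy/generic hypothesis) that $\Sigma$ is isolated among minimal surfaces of area in $(s,t)$ and has no higher-index nearby minimal surface — so that the deformation argument from the proof of Theorem \ref{thm: index lower bound} can be localized to the pair of sublevel sets. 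The delicate point, as throughout the paper, is ensuring that the cut-off of spherical slices does not alter the relative homology class, which is exactly where one invokes that tori and spheres lie in distinct components of $\mc T \cup \mc S$ under the generalized smooth topology.
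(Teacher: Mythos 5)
Your overall strategy matches the paper's: use the Relative Homology Min-max Theorem to kill $b_j(t,s)$ for $j\leq r$, $j\neq k$; produce the candidate relative $k$-class from White's local family $\{F_v\}_{v\in\overline{B}^k}$; then show it is nonzero and, by a localization/cut-and-paste argument, unique. You have correctly identified where the real work is.

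However, your argument for the lower bound $b_k(t,s)\geq 1$ has a genuine circularity. You write that the $(\overline{B}^k,\partial\overline{B}^k)$-family represents ``a class whose min-max value is exactly $\ell\in(s,t)$, so it is nonzero.'' But $W(\tau')$ is an infimum over \emph{all} representatives of $\tau'$, and there is no a priori reason why every representative must pass through the level $\ell$; if $\tau'$ were zero, it would admit representatives landing entirely in $\oli{\mc T}^s\cup\oli{\mc S}^t$, and the ``min-max value'' argument would collapse. The Relative Homology Min-max Theorem \ref{thm:relative homology min-max} only gives $W(\tau')\in[s,t)$ under the hypothesis that $\tau'$ is nontrivial, so you cannot invoke it to establish nontriviality. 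The paper closes this gap with a topological degree argument: it assumes $\tau'=0$, obtains a fill-in $H:Y\to\oli{\mc T}^t$, restricts attention to the region $W$ where $H(y)$ is a small $C^2$-graph over $\Sigma$, and then uses the auxiliary functional $A^*$ (via Fact \ref{fact: White}) together with the map $P^\Sigma(v)=\sum_i\|F_v(\Sigma)\|(\eta_i)e_i$ to locate a slice $H(y')$ whose image under $P$ vanishes, forcing $A^*(H(y'))\geq A^*(\Sigma)=\ell$ and hence $\mathcal H^2(H(y'))\geq\ell$; this slice must be a torus not in $\oli{\mc S}^t$ nor in $\oli{\mc T}^s$, yet it lies on the far side of $W$ where $H$ escapes the $C^\infty(\Sigma,\eps_1/8)$ neighborhood --- a contradiction. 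Without some version of this degree count your claim that $\tau'\neq0$ is unsupported.

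For the upper bound $b_k(t,s)\leq 1$, your outline --- tighten the minimizing sequence toward $\Sigma$, cut off the sublevel part, fill the boundary by the cone $Q$ via $\overline{w}$, and identify the result with $\Psi'_*[\overline{B}^k]$ --- is the paper's argument, and you correctly flag the use of Theorems \ref{thm: Deform away from nonsmooth critical varifolds} and \ref{thm: deform the sweepout if all its values are close to a minimal cycle and a different current} as well as Lemma \ref{lem: interplation result}. But note that the paper does not literally ``push every relative $k$-cycle into a small $\mathbf F$-neighborhood of $\Sigma$''; rather it decomposes the domain into $V$ (the high-area region, which concentrates near $\Sigma$) and $X\setminus V$, shows the glued cycle $C=V\cup([0,2]\times\partial V)\cup Q$ is null-homologous by the interpolation lemma, and then deforms $\overline{X}\setminus Q$ below level $\ell-\mu$ (using that no minimal surface of index $\leq k$ has area in $[\ell-\mu,\ell-\xi/16)$) to conclude $\tau=\psi_*[Q]=\Psi'_*[\overline{B}^k]$. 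So the essential point is not that the whole cycle is pushed near $\Sigma$ but that the part \emph{away} from $\Sigma$ can be retracted into $\oli{\mc T}^{\ell-\mu}\cup\oli{\mc S}^t$, leaving only the cone over $\partial V$.
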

\begin{proof}
We closely follow the proof of \cite{Marques-Montezuma-Neves20}*{Proposition 3.6}, with some alterations. By assumption on $g$, $\Sigma$ is the unique $g$-minimal surface with index no more than $r$ and area lies in $(\ell - \delta', \ell + \delta')$ for some $\delta' > 0$. 

Let $s, t$ be such that $\ell - \delta' < s < \ell < t < \ell + \delta'$. To show $b_{j}(t, s) = 0$ when $j \leq r, j \neq k$, we consider a non-trivial element $\tau \in H_{k}(\oli{\mc T}^t,\oli{\mc T}^s\cup \oli{\mc S}^t; \mathbb{Z}_2)$. By the Relative Homology Min-max Theorem \ref{thm:relative homology min-max}, there exists an embedded minimal torus $\Sigma' \in \mc T_{min}(g)$ with $\Index(\Sigma') = j$ and $\mc H^2(\Sigma') \in [s, t)$, which gives a contradiction.         

We proceed to show $b_{k}(t, s) = 1$. First, using the same argument by White yields the fact:

\begin{fact}\label{fact: White} 
    For each $\zeta > 0$, there exists $\rho = \rho(\zeta, \Sigma, \eps_1) > 0$ such that if there exists an embedded torus $S$ so that $S \in C^{\infty}(\Sigma, \eps_1/2)$ and $A^{*}(S) \leq A^{*}(\Sigma) + \rho$, then $S \in C^{\infty}(\Sigma, \zeta)$.  
\end{fact}

Let $t \in (\ell, \ell +\delta)$ be such that $t - \ell < \min\{\delta', \rho(\eps_1/16, \Sigma, \eps_1)\}$. We choose $0 < \mu < \delta'/2$ small so that 
\[
\mc H^2(F_v(\Sigma)) < \mc H^2(\Sigma) - 2\mu \quad \forall v \in \partial B^k. 
\]

Consider the smooth map $P^{\Sigma}: \overline{B}^k \rightarrow \mathbb{R}^k$ defined by 
\[
P^{\Sigma}(v) = \sum_{i = 1}^k ||F_v(\Sigma)||(\eta_i) \cdot e_i. 
\]
Since $P^{\Sigma}(0) = 0$ and $DP^{\Sigma}(0) = \Id$, we have 
\[
(P^{\Sigma})_{*}[\partial \overline{B}^{k}] \neq 0 \in H_{k - 1}(\mathbb{R}^{k} \setminus \{0\}; \mathbb{Z}_2).  
\]

Define the map $\Psi': \overline{B}^k \rightarrow \oli{\mc T}^t$ by $v \mapsto F_v(\Sigma)$. Let 
\[
\tau' = \Psi'_{*}[\overline{B}^k] \in H_{k}(\oli{\mc T}^t,\oli{\mc T}^{\ell - \mu}\cup \oli{\mc S}^t; \mathbb{Z}_2)
\]
be the relative homology class naturally associated with the $k$-parameter family $\{F_v\}$. In the next two claims, we will show that $\tau'$ is the unique element which contributes to $b_k(t, s)$.     

\begin{claim}\label{claim: b_k geq 1}
    $\tau' \neq 0$. 
\end{claim}

\begin{proof}[Proof of Claim \ref{claim: b_k geq 1}]
    Suppose $\tau' = 0$. Then we can find a $(k + 1)$-dimensional $\Delta$-complex $Y$ and a continuous map $H: Y \rightarrow \oli{\mc T}^t$ so that $\partial Y$ decomposes into disjoint $k$-dimensional $\Delta$-subcomplexes 
    \[
    \partial Y = \overline{B}^k \cup Z, 
    \]
    satisfying
    \begin{enumerate}
        \item $H|_{\overline{B}^k} = \Psi'$,
        \item $H(Z) \subset \oli{\mc T}^{\ell - \mu} \cup \oli{\mc S}^t$,
        \item $H(y') \in C^{\infty}(H(y), \eps_1/16)$ for every $(k + 1)$-simplex $s \in Y$ and all $y, y' \in s$.    
    \end{enumerate}
    Now set 
    \[
    W = \cup \{(k + 1)\text{-simplices } \mk s \in Y: H(y) \in C^{\infty}(\Sigma, \eps_1/4) \text{ for some } y \in \mk s\}.
    \]
    By definition, $H(y) \in C^{\infty}(\Sigma, 3\eps_1/8)$ for every $y \in W$ and is an embedded torus. Moreover, if $\mk t$ is a $k$-simplex in $\partial W \setminus (\overline{B}^k \cup Z)$, then we have $H(y) \notin C^{\infty}(\Sigma, \eps_1/8)$ for any $y \in \mk t$. 
    
    Since $\overline{B}^{k} \subset \partial W$, we may write 
    \[
    \partial W = \overline{B}^k \cup B' 
    \]
    as a disjoint union of two $k$-dimensional $\Delta$-subcomplexes with $\partial B' = \partial \overline{B}^k$. Let $P: B' \rightarrow \mathbb{R}^k$ be a continuous map defined by 
    \[
    P(y) = \sum_{i = 1}^k ||H(y)||(\eta_i) \cdot e_i
    \]
    From $P|_{\partial B'} = P^\Sigma|_{\partial \overline{B}^k}$ we obtain
    $P_{*}([\partial B']) \neq 0 \in H_{k - 1}(\mathbb{R}^{k} \setminus \{0\}; \mathbb{Z}_2)$ and the existence of $y' \in B'$ with $P(y') = 0$ by a degree argument. Hence 
     \[
    \ell = A^{*}(\Sigma) \leq A^{*}(H(y')) < t < \ell + \rho(\eps_1/16, \Sigma, \eps_1). 
    \]
    It follows from Fact \ref{fact: White} that $H(y') \in C^{\infty}(\Sigma, \eps_1/16)$. On the other hand, since $y' \in W$, $H(y') \in C^{\infty}(\Sigma, \eps_1/2)$ and hence   
    \[
    \mc H^2(H(y')) = A^{*}(H(y')) \geq A^{*}(\Sigma) = \mc H^2(\Sigma) = \ell. 
    \]
    This together with the fact that $H(y')$ is an embedded torus implies $y' \notin Z$. Then $H(y') \notin C^{\infty}(\Sigma, \eps_1/8)$, which gives a contradiction and proves the claim. 

\end{proof}

\begin{claim}\label{claim: b_k = 1} 
    $H_{k}(\oli{\mc T}^t,\oli{\mc T}^{\ell - \mu}\cup \oli{\mc S}^t; \mathbb{Z}_2) = \{0, \tau'\}$. 
\end{claim}

\begin{proof}[Proof of Claim \ref{claim: b_k = 1}] 
    Let $\tau \neq 0 \in H_{k}(\oli{\mc T}^t,\oli{\mc T}^{\ell - \mu}\cup \oli{\mc S}^t; \mathbb{Z}_2)$. From Theorem \ref{thm:relative homology min-max} we know $W(\tau) = \mc H^2(\Sigma) = \ell$. There exists a homotopy class $\Pi$ of maps $\Psi: X \rightarrow \oli{\mc T}^t$ ($X$ is a $k$-dimensional $\Delta$-complex) relative to $\partial X$ with $\Psi(\partial X) \subset \oli{\mc T}^{\ell - \mu}\cup \oli{\mc S}^t$ such that $\mathbf{L}(\Pi) = \ell$ and $\Psi_{*}([X]) = \tau$.      

    Consider a minimizing sequence $\{\Psi_j\} \subset \Pi$. By our assumption on the metric $g$, $\Sigma$ is the only smooth element of $\mathbf{C}(\{\Psi_j\})$. Hence for $0 < h < \min\{\mu/8, h_1\}$, we obtain from Theorem \ref{thm: Deform away from nonsmooth critical varifolds} another minimizing sequence $\{\hat{\Psi}_j\} \subset \Pi$ such that $\Sigma$ is the only smooth element of $\mathbf{C}(\{\hat{\Psi}_j\})$ and for some $\xi \in (0, \mu/2)$ and $j$ large enough, 
    \[
    x\in X,\mathcal{H}^2(\hat{\Psi}_{j}(x)) \geq \ell - \xi\quad  \Longrightarrow \quad \mf F(|\hat \Psi_j(x)|,|\Sigma|)<h \text{ and } \mc F(\hat \Psi_j(x),\Sigma)<h.  
    \] 

    Now fix a large $j$ and our goal is to show $\tau = \tau'$. To this end, we interpolate between the map $\hat{\Psi}_j: X \rightarrow \oli{\mc T}^t$ and another map $\overline{\Psi}: \overline{X} \rightarrow \oli{\mc T}^t$ ($\overline{X}$ is obtained from $X$ by a cut-and-paste procedure). By construction, the non-trivial $k$-th homology part of $\overline{X}$ is related to $\overline{B}^k$ via some map $\overline{w}$ and hence 
    \[
    \tau = ({\hat{\Psi}_j})_{*}[X] = \overline{\Psi}_{*}[\overline{X}] = \Psi'_{*}[\overline{B}^k] = \tau',
    \]
    which shows Claim \ref{claim: b_k = 1}.  

    Assuming
    \[
    \sup_{x, x' \in t} \mathbf{F}(\hat{\Psi}_j(x), \hat{\Psi}_j(x')) < \min\{\xi/8, \mu/4\} \text{ for every $k$-simplex $t \in X$}
    \]
    and considering 
    \[
    V = \{k\text{-simplices } \mk t \in X: \mathcal{H}^2(\hat{\Psi}_j(x)) \geq \ell - \xi/4 \text{ for some } x \in \mk t\}.   
    \] 
    Since $\Sigma$ is the unique $g$-minimal surface with index no more than $r$ and area lies in $(\ell - \delta', \ell + \delta')$, we can deform $\hat{\Psi}_{j}|_{\partial X}$ so that $V \cap \partial X = \emptyset$. Note that
    \[
    \mathcal{H}^2(\hat{\Psi}_j(y)) \leq \ell - \xi/4 \quad \forall y \in \partial V. 
    \]
    
    By the same argument in Theorem \ref{thm: index lower bound},
    there exists $H:[0,2]\times \partial V\to \omc T^t$ (by combining $H_1$ and $H_2$) such that for all $(t,y)\in \partial V$
\begin{gather*}
     \mc H^2(H(t,y))<\ell -\xi/2; \quad  \mf F(H(t,y),\Sigma)< \mu/4, 
\end{gather*}

    Let $Q$ be the cone over $\partial V$, and let $\overline{w}: Q \rightarrow \overline{B}^k$ be a continuous extension of $w$ that sends the vertex of $Q$ to $0$, where $w: \partial V \rightarrow \partial B^k$ is a continuous function.  Consider 
    \[C = V \cup ([0, 2] \times \partial V) \cup Q\]
    and define the continuous map $\psi: C \rightarrow \oli{\mc T}$ by 
    \[  
    \psi (x)=\hat{\Psi}_j(x) \text { for } x\in V; \quad \psi (t,y)=H(t,y) \text{ for } (t,y)\in [0,2]\times \partial V; \quad \psi(q) = F_{\overline{w}(q)}(\Sigma) \text{ for } q \in Q. 
    \]
    Note that $\partial C = 0$ and $C \subset \mf B^\mf F_{\mu/4}(\Sigma)$. Hence $\psi_*[C]=0\in H_k(\omc T^t,\omc T^{\ell -\mu} \cup \omc S^t;\mb Z_2)$ by Lemma \ref{lem: interplation result} with $\mathcal{K} = \{\Sigma\}$.

    Now set $\overline{X} = (X \setminus V) \cup ([0, 2] \times \partial V) \cup Q$ and define the map $\overline{\Psi}: \overline{X} \rightarrow \oli{\mc T}^{\ell - \xi/16}$ by 
    \[
    \overline{\Psi}|_{X \setminus V} = \hat{\Psi}_{j} \quad \text{and} \quad \overline{\Psi}|_{([0, 2] \times \partial V) \cup Q} = \psi. 
    \]
    Since $(\hat{\Psi}_j)_{*}[X] = \tau$ and $\psi_{*}[C] = 0$, we have  
    \[
    \overline{\Psi}_{*}[\overline{X}] = \tau. 
    \]
    
    Consider $\Pi_{\partial}$ the homotopy class of maps $\overline{\Psi}|_{\overline{X}\setminus Q}$ relative to the boundary $\partial (\overline{X} \setminus Q)$, where $\overline{\Psi}(\partial (\overline{X} \setminus Q)) \subset \oli{\mc T}^{\ell - \mu}$. Since there is no minimal surface with index less than or equal to $k$ and area in $[\ell - \mu, \ell - \xi/16)$, we can deform $\overline{X} \setminus Q$ into the relative part $\oli{\mc T}^{\ell - \mu}\cup \oli{\mc S}^t$. In this way we obtain 
    \[
    \psi_{*}([Q]) = \tau. 
    \]
    Based on the fact that $\psi|_{Q} = \Psi' \circ \overline{w}$, we deduce the following: 
    \[
    \tau = \psi_{*}[Q] = \Psi'_{*}(\overline{w}_{*}[Q]) = \Psi'_{*}[\overline{B}^k] = \tau',   
    \]
    which completes the proof of the claim and of the proposition. 
\end{proof}\end{proof}

Note that $b_j(t,s)$ are subadditive, meaning that for $r<s<t$,
\begin{equation}\label{eq:subadd}
    b_k(t,r)\leq b_k(t,s)+b_k(s,r).
\end{equation}
Indeed, the long exact sequence induced by the short exact sequence 
\[    (\oli{\mc T}^s\cup\oli{\mc S}^t,\oli{\mc T}^r\cup\oli{\mc S}^t)\to (\oli{\mc T}^t,\oli{\mc T}^r\cup \oli{\mc S}^t)\to (\oli{\mc T}^t,\oli{\mc T}^s\cup \oli{\mc S}^t)\]
gives that 
\[
   b_k(t,r)\leq b_k(t,s)+\mr{rank}\,H_k(\omc T^s\cup\omc S^t,\omc T^r\cup\omc S^t;\mb Z_2)
\]
By Excision Theorem \cite{Hat-AT-book}*{Theorem 2.20},
\[
H_k(\omc T^s\cup\omc S^t,\omc T^r\cup\omc S^t;\mb Z_2)\simeq H_k(\omc T^s,\omc T^r\cup\omc S^s;\mb Z_2).
\]
Then \eqref{eq:subadd} follows immediately.

Denote by 
\[   \mk s_k(t,s):=b_k(t,s)-b_{k-1}(t,s)+\cdots+(-1)^kb_0(t,s).\]
Then the above long exact sequence implies that $\mk s_k(t,s)$ is also subadditive, meaning that for $r<s<t$,
\begin{equation*}
    \mk s_k(t,r)\leq \mk s_k(t,s)+\mk s_k(s,r).
\end{equation*}

Denote by $c_k(\ell)$ the number of minimal tori with index equal to $k$ and area less than $\ell$. Denote by $\beta_k(\ell)$ the rank of $H_k(\oli{\mc T}^\ell,\oli{\mc S}^\ell;\mb Z_2)$.
\begin{theorem}[Relative Strong Morse Inequalities]\label{thm:relative strong Morse inequality}
Suppose $g$ is a bumpy metric on $S^3$ with positive Ricci curvature. 
 For any fixed $\ell\in (0,\infty)$, we have $\beta_k(\ell)<\infty$ and the Strong Relative Morse Inequalities for every $k\in \mb Z_+$:
 \[    c_k(\ell)-c_{k-1}(\ell)+\cdots +(-1)^{k}c_0(\ell)\geq \beta_k(\ell)-\beta_{k-1}(\ell)+\cdots+(-1)^k\beta_0(\ell).
 \]
 In particular, we have 
 \[    c_k(\ell)\geq \beta_k(\ell)\]
for every $k\in \mb Z_+$.
\end{theorem}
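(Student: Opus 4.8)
The plan is to run the classical Morse-theoretic argument of Marques–Montezuma–Neves \cite{Marques-Montezuma-Neves20}*{Section 3} in the relative setting, using subadditivity of the relative Betti numbers $b_k(t,s)$ and the local computation in Proposition \ref{prop:local relative min-max} as the two inputs. First I would fix $\ell\in(0,\infty)$ and recall that, by the bumpiness of $g$ together with the compactness theorem for minimal surfaces of bounded index and area (Sharp's compactness for the index bound, plus the genus bound from the Simon–Smith setting), the minimal tori with index $\leq k$ and area $<\ell$ form a \emph{finite} set; call its cardinality $c_k(\ell)$, and list the distinct areas $0<\ell_1<\ell_2<\cdots<\ell_N<\ell$ at which some minimal torus of index $\leq k$ occurs. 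I would then pick a common subdivision $0=t_0<t_1<\cdots<t_M=\ell$ of the interval fine enough that every $\ell_i$ lies in the interior of exactly one subinterval $(t_{p-1},t_p)$ containing no other $\ell_j$, and such that the endpoints $t_p$ are regular values in the sense that no minimal torus of index $\leq k$ has area equal to any $t_p$.

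Next I would telescope: by the subadditivity \eqref{eq:subadd} of $b_k$, iterated over the chain $0=t_0<t_1<\cdots<t_M=\ell$, one gets
\[
\beta_k(\ell)=b_k(\ell,0)\le \sum_{p=1}^{M} b_k(t_p,t_{p-1}).
\]
For a subinterval $(t_{p-1},t_p)$ that contains no area of any minimal torus of index $\leq k+1$ (more precisely: no area of a minimal torus of index $\le r$ for the relevant range $r$, and no area of a minimal sphere either), the Relative Homology Min-max Theorem \ref{thm:relative homology min-max} forces $H_j(\oli{\mc T}^{t_p},\oli{\mc T}^{t_{p-1}}\cup\oli{\mc S}^{t_p};\mb Z_2)=0$ for all $j\le k$, since a nontrivial class would produce a minimal torus of index $\le k$ with area in $[t_{p-1},t_p)$; hence $b_j(t_p,t_{p-1})=0$ for all $j\le k$ on such intervals. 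For a subinterval straddling a single $\ell_i$ at which a minimal torus $\Sigma_i$ of index $k_i$ occurs, Proposition \ref{prop:local relative min-max} (applied with $r=k$, after shrinking the subinterval around $\ell_i$) gives $b_j(t_p,t_{p-1})=\delta_{jk_i}$ for $j\le k$. (If several minimal tori share the same area $\ell_i$, the local computation splits as a direct sum over them by a Mayer–Vietoris / localization argument, contributing one to $b_{k_i}$ for each.) Substituting into the telescoped inequality yields $\beta_k(\ell)\le \#\{i: k_i=k\}=c_k(\ell)$, and in particular $\beta_k(\ell)<\infty$.

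To upgrade $c_k\ge\beta_k$ to the full strong inequality, I would instead telescope the \emph{alternating sums} $\mk s_k(t,s)=\sum_{j=0}^k(-1)^{k-j}b_j(t,s)$, which are also subadditive by the remark following \eqref{eq:subadd}. Over each elementary subinterval the local computation gives $\mk s_k(t_p,t_{p-1})=(-1)^{k-k_i}$ when a single index-$k_i$ torus appears (and $0$ when none appears with index $\le k$), while $\mk s_k(\ell,0)=\sum_{j=0}^k(-1)^{k-j}\beta_j(\ell)$ by definition. Summing, $\sum_{j=0}^k(-1)^{k-j}\beta_j(\ell)\le \sum_i(-1)^{k-k_i}=\sum_{j=0}^k(-1)^{k-j}c_j(\ell)$, which is exactly the relative strong Morse inequality; the point is that the subintervals contributing an index-$j$ torus with $j>k$ contribute $0$ to $\mk s_k$ by Proposition \ref{prop:local relative min-max} (which controls $b_i$ only for $i\le r$, so one takes $r$ large enough, say $r$ equal to the max index appearing below area $\ell$, and restricts attention to the first $k+1$ of those identities).

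\textbf{Main obstacle.} The delicate point is the passage from the \emph{one-minimal-surface} local statement of Proposition \ref{prop:local relative min-max} to the case where \emph{several} minimal tori (possibly of different indices) share the same area in a single subinterval: one must justify that the relative homology of the thin slab localizes as a direct sum over disjoint $\mathbf F$-balls around the finitely many critical varifolds, i.e. a Mayer–Vietoris argument showing $H_*(\oli{\mc T}^t,\oli{\mc T}^s\cup\oli{\mc S}^t)\cong\bigoplus_i H_*(\text{local pair at }\Sigma_i)$. This requires an excision/deformation-retract argument in the generalized smooth topology on $\oli{\mc T}$, deforming any relative cycle off the region where the area is below $s+\epsilon$ and away from all but one critical varifold — precisely the kind of cut-and-paste interpolation carried out in Theorem \ref{thm:relative homology min-max} and Lemma \ref{lem: interplation result}, but now applied simultaneously near each $\Sigma_i$. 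The additional subtlety over \cite{Marques-Montezuma-Neves20} is keeping track of the sphere part $\oli{\mc S}^t$ in the relative pair throughout, so that none of the deformations push tori-cycles into the sphere stratum in an uncontrolled way; this is handled exactly as in Case II of the proof of Theorem \ref{thm:relative homology min-max}, where one checks that area-decreasing homotopies of slices starting in $\oli{\mc T}^s\cup\oli{\mc S}^t$ stay there.
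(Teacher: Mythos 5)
Your main strategy — telescoping subadditivity of $b_k$ and of the alternating sums $\mk s_k$, vanishing on ``empty'' slabs from Theorem \ref{thm:relative homology min-max}, and the single-torus local computation of Proposition \ref{prop:local relative min-max} — is exactly what the paper does, and that part of your argument is sound.

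The genuine difference is in how the coincident-area case is handled. You flag as your ``main obstacle'' the possibility that several minimal tori share the same area and propose a Mayer--Vietoris / localization argument to split the local relative homology into a direct sum over disjoint $\mathbf{F}$-balls. The paper never confronts this: it \emph{first} proves the inequalities for metrics $g$ as in Proposition \ref{prop: a choice of generic metric}, for which part (2) (rational independence of areas of distinct connected minimal surfaces) guarantees that every minimal torus of index $\le k$ with area $<\ell$ has its own isolated area value. Under that genericity assumption Proposition \ref{prop:local relative min-max} applies verbatim to each level with no localization needed, and the telescope closes cleanly (in fact the paper uses disjoint intervals $[s_i,t_i]$ around the distinct levels rather than a contiguous subdivision, which is a cosmetic difference). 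The passage to general bumpy metrics with $\mathrm{Ric}>0$ is then asserted in the last line (``It follows immediately that the theorem is valid for bumpy metrics...''), implicitly by a perturbation/stability argument. So your plan is workable but takes the harder road on a point the paper sidesteps by genericity; if you keep your approach you would actually have to supply the Mayer--Vietoris step, whereas the cleaner route is to invoke Proposition \ref{prop: a choice of generic metric}(2) up front and defer the bumpy case to a continuity argument at the end. (One minor caution: your telescoped sums should be checked to handle slabs containing minimal tori of index $>k$ correctly; there Proposition \ref{prop:local relative min-max} with $r$ the index of that torus gives $b_j=0$ for all $j\le k$, so such slabs contribute $0$ to both $\beta_k$ and $\mk s_k$ — you say this, and it is right, but the paper organizes this slightly differently by only listing tori of index $\le k$ in its finite collection.)
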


\begin{proof}
We first prove the theorem for Riemannian metrics as in Proposition \ref{prop: a choice of generic metric}. Suppose that $g$ is such a metric. Fix $k\in \mb Z_+$. Let $\{\Sigma_1,\Sigma_2,\cdots,\Sigma_q\}$ be the collection of minimal tori w.r.t metric $g$ with $\mc H^2<\ell$ and $\mr{index}\leq k$. We can suppose that they are ordered so that 
\[ \mc H^2(\Sigma_q)<\mc H^2(\Sigma_{q-1}) < \cdots<\mc H^2(\Sigma_1).\]
Set $a_i=\mc H^2(\Sigma_i)$ and $m_i=\mr{index}(\Sigma_i)$. Proposition \ref{prop:local relative min-max} implies that we can choose $s_i<a_i<t_i$ for every $1\leq i\leq q$ such that $t_i<s_{i-1}$ for all $2\leq i\leq q$, $0<s_q<t_1<\ell$ and 
\[   b_j(t_i,s_i)=0 \quad \forall j\leq k, j\neq m_i; \quad b_{m_i}(t_i,s_i)=1.
\]
The Relative Homology Min-Max Theorem \ref{thm:relative homology min-max} implies 
\begin{gather*}
    b_j(\ell, t_1)=0 \quad \forall j\leq r; \quad b_j(s_q,0)=0\quad \forall j\leq k;\\
    b_j(s_{i-1},t_i)=0 \quad \forall j\leq r, 2\leq i\leq q.
\end{gather*}
Note that $\beta_k(\ell)=b_k(\ell,0)$ and $b_k(t,s)$ is subadditive. Then
\[
    \beta_k(\ell)\leq b_k(\ell, t_1)+\sum_{i=1}^q b_k(t_i,s_i)+\sum_{i=2}^qb_k(s_{i-1},t_i)+b_k(s_q,0).
    \]
The right hand side of the above inequality is equal to $c-k(\ell)$. Hence $\beta_k(\ell)<\infty$ for every $k$.

Recall that $\mk s_k(t,s)$ is also subadditive. Hence 
\[    \mk s_k(\ell,0)\leq \mk s_k(\ell, t_1)+\sum_{i=1}^q \mk s_k(t_i,s_i)+\sum_{i=2}^q\mk s_k(s_{i-1},t_i)+\mk s_k(s_q,0).\]
This together with the Relative Homology Min-Max Theorem \ref{thm:relative homology min-max} implies that 
\begin{align*}
    \mk s_k(\ell, 0)
    &\leq \sum_{i=1}^q\mk s_k(t_i,s_i)
        =\sum_{1\leq i\leq q}(-1)^{k-m_i}\\ 
    &=c_k(\ell)-c_{k-1}(\ell)+\cdots +(-1)^{k}c_0(\ell).
\end{align*} 
Therefore, 
\[   \beta_k(\ell)-\beta_{k-1}(\ell)+\cdots+(-1)^k\beta_0(\ell)
\leq c_k(\ell)-c_{k-1}(\ell)+\cdots +(-1)^{k}c_0(\ell).\]
Since $k$ is arbitrary, this proves the theorem for metrics as in Proposition \ref{prop: a choice of generic metric}. It follows immediately that the theorem is valid for bumpy metrics with positive Ricci curvature. 
\end{proof}

\section{Homology of the relative space}\label{sec: construction of relative sweepouts}
In this section, we determine the homology groups of the relative space $(\omc T,\omc S)$. Then the existence of minimal tori (for bumpy metrics with positive Ricci curvature) follows from the strong Morse inequalities; see Theorem \ref{thm:relative strong Morse inequality}. Recall that $\mc T$ is homotopic to $\mc T_{min}\simeq \mb {RP}^2\times \mb {RP}^2$, but the relative space is hard to compute directly from algebraic topology. Alternatively, we use the Relative Homology Min-max Theorem \ref{thm:relative homology min-max} and the round metric on $S^3$ to construct explicit local homology classes, and then using the strong Morse inequalities and White's Perturbation Theorem \cite{Whi91}*{Theorem 3.2} to prove that there are no other nontrivial classes. Finally, by a Lusternik-Schnirelmann type argument, we finish the proof of our Theorem \ref{thm: main theorem}. 

\subsection{A 9-parameter family of tori}

We recall the notations in \cite{MN14}.
Let $B^4\subset \mb R^4$ be the open unit ball, and $S^3=\partial B^4$ be the unit sphere. 
For each $z\in B^4$, we recall the conformal map
\[
    F_z:S^3\to S^3, \quad F_z(x)=\frac{1-|z|^2}{|x-z|^2}(x-z)-z.
\]
Consider $\Sigma\subset S^3$ a Clifford torus given by 
\[ 
    \big\{ (\frac{x}{\sqrt 2},\frac{y}{\sqrt 2})\in \mb R^2\times \mb R^2; |x|=|y|=1\big\}.
\]
Consider $\{\Sigma_t\subset S^3\}$ a one-parameter family of tori
\[  
    \big\{(\cos(\pi t/2) x,\sin (\pi t/2)y)\in \mb R^2\times \mb R^2; |x|=|y|=1\big\}.
    \]

Let us recall the family of unoriented Clifford tori in $S^3$ parametrized in \cite{ketover2022flipping}*{Section 4}. One can first identify $S^3$ and $S^2$ with the group of unit quaternions and pure unit quaternions (without real part) respectively, i.e. 
\begin{align*}
    S^3 :=\{a+b\bm i+c\bm j+d\bm k:|a|^2+|b|^2+|c|^2+|d|^2=1\}, \quad
    S^2 :=\{b\bm i+c\bm j+d\bm k\in S^3\}.
\end{align*}
As in \cite{ketover2022flipping}, for $u \in S^2$, $B\subset S^2$, denote 
\[
    \tau (u,B):=\{x\in S^3:x\in P^{-1}(u,w)\cap S^3, w\in B \},
\]
where $P: \tilde{G}_2(\mathbb R^4)\to S^2\times S^2$ is a homeomorphism. It's known that $\tau(u, B)$ is a Clifford torus if $u \in S^2$ and $B$ is a great circle of $S^2$ (cf. \cite{ketover2022flipping}*{(4.16)}). 
Given $v=(v_1,v_2,v_3)\in S^2$, denote by 
\[
    E(v):=\partial \left(\left\{x\in S^2: v_1x_1+v_2x_2+v_3x_3<0\right\}\right)\subset S^2
\]
an oriented equator. Since $\tau(u,E(v))=\tau(-u,E(-v))$ is $\tau(u,E(-v))=\tau(-u,E(v))$ with the opposite orientation, we have a family of unoriented Clifford tori in $S^3$ parametrized by $\mathcal{G}: \mathbb{RP}^2 \times \mathbb{RP}^2 \to \mc T_{min}$. 
\begin{align*}
    \mathcal{G}([u_1,u_2,u_3], [v_1,v_2,v_3]) := \spt\left(\tau\left(u_1 \bm i + u_2 \bm j + u_3 \bm k, E(v_1\bm i+v_2\bm j+v_3\bm k) \right) \right).
\end{align*}
Denote by $\mc T_{min}$ the space of Clifford tori in $S^3$ w.r.t. the round metric.  
Given an oriented Clifford torus $T$ with outward normal vector field $N$ and $t\in(-1,1)$, denote by
\[  \phi_t(T):=\{\cos(\pi t/4)x+\sin(\pi t/4) N(x); x\in T\},\]
which is a torus with signed distance (in $S^3$) $\pi t/4$ to $T$. 
Consider the space $\mb{RP}^2\times \mb {RP}^2\ttimes (-1,1)$ which is the quotient space $S^2\times S^2\times(-1,1)/\sim$ with equivalent relations 
\[    (u,v,t)\sim(-u,-v,t)\sim(-u,v,-t)\sim(u,-v,-t).\]
Then the map $(u,v,t)\mapsto\phi_t(\tau(u,E(v)))$ is a well defined map from $\mb{RP}^2\times \mb {RP}^2\ttimes (-1,1)$ to $\mc T$.
Now we have the map 
\[    \Phi:\mb{RP}^2\times \mb {RP}^2\ttimes (-1,1) \times B^4\rightarrow \mc T\] 
defined by 
\[    \Phi(u,v,t,z):=F_z(\phi_t(\tau(u,E(v)))).\]

Consider the unit normal vector field $N_t$ of $\phi_t(T)$ at $\phi_t(x)$, 
\[    N_t(\phi_t(x)) = -\sin (\pi t/4)x + \cos(\pi t/4) N(x), \quad \forall x\in T, t\in (-1,1).\] 
Observe that $\phi_{-1}(T)$ and $\phi_1(T)$ are circles and $\{\phi_t(T)\}_{-1<t<1}$ gives a foliation of $S^3\setminus \phi_{\pm 1}(T)$.
Let $\zeta:[-1,1]\to [0,1]$ be a cutoff function with 
\[   \zeta(t)=\zeta(-t); \quad \zeta(t)=1 \text{ for } |t|\leq 1/4; \quad \zeta(t)=0 \text{ for } |t|\geq 3/8.\]
Let $X(\phi_t(x))=\frac{\pi}{4}\zeta(t) N(\phi_t(x))$ be a smooth vector field defined on $S^3$. Let $(\psi_s)_{s}$ be the one parameter family of diffeomorphism of $S^3$ generated by $X$. Then by direct computation,
\[    \psi_t(T)=\phi_t(T) \quad \text{ for } |t|\leq 1/4.\]
We use $(\psi^\Sigma_s)_s$ to denote this one-parameter family $(\psi_t)$ of diffeomorphism of $S^3$ associated with $\Sigma$ and $\zeta$.

Denote by $\{\partial_i\}$ the standard orthonormal basis of $\mb R^4$.  
Now fix an oriented Clifford torus $T_0$ with unit normal vector field $N$, let 
\[    e_i:= \frac{\partial_i^\perp}{\|\partial_i^\perp\|_{L^2(\Sigma)}}.    \]
Choose $\{\eta_i\}\subset C^\infty(S^3)$ so that $|\eta_i|\leq 1$ and such that 
\[    \eta_i|_\Sigma=0, \quad \nabla \eta_i|_\Sigma=\frac{1}{4\pi^2\|\partial_i^\perp\|_{L^2(\Sigma)}}e_i.\] 
Choose $\eta_0\in C^\infty(S^3)$ so that $|\eta_0|\leq 1$ and such that $\eta_0$ is the signed distance function to $\Sigma$ in a neighborhood of $\Sigma$.
Denote by $Q:\mc T\to  \mb R^5$ as follows:
\[   Q(T)=\frac{2}{\pi^3}\|T\|(\eta_0)N+  \sum_{i=1}^4\| T\|(\eta_i) \partial_i.\]

Recall that for each oriented Clifford torus $\Sigma$, there exists a unique isomorphism $u^\Sigma$ of $S^3$ such that $u^\Sigma(T_0)=\Sigma$. Now we consider a map $Q^\Sigma: \mc T\to \mb R^5$ as follows:
\[   
    Q^\Sigma(T)=\frac{2}{\pi^3}\|(u^{\Sigma})^{-1}T\|(\eta_0)N(\Sigma) + \sum_{i=1}^4\| (u^{\Sigma})^{-1}T\|(\eta_i) \partial_i,\]
where $N(\Sigma)$ is the unit normal vector of $\Sigma$.

Consider the map $P:\mc T_{min}\ttimes (-1,1)\times B^4\to \mc T_{min}\ttimes\mb R\times \mb R^4$ given by
\[   P(\Sigma, t,z)=\Big(\Sigma,Q^\Sigma(F_z(\Sigma_t))\Big).\]
Then one can verify that $P(\Sigma,0,0)=(\Sigma,0)$, and for any Clifford torus $\Sigma$, $DP(\Sigma,\cdot,\cdot)=id$. We have the following facts of Clifford torus and one can verify it easily 
\begin{enumerate}
    \item \[   \frac{d}{ds}F_{s\partial_i}|_{s=0}(z)=2(\partial_i-\langle \partial_i, z\rangle  ).\]
    \item every Clifford torus has index 5 and the Jacobi operator has  eigenfunctions are 1, $\langle N,\partial_i \rangle$, $i=1,2,3,4$ w.r.t. the eigenvalues $4, 2,2,2,2$.
\end{enumerate}

Given an embedded surface $\Sigma$ and $\alpha>0$, denote by $C^\infty(\Sigma,\alpha)$ the space of embedded surface which is a smooth graph over $\Sigma$ with $C^2$ graph norm less than $\alpha$. Given a set $\mc B$ of embedded surfaces, denote $C^\infty(\mc B,\alpha)=\cup_{\Sigma\in\mc B}C^\infty(\Sigma,\alpha)$.

Given a Clifford torus and a constant $c_0>0$, consider the functional on $\mc T$ given by 
\[    A^*_\Sigma (T) :=\|T\|(S^3)+c_0 \|Q^\Sigma(T)\|^2.\]
Then one can check that for some large constant $c_0>0$, $\Sigma$ is a strictly stable critical point for $A^*_\Sigma$. Moreover, the same argument by White also gives that there exists $\gamma_1>0$ such that for every embedded torus $T\in C^\infty(\Sigma,\gamma_1)$, we have $A^*_\Sigma(T)>A^*_\Sigma(T)$ unless $T=\Sigma$. This implies that for each $\alpha>0$, there exists $\kappa=\kappa(\alpha,\gamma_1)>0$ such that if an embedded torus $T\in C^\infty(\Sigma,\delta_1)$ and $A^*_\Sigma(T)\leq 2\pi^2+\kappa$, then $T\in C^\infty(\Sigma,\alpha)$.

\begin{lemma}\label{lem:5-family constants}
Given $\delta>0$, there exists $\gamma>0$ such that 
\begin{enumerate}
    \item for all $u,v,t,z$ with $|t|\leq \delta, |z|\leq \delta$, 
    \[    \mc H^2(\Phi(u,v,t,z))\leq 2\pi^2;\]
    \item for all $u,v,t,z$ with $|t|+|z|\geq  \delta/2$, 
    \[    \mc H^2(\Phi(u,v,t,z))< 2\pi^2-\gamma;\]
\end{enumerate}
Moreover, given $\eps_1>0$, there exists $\delta>0$ such that $\Phi(u,v,t,z)$ is a graph over $\Phi(u,v,0,0)$ with $C^2$ norm less than or equal to $\eps_1$ for $|t|\leq \delta, |z|\leq \delta$.
\end{lemma}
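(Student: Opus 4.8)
The plan is to prove this by a compactness–continuity argument, exploiting that $\Phi(u,v,0,0)$ parametrizes exactly the family of (unoriented) Clifford tori and that every Clifford torus has area precisely $2\pi^2$. First I would observe that when $t = z = 0$ we have $\Phi(u,v,0,0) = F_0(\phi_0(\tau(u,E(v)))) = \tau(u,E(v))$, which is a Clifford torus, so $\mc H^2(\Phi(u,v,0,0)) = 2\pi^2$ for all $(u,v)$. The key infinitesimal input is that the area functional, restricted to the conformal orbit of the Clifford torus, has a nondegenerate maximum along the ``non-Clifford'' directions: the classical computation (going back to the proof that the Clifford torus maximizes area among tori in its conformal class, cf. the facts listed right before the lemma together with the eigenvalue data $4,2,2,2,2$ of the Jacobi operator) shows that $2\pi^2$ is a strict local maximum of $\mc H^2$ as a function of $(t,z)$ for each fixed $(u,v)$, with a uniform (in $(u,v)$, by the transitivity of the isometry group on Clifford tori) quadratic upper bound. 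This immediately gives item (1) for $\delta$ small enough: $\mc H^2(\Phi(u,v,t,z)) \leq 2\pi^2$ whenever $|t|, |z| \leq \delta$.

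For item (2), I would argue by contradiction and compactness. The domain $\mb{RP}^2 \times \mb{RP}^2 \ttimes (-1,1) \times B^4$ has compact closure after we restrict $t$ and $z$ to a compact region, and on the set $\{|t| + |z| \geq \delta/2\} \cap \{|t|, |z| \leq 2\delta\}$ the function $(u,v,t,z) \mapsto \mc H^2(\Phi(u,v,t,z))$ is continuous. If no uniform gap $\gamma$ existed, there would be a sequence $(u_n, v_n, t_n, z_n)$ with $|t_n| + |z_n| \geq \delta/2$ and $\mc H^2(\Phi(u_n,v_n,t_n,z_n)) \to 2\pi^2$; passing to a subsequential limit $(u_\infty, v_\infty, t_\infty, z_\infty)$ with $|t_\infty| + |z_\infty| \geq \delta/2$, continuity of area would force $\mc H^2(\Phi(u_\infty,v_\infty,t_\infty,z_\infty)) = 2\pi^2$. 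But $\Phi(u_\infty,v_\infty,t_\infty,z_\infty) = F_{z_\infty}(\phi_{t_\infty}(\tau(u_\infty,E(v_\infty))))$ is a torus obtained from a Clifford torus by a conformal map and a normal-flow perturbation $\phi_{t_\infty}$; since a conformal image of a Clifford torus that again has area $2\pi^2$ must itself be a Clifford torus (conformal maps strictly decrease area of non-minimal competitors, or: the Clifford torus uniquely realizes $2\pi^2$ in its conformal class after normalizing the conformal factor — this is exactly White's balancing/perturbation analysis in \cite{Whi91}*{Theorem 3.2}), and since $\phi_{t_\infty}$ with $t_\infty \neq 0$ strictly decreases area (the Clifford torus is a strict local area maximum under the foliation $\phi_t$), we would need $t_\infty = 0$ and $F_{z_\infty}(\tau(u_\infty,E(v_\infty)))$ Clifford; but then $z_\infty = 0$ as well (a nontrivial conformal map moves a Clifford torus off the family or reduces area), contradicting $|t_\infty| + |z_\infty| \geq \delta/2$. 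This yields the desired $\gamma > 0$. The final ``Moreover'' statement is the standard smooth-dependence fact: $\Phi$ is smooth in all variables and $\Phi(u,v,0,0) = \tau(u,E(v))$, so by uniform continuity on the compact Clifford family, for $|t|, |z|$ small enough $\Phi(u,v,t,z)$ is a normal graph over $\Phi(u,v,0,0)$ with $C^2$-norm at most $\eps_1$.

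The main obstacle I anticipate is making the ``strict local maximum with uniform quadratic control'' precise and genuinely uniform over the whole $5$-dimensional family $\mb{RP}^2 \times \mb{RP}^2$ of Clifford tori. The uniformity is what lets a single $\delta$ work for all $(u,v)$; it follows because the isometry group $\mathrm{SO}(4)$ acts transitively on Clifford tori and intertwines the constructions $F_z$ and $\phi_t$ up to reparametrization, so the second-order expansion of the area at $(t,z)=(0,0)$ is the same quadratic form for every $(u,v)$ after applying the corresponding isometry — hence one computation at the standard $\Sigma$ suffices. An alternative, perhaps cleaner route that avoids explicit Hessian computations entirely is to run the compactness argument of the previous paragraph directly on the closed region $\{|t|, |z| \leq \delta_0\}$ for a fixed small $\delta_0$: item (1) then follows from the same limiting argument (any sequence with area exceeding $2\pi^2 + $ nothing — rather, one shows $\limsup \le 2\pi^2$ by the conformal-volume inequality $\mc H^2(F_z(S)) \cdot (\text{something}) \le \ldots$, i.e. White's estimate that conformal maps do not increase the area of a Clifford torus), so that items (1) and (2) are proved together by one compactness argument keyed to the single fact that $2\pi^2$ is attained on $\Phi(\cdot,\cdot,t,z)$ only when $(t,z) = (0,0)$.
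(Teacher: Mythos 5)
The paper itself states Lemma~\ref{lem:5-family constants} without proof, so there is no canonical argument to compare against; I am evaluating your proposal on its own.

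Your overall architecture (strict local maximum at $(t,z)=(0,0)$ from the second variation, compactness for the uniform gap, smooth dependence for the graph statement) is reasonable and, in view of how the lemma is actually invoked in Lemma~\ref{lem:nontrivial homology} (only with $\delta$ small, and only on the compact domain $[-\delta,\delta]\times\oli B^4(\delta)$ underlying $\mc X_9(\delta)$), it is the right level of generality. The uniformity-via-$\mathrm{SO}(4)$ observation is correct and necessary, and you correctly cite the eigenvalue data $4,2,2,2,2$ for the Jacobi operator as the source of the negative definiteness.

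There is, however, a genuine gap in the way you justify the key step of item~(2): that $\mc H^2(\Phi(u,v,t,z))=2\pi^2$ forces $(t,z)=(0,0)$. Two of the three justifications you offer are wrong. The claim that ``conformal maps strictly decrease area of non-minimal competitors'' is false: the correct fact is $\mc H^2(F_z(T))\le\mathcal W(F_z(T))=\mathcal W(T)$ by conformal invariance of the Willmore energy, which gives $\mc H^2(F_z(T))\le\mc H^2(T)$ only when $T$ is \emph{minimal}; for $T=\phi_t(\Sigma)$ with $t\neq0$ one has $\mathcal W(T)=2\pi^2/\cos(\pi t/2)>2\pi^2>\mc H^2(T)$, so a conformal map can perfectly well \emph{increase} the area of the non-minimal distance torus. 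Likewise, White's Perturbation Theorem \cite{Whi91}*{Theorem 3.2} concerns generic metrics near the round one and has nothing to do with conformal volume, so it is not the relevant tool here. The result you are implicitly reaching for is the Li--Yau/Marques--Neves conformal-volume estimate for the Clifford torus combined with Brendle's uniqueness: $\mc H^2(F_z(\Sigma))\le\mathcal W(\Sigma)=2\pi^2$ with equality iff $F_z(\Sigma)$ is minimal, hence a Clifford torus, hence $z=0$. But even this only handles the slice $t=0$; once $t\neq0$ the global argument is not immediate, and in fact your local second-variation estimate is what actually closes item~(2) on the small compact domain, not the global conformal-volume inequality. You should therefore drop the two wrong justifications and simply run the argument you already gave for item~(1): on $\{|t|\le\delta_0,\;|z|\le\delta_0\}$ with $\delta_0$ chosen so that the uniform quadratic upper bound from the Hessian dominates, $\mc H^2(\Phi)<2\pi^2$ whenever $(t,z)\neq(0,0)$, and then compactness on $\{\delta/2\le|t|+|z|,\;|t|,|z|\le\delta\}$ furnishes the gap $\gamma$.

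A secondary, smaller issue: as written you restrict the compactness argument to $\{|t|,|z|\le2\delta\}$, whereas item~(2) is phrased for all $(t,z)$ with $|t|+|z|\ge\delta/2$, a non-compact set. This is harmless in practice because the lemma is only invoked on $\mc X_9(\delta)$, but your write-up should make the restriction explicit (or else argue separately that the area degenerates to $0$ or $4\pi<2\pi^2$ as $|t|\to1$ or $|z|\to1$, which requires a Li--Yau concentration estimate you have not supplied).
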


\subsection{Non-trivial local relative homology classes}
For simplicity, denote by $\mc X_9(r):=\mb{RP}^2\times \mb {RP}^2\ttimes [-r,r] \times \oli B^4(r)$ for $r\in(0,1)$.

For any $0<s<2\pi^2<t$, We define the relative homology class
\[  \oli \sigma:=\Phi_*[\mc X_9(\delta)]\in H_9(\omc T^{t},\omc T^s\cup \omc S^t;\mb Z_2),\]
where $[\mc X_9(\delta)]$ denotes the generator of $H_9(\mc X_9(\delta),\partial \mc X_9(\delta);\mb Z_2)$.
\begin{lemma}\label{lem:nontrivial homology}
For every $\eps>0$, there exists $2\pi^2-\eps<s<2\pi^2<t<2\pi^2+\eps$ such that the relative homology class $\oli\sigma\neq 0$.
\end{lemma}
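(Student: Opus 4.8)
The plan for Lemma~\ref{lem:nontrivial homology} is to argue by contradiction, adapting the proof of Claim~\ref{claim: b_k geq 1} (compare also \cite{Marques-Montezuma-Neves20}*{Claim 3.7}), with the single index-$k$ minimal surface there replaced by the whole family $\mc T_{min}\cong\mb{RP}^2\times\mb{RP}^2$ of Clifford tori and the linear map $P^\Sigma$ replaced by the bundle map $P$. First I choose the constants. Fix $\eps_1>0$ small enough that $3\eps_1/8<\gamma_1$ and that the map $\pi\colon C^\infty(\mc T_{min},\eps_1/2)\to\mc T_{min}$ assigning to a surface its nearest Clifford torus is well defined and continuous; let $\delta=\delta(\eps_1)>0$ be as in Lemma~\ref{lem:5-family constants}, so that for $|t|,|z|\le\delta$ the surface $\Phi(u,v,t,z)$ is an $\eps_1/4$-graph over $\mc G(u,v)$ and $\mc H^2(\Phi(u,v,t,z))\le 2\pi^2$; let $\gamma=\gamma(\delta)$ be as in Lemma~\ref{lem:5-family constants}(2); and finally take $s\in(\max\{2\pi^2-\eps,\,2\pi^2-\gamma\},\,2\pi^2)$ and $t\in(2\pi^2,\,\min\{2\pi^2+\eps,\,2\pi^2+\kappa\})$, where $\kappa=\kappa(\eps_1/16,\gamma_1)$ is the constant from the White-type property of $A^*_\Sigma$. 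Every point of $\partial\mc X_9(\delta)$ has $|t|+|z|\ge\delta$, so Lemma~\ref{lem:5-family constants}(2) gives $\Phi(\partial\mc X_9(\delta))\subset\omc T^s$, and $\oli\sigma\in H_9(\omc T^t,\omc T^s\cup\omc S^t;\mb Z_2)$ is well defined.

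Assume $\oli\sigma=0$. As in Claim~\ref{claim: b_k geq 1}, this produces a $10$-dimensional $\Delta$-complex $Y$ and a map $H\colon Y\to\omc T^t$, continuous in the smooth topology, with $\partial Y=\mc X_9(\delta)\cup Z$, $H|_{\mc X_9(\delta)}=\Phi$, $H(Z)\subset\omc T^s\cup\omc S^t$, and — after barycentric subdivision — $H(y')\in C^\infty(H(y),\eps_1/16)$ whenever $y,y'$ lie in a common $10$-simplex. Let $W$ be the union of all $10$-simplices $\mk s$ of $Y$ admitting some $y\in\mk s$ with $H(y)\in C^\infty(\mc T_{min},\eps_1/4)$. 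Then $H(y)\in C^\infty(\mc T_{min},3\eps_1/8)$ for every $y\in W$, one has $\mc X_9(\delta)\subset\partial W$, and one may write $\partial W=\mc X_9(\delta)\cup B'$ as $\mb Z_2$-chains with $\partial B'=\partial\mc X_9(\delta)$; moreover, for every $9$-simplex $\mk t\subset B'\setminus Z$ one has $H(y)\notin C^\infty(\mc T_{min},\eps_1/8)$ for all $y\in\mk t$ (otherwise the graphicality within the adjacent $10$-simplex lying outside $W$ would force it into $C^\infty(\mc T_{min},\eps_1/4)$).

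Introduce the collapsing map $\mc Q\colon W\to E:=\mc T_{min}\ttimes\mb R\times\mb R^4$, $\mc Q(y)=\big(\pi(H(y)),\,Q^{\pi(H(y))}(H(y))\big)$, viewing $E$ as the rank-$5$ real vector bundle over $\mc T_{min}$ from the definition of $P$. Two observations drive the contradiction. \textbf{(i)} On $\mc X_9(\delta)$ one has $\pi(\Phi(u,v,t,z))=\mc G(u,v)$, so $\mc Q\circ\Phi$ agrees with $P$ up to a fiber-preserving reparametrization; since $P(\Sigma,0,0)=(\Sigma,0)$ and $DP(\Sigma,\cdot,\cdot)=\Id$, for $\delta$ small $\mc Q\circ\Phi$ restricts to a diffeomorphism of $\mc X_9(\delta)$ onto a neighborhood of the zero section of $E$; hence it sends $\partial\mc X_9(\delta)$ into $E\setminus 0$ and carries $[\mc X_9(\delta)]$ to the generator of $H_9(E,E\setminus 0;\mb Z_2)\cong H_4(\mc T_{min};\mb Z_2)\cong\mb Z_2$ (Thom isomorphism). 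That $\mc Q\circ\Phi$ lands in $E\setminus 0$ on $\partial\mc X_9(\delta)$ also follows directly: there $\mc H^2(\Phi(y))<2\pi^2-\gamma$, while $A^*_\Sigma(T)\ge 2\pi^2$ with equality only at $T=\Sigma$, so $Q^{\pi(\Phi(y))}(\Phi(y))\ne 0$. \textbf{(ii)} If $y\in B'\cap Z$, then $H(y)$, being $3\eps_1/8$-graphical over a Clifford torus, is an embedded torus, hence lies in $\omc T^s$ with $\mc H^2(H(y))<s<2\pi^2$, and the same strict minimality of $A^*_\Sigma$ forces $Q^{\pi(H(y))}(H(y))\ne 0$. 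Now the degree argument: if $\mc Q(B')\subset E\setminus 0$, then pushing the $10$-chain $W$ forward exhibits $\mc Q_*(\mc X_9(\delta))$ — hence, by (i), the generator of $H_9(E,E\setminus 0;\mb Z_2)$ — as a boundary in $(E,E\setminus 0)$, a contradiction; therefore there exists $y^*\in B'$ with $Q^{\pi(H(y^*))}(H(y^*))=0$, and by (ii) $y^*\in B'\setminus Z$. For this $y^*$, $A^*_{\pi(H(y^*))}(H(y^*))=\mc H^2(H(y^*))<t<2\pi^2+\kappa$ and $H(y^*)\in C^\infty(\pi(H(y^*)),3\eps_1/8)\subset C^\infty(\pi(H(y^*)),\gamma_1)$, so the White-type property gives $H(y^*)\in C^\infty(\pi(H(y^*)),\eps_1/16)\subset C^\infty(\mc T_{min},\eps_1/8)$, contradicting $y^*\in B'\setminus Z$. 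Hence $\oli\sigma\ne 0$.

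The step I expect to be the main obstacle is \textbf{(i)}, namely identifying $(\mc Q\circ\Phi)_*[\mc X_9(\delta)]$ with the generator of $H_9(E,E\setminus 0;\mb Z_2)$: because the base $\mb{RP}^2\times\mb{RP}^2$ is not simply connected, one cannot reduce to a single Euclidean ball as in Claim~\ref{claim: b_k geq 1}, and must instead realize $P$ as a fiberwise diffeomorphism of a disk bundle near the zero section, use the $\mb Z_2$-Thom isomorphism, and check that the reparametrization relating $\Phi$ to $P$ together with the replacement of $Q^\Sigma(F_z(\cdot))$ by $Q^{\pi(H)}(H)$ on the fill-in leave the relative homology class unchanged. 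The construction of the fill-in with the graphicality refinement and the $\eps_1$-neighborhood bookkeeping are routine adaptations of \cite{Marques-Montezuma-Neves20} and of Claim~\ref{claim: b_k geq 1}.
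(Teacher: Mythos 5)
Your proof is correct and follows essentially the same strategy as the paper's: assume $\oli\sigma=0$, take a filling $Y$, carve out the subcomplex $W$ of $10$-simplices whose images are graphical over $\mc T_{min}$, decompose $\partial W=\mc X_9(\delta)\cup B'$, run a degree argument to find a point $y^*\in B'$ where the vector $Q^{\,\cdot}(H(\cdot))$ vanishes, and derive a contradiction from the White-type property of $A^*_\Sigma$ together with the area bounds. The one place you genuinely diverge from the paper is in packaging the degree argument: you phrase it via the $\mb Z_2$-Thom isomorphism $H_9(E,E\setminus 0;\mb Z_2)\cong H_4(\mc T_{min};\mb Z_2)$ and show $\mc Q_*[\partial W]$ would be simultaneously zero (as a boundary) and the Thom class if $\mc Q(B')$ avoided the zero section, whereas the paper works one degree down, asserting $\wti P_*[\partial B']\ne 0$ in $H_8(E\setminus 0;\mb Z_2)$ and noting that $[\partial B']$ bounds $B'$. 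These are two formulations of the same degree-theoretic fact, and your Thom-isomorphism phrasing is arguably cleaner for a nontrivial base. You also replace the Smale-conjecture deformation retract $\mc D:\mc T\to\mc T_{min}$ with a nearest-Clifford-torus projection $\pi$ on the graphical neighborhood; since $\wti P$ (resp.\ $\mc Q$) is only ever evaluated on $B'\subset W$, whose images are already $O(\eps_1)$-graphical, the two projections play the same role. Both you and the paper are slightly loose about the claim that the projection applied to $\Phi(u,v,t,z)$ returns exactly $\mc G(u,v)$ on $\partial\mc X_9(\delta)$; strictly one only gets that the composed maps agree with $P$ up to a homotopy through maps into $E\setminus 0$, which you do acknowledge and which suffices at the level of homology. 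In short, your proof is a faithful and if anything more explicit rendering of the paper's argument.
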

\begin{proof}
Let $\gamma_1$ be the constant as above. Choose $\eps_1\in (0,\gamma_1/3)$. Then by Lemma \ref{lem:5-family constants}, there exists $\delta>0$ such that $\Phi(u,v,t,z)\in C^\infty(\Phi(u,v,0,0),\eps_1)$ for all $|t|\leq \delta, |z|\leq \delta$. Furthermore, there exists $\gamma>0$ such that for $|t|+|z|\geq \delta/2$,
\[    \mc H^2(\Phi(u,v,t,z))<2\pi^2-\gamma.\]

Choose $t\in (2\pi^2,2\pi^2+\eps)$ with $t-2\pi^2<\kappa(\eps_1,\gamma_1)$ and $s\in (2\pi^2-\eps,2\pi^2)$ with $2\pi^2-s<\gamma$.

Suppose $\oli\sigma=0$ by contradiction. This implies that there exists a 10-dimensional $\Delta$-complex $Y$ and a map $H:Y\to \omc T^{t}$, continuous in the smooth topology, such that $\partial Y$ is the union of two 9-dimensional $\Delta$ subcomplexes 
\[    \partial Y=\mc X_9(\delta)\cup Z,\]
disjoint as subcomplexes, such that
\begin{gather*}
H|_{\mc X_9(\delta)}=\Phi,\\
\sup_{y\in Z} \mf M(H(y))<s.
\end{gather*}

Note that by the choice of $\eps_1$ and $\delta$, $H(x)\in C^\infty(\mc T_{min},\eps_1)$ for $x\in \mc X_9(\delta)$.
Now let $Y'\subset Y$ be the compact set such that for each $x\in Y'$, $H(x)\in C^\infty(\mc T_{min},2\eps_1)$. Clearly, $\mc X_9(\delta)\subset Y'$.
We can apply the barycentric subdivision to $Y$ (still denoted by $Y$) so that for each 10-dimensional $\mk s$ with $\mk s\cap Y'\neq \emptyset$, then for all $x\in \mk s$, $H(x)\in C^\infty(\mc T_{min},3\eps_1)$. 
Let $W$ be the union of all 10-simplices $\mk s\in Y$ that intersects $Y'$. By definition, for all $x\in Y$, $H(x)\in C^\infty(\mc T_{min},3\eps_1)$. 
In particular, $H(x)$ is a smooth torus for all $x\in W$.

Consider a 9-simplex $\mk t\in \partial W$ such that $\mk t\notin \mc X_9(\delta)\cup Z$. Then there exists $\mk s\in Y$ with $\mk s\notin W$ and $\mk t\subset \partial \mk s$. By definition of $W$, $\mk s$ does not intersect $Y'$, meaning that for each $x\in \mk t$, $H(x)\notin C^\infty(\mc T_{min},2\varepsilon_1)$.

We can decompose $\partial W$ as the union of two 9-dimensional subcomplexes, disjoint as subcomplexes:
\[    \partial W=\mc X_9(\delta)\cup B'.\]
Note that $\partial B'=\partial \mc X_9(\delta)$. Recall that by Smale Conjecture, there exists a deformation retract $\mc D:\mc T\to \mc T_{min}$. 
Consider the continuous map $P:B'\to \mc T_{min}\ttimes \mb R\times \mb R^4$ by 
\[    \wti P(y):= \Big(\mc D(H(y)),Q^{\mc D(H(y))}(H(y))\Big)  .\]
Note that $\wti P|_{\partial B'}=\wti P|_{\partial \mc X_9(\delta)}=  P|_{\partial \mc X_9(\delta)}$. Therefore, 
\[    \wti P_*([\partial B'])\neq 0\in H_8(\mc T_{min} \ttimes \mb R\times \mb R^4\setminus \mc T_{min}\times \{(0,0)\};\mb Z_2).
\]This implies the existence of $y_0\in B'$ such that $\wti P(y_0)=(\mc D(\Phi(y)),0,0)$. So 
\[    A^*_{\mc D(\Phi(y))}(H(y_0))=\|H(y_0)\|(M)<t. \]
This implies $A^*_{\mc D(\Phi(y))}(H(y_0))<t<2\pi^2+\kappa(\eps_1,\gamma_1)$. Since $y_0\in W$, then 
\[H(y_0)\in C^\infty(\mc T_{min},3\eps_1)\subset C^\infty(\mc T_{min},\gamma_1),\]
and hence $H(y_0)\in C^\infty(\mc T_{min},\eps_1)$.

On the other hand, $\mf M(H(y_0))=A^*_{\mc D(H(y))}(H(y_0))\geq 2\pi^2$. This gives that $y_0\notin Z$. Hence $H(y_0)\notin C^\infty(\mc T_{min},2\eps_1)$. Contradiction, thus $\oli\sigma\neq 0$. This completes the proof of Lemma \ref{lem:nontrivial homology}.
\end{proof}

Note that by Poincar\'{e} duality \cite{Hat-AT-book}*{Theorem 3.43}, the cap product with a fundamental class $[M]\in H_9(\mc X_9(\delta), \partial 
 \mc X_9(\delta); \mb Z_2)$ gives isomorphisms $D:H^k(\mc X_9(\delta);\mb Z_2)\to H_{9-k}(\mc X_9(\delta),\partial \mc X_9(\delta);\mb Z_2)$. Note that for each closed submanifold $\Gamma\subset \mb {RP}^2\times \mb{RP}^2$ which represents a nontrivial homology class $[\Gamma]\in H_j(\mb{RP}^2\times \mb{RP}^2;\mb Z_2)$, we have that 
 \[    [\Gamma\ttimes[-\delta,\delta]\times \oli B^4]\neq 0\in H_{j+5}(\mc X_{9}(\delta),\partial \mc X_9(\delta);\mb Z_2).\]

 We conclude that 
 \[
 H_j(\mc X_{9}(\delta),\partial \mc X_9(\delta);\mb Z_2)=\left\{\begin{aligned}
     &0  \quad &i=1,2,3,4\\
     &\mathbb{Z}_2 = \langle [[-\delta,\delta]\times \oli B^4(\delta)] \rangle  & i = 5\\
     &\mathbb{Z}_2 \oplus \mathbb{Z}_2   & i = 6\\
     &\mathbb{Z}_2 \oplus \mathbb{Z}_2 \oplus \mathbb{Z}_2  &i = 7\\
     &\mathbb{Z}_2 \oplus \mathbb{Z}_2 &i = 8\\
     &\mathbb{Z}_2 = \langle [\mathbb{RP}^2\times \mb{RP}^2 \ttimes[-\delta,\delta] \times \oli B^4(\delta) ]\rangle & i = 9
 \end{aligned}
 \right.
 \]
 
Then the same argument in Lemma \ref{lem:nontrivial homology} gives that 
for each closed submanifold $\Gamma\subset \mb {RP}^2\times \mb{RP}^2$ which represents a nontrivial homology class $[\Gamma]\in H_j(\mb{RP}^2\times \mb{RP}^2;\mb Z_2)$, $\Phi_*[\Gamma\ttimes[-\delta,\delta]\times \oli B^4]\neq 0\in H_{j+5}(\omc T^t,\omc T^s\cup\omc S^t))$ by taking suitable $s<2\pi^2<t$.  
\begin{lemma}\label{lem:local nontrivial homology injective}
For every $\eps>0$, there exists $2\pi^2-\eps<s<2\pi^2<t<2\pi^2+\eps$ such that the relative homology class $\Phi_*:H_*(\mc X_9(\delta),\partial \mc X_9(\delta);\mb Z_2 )\to H_*(\omc T^t,\omc T^s\cup\omc S^t)$ is injective.
\end{lemma}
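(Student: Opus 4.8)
The plan is to reduce the injectivity of $\Phi_*$ on $\mb Z_2$-homology to the nonvanishing of $\Phi_*$ on the individual ``local'' classes, which was obtained just above by the argument of Lemma~\ref{lem:nontrivial homology}, using Poincar\'e--Lefschetz duality for $\mc X_9(\delta)$ together with the mod-$2$ Thom isomorphism, and then observing that only finitely many classes are involved so that a single pair $(s,t)$ works for all of them.

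First I would make the homological reduction precise. The space $\mc X_9(\delta)=\mb{RP}^2\times\mb{RP}^2\ttimes[-\delta,\delta]\times\oli B^4(\delta)$ is the total space of the closed-disk bundle over $\mb{RP}^2\times\mb{RP}^2$ associated with the rank-$5$ bundle $L\oplus\underline{\mb R^4}$, where $L$ is the real line bundle whose first Stiefel--Whitney class is the sum of the two degree-one generators of $H^1(\mb{RP}^2\times\mb{RP}^2;\mb Z_2)$ (read off from the equivalence relation defining $\ttimes$). Consequently the mod-$2$ Thom isomorphism, which is compatible with cap product against $[\mc X_9(\delta),\partial\mc X_9(\delta)]$ and with Poincar\'e duality on the base (\cite{Hat-AT-book}*{Theorem 3.43}), identifies $H_k(\mc X_9(\delta),\partial\mc X_9(\delta);\mb Z_2)$ with $H_{k-5}(\mb{RP}^2\times\mb{RP}^2;\mb Z_2)$, carrying $[\Gamma\ttimes[-\delta,\delta]\times\oli B^4(\delta)]$ to $[\Gamma]$ for every closed manifold $\Gamma$ mapped into $\mb{RP}^2\times\mb{RP}^2$. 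Since (classically, by Thom) every $\mb Z_2$-homology class of the closed manifold $\mb{RP}^2\times\mb{RP}^2$ is the image of a fundamental class of such a $\Gamma$, every nonzero $\tau\in H_k(\mc X_9(\delta),\partial\mc X_9(\delta);\mb Z_2)$ has the form $[\Gamma_\tau\ttimes[-\delta,\delta]\times\oli B^4(\delta)]$ with $0\neq[\Gamma_\tau]\in H_{k-5}(\mb{RP}^2\times\mb{RP}^2;\mb Z_2)$. Thus it suffices to prove, with a choice of $s<2\pi^2<t$ independent of $\Gamma$, that $\Phi_*[\Gamma\ttimes[-\delta,\delta]\times\oli B^4(\delta)]\neq 0$ for every closed $\Gamma$ with $[\Gamma]\neq 0$.

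Second I would run the proof of Lemma~\ref{lem:nontrivial homology} verbatim with $\mc X_9(\delta)$ replaced by $X_\Gamma:=\Gamma\ttimes[-\delta,\delta]\times\oli B^4(\delta)$, the disk bundle of $\Gamma^*(L\oplus\underline{\mb R^4})$. Assume $\Phi_*[X_\Gamma]=0$; then there are a $\Delta$-complex $Y$ and a continuous $H\colon Y\to\omc T^t$ with $\partial Y=X_\Gamma\cup Z$ (disjoint), $H|_{X_\Gamma}=\Phi|_{X_\Gamma}$, and $\sup_{y\in Z}\mf M(H(y))<s$. After barycentric subdivision let $W$ be the union of the top simplices mapped by $H$ into $C^\infty(\mc T_{min},3\eps_1)$, so $\partial W=X_\Gamma\cup B'$ with $\partial B'=\partial X_\Gamma$, and every point of $B'\subset W$ is mapped by $H$ into $C^\infty(\mc T_{min},3\eps_1)\setminus C^\infty(\mc T_{min},2\eps_1)$. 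Consider $\wti P:=\big(\mc D\circ H,\;Q^{\mc D\circ H}\circ H\big)\colon B'\to\mc T_{min}\ttimes\mb R\times\mb R^4$, which on $\partial B'=\partial X_\Gamma=\Gamma\ttimes\partial([-\delta,\delta]\times\oli B^4(\delta))$ agrees with $P$; since $P(\Sigma,0,0)=(\Sigma,0)$ and $DP=\Id$ there, $P$ is a local diffeomorphism near the zero-section, its first component restricted to $\partial X_\Gamma$ is homotopic to $\Gamma\hookrightarrow\mc T_{min}$, and its second component wraps with degree one around $\mb R^5\setminus\{0\}$ on each fibre sphere; as $[\Gamma]\neq 0$ this gives $(\wti P|_{\partial B'})_*[\partial B']\neq 0$ in $H_*(\mc T_{min}\times(\mb R^5\setminus\{0\});\mb Z_2)$. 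Because $[\partial B']=0$ in $H_*(B')$, the map $\wti P$ cannot take values in $\mc T_{min}\times(\mb R^5\setminus\{0\})$, so $\wti P(y_0)\in\mc T_{min}\times\{0\}$ for some $y_0\in B'$. Then $\|H(y_0)\|(S^3)=A^*_{\mc D(H(y_0))}(H(y_0))<t<2\pi^2+\kappa(\eps_1,\gamma_1)$ and $H(y_0)\in C^\infty(\mc T_{min},3\eps_1)\subset C^\infty(\mc T_{min},\gamma_1)$ force $H(y_0)\in C^\infty(\mc T_{min},\eps_1)$, whence $\|H(y_0)\|(S^3)\geq 2\pi^2>s$ (ruling out $y_0\in Z$) while $y_0\notin Z$ gives $H(y_0)\notin C^\infty(\mc T_{min},2\eps_1)$ --- exactly the contradiction of Lemma~\ref{lem:nontrivial homology}. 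The constants $\eps_1,\delta,\gamma,\kappa,s,t$ are those of Lemma~\ref{lem:nontrivial homology} and depend only on the round geometry and on $\eps$, not on $\Gamma$; hence the same $s\in(2\pi^2-\eps,2\pi^2)$ and $t\in(2\pi^2,2\pi^2+\eps)$ give $\Phi_*[X_\Gamma]\neq 0$ for all admissible $\Gamma$, and by the reduction above $\Phi_*$ is injective.

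The step I expect to be the main obstacle is the degree computation in the previous paragraph --- checking that $\wti P$ restricted to the fibre sphere bundle $\partial X_\Gamma$ represents a class in $H_*(\mc T_{min}\times(\mb R^5\setminus\{0\});\mb Z_2)$ that is nonzero precisely because its leading K\"unneth component is $[\Gamma]\otimes[S^4]$. This is a direct enhancement of the degree argument in Lemma~\ref{lem:nontrivial homology}, where $\Gamma$ was the whole base (or a point): one uses that $P$ is a local diffeomorphism near $\mc T_{min}\times\{0\}$ (since $DP=\Id$ there) so that the ``$\mb R^5$-direction'' detects the linking with the zero-section with degree one, while the ``$\mb{RP}^2\times\mb{RP}^2$-direction'' records $[\Gamma]$ faithfully. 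The remaining ingredients --- the White-type graph estimate controlling $A^*_\Sigma$, the strict stability of Clifford tori for $A^*_\Sigma$, and the combinatorial structure of the filling $H$, $W$, $B'$, $Z$ --- are identical to that proof.
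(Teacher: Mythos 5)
Your proposal is correct and follows essentially the same route as the paper, which establishes the isomorphism $H_k(\mc X_9(\delta),\partial\mc X_9(\delta);\mb Z_2)\cong H_{k-5}(\mb{RP}^2\times\mb{RP}^2;\mb Z_2)$ via Poincar\'e--Lefschetz duality, represents the nonzero classes by closed (sub)manifolds $\Gamma$, and then rerurs the degree argument of Lemma~\ref{lem:nontrivial homology} with $\Gamma\ttimes[-\delta,\delta]\times\oli B^4(\delta)$ in place of the full $\mc X_9(\delta)$; you spell out the step that the paper compresses to ``the same argument in Lemma \ref{lem:nontrivial homology} gives that \dots'', and you replace embedded submanifolds by Thom/Steenrod representatives (closed manifolds mapped in), which is a harmless and in fact slightly more robust variant.
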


\subsection{Homology groups of relative spaces}
Now we are ready to determine the homology groups $H_*(\omc T,\omc S;\mb Z_2)$.
\begin{proposition}\label{prop:homology isomorphism}
There exists an isomorphism $\omc D:H_j(\mb{RP}^2\times\mb{RP}^2;\mb Z_2)\to H_{j+5}(\omc T,\omc S;\mb Z_2)$ for all integers $j\geq 0$.
\end{proposition}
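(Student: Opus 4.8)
The plan is to combine the Relative Strong Morse Inequalities (Theorem \ref{thm:relative strong Morse inequality}) with the explicit nontrivial classes constructed in Lemma \ref{lem:local nontrivial homology injective}, together with White's Perturbation Theorem, to pin down the homology groups exactly on the round metric and then transfer the conclusion to $\omc T$ itself. The key numerical observation is that the Betti numbers of $\mb{RP}^2\times\mb{RP}^2$ with $\mb Z_2$ coefficients are $1,2,3,2,1$ in degrees $0,1,2,3,4$, so the target groups we want are $\mb Z_2,\mb Z_2^2,\mb Z_2^3,\mb Z_2^2,\mb Z_2$ in degrees $5,6,7,8,9$; in particular the alternating sums satisfy the Morse inequalities with equality throughout (each Clifford torus has index $5$, and the local contribution of an index-$k$ minimal torus to $\mk s_j(t,s)$ is $(-1)^{k-j}$ as in Proposition \ref{prop:local relative min-max}).

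First I would fix the round metric $g_0$ and take a sequence of bumpy metrics $g_i\to g_0$ with positive Ricci curvature. By White's Perturbation Theorem \cite{Whi91}*{Theorem 3.2}, for $i$ large every embedded $g_i$-minimal torus lies in a small $C^\infty$-neighborhood of some Clifford torus, so the number of embedded minimal tori of each index is controlled: since the Clifford tori form the manifold $\mc T_{min}\cong\mb{RP}^2\times\mb{RP}^2$ of critical points of the perturbed (nondegenerate) functional, the count $c_k(\ell)$ for $\ell$ slightly bigger than $2\pi^2$ equals the $k$-th Morse number contribution of $\mb{RP}^2\times\mb{RP}^2$ shifted by $5$, i.e.\ $c_{5}=1$, $c_6=2$, $c_7=3$, $c_8=2$, $c_9=1$, and $c_k=0$ otherwise. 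Plugging these into Theorem \ref{thm:relative strong Morse inequality} gives upper bounds $\beta_k(\ell)\le$ the corresponding values (via the alternating-sum inequalities run both for $k$ and $k-1$, which forces $\beta_k(\ell)\le c_k$ when the $c_k$'s realize equality, exactly as in the classical Morse-inequality argument). On the other hand, Lemma \ref{lem:local nontrivial homology injective} exhibits, for suitable $s<2\pi^2<t$, an injection $H_j(\mc X_9(\delta),\partial\mc X_9(\delta);\mb Z_2)\hookrightarrow H_{j+5-5}(\cdots)$—more precisely, for every nontrivial $[\Gamma]\in H_j(\mb{RP}^2\times\mb{RP}^2;\mb Z_2)$ the class $\Phi_*[\Gamma\ttimes[-\delta,\delta]\times\oli B^4]$ is nonzero in $H_{j+5}(\omc T^t,\omc T^s\cup\omc S^t)$—so $\beta_{j+5}(\ell)\ge b_j(\mb{RP}^2\times\mb{RP}^2)$ for $\ell$ just above $2\pi^2$. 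The two bounds match, so $H_{j+5}(\omc T^t,\omc T^s\cup\omc S^t;\mb Z_2)\cong H_j(\mb{RP}^2\times\mb{RP}^2;\mb Z_2)$, with the isomorphism realized by $\Phi_*$ on the explicit generators.

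Next I would remove the area truncation. By the Relative Homology Min-max Theorem \ref{thm:relative homology min-max} and the Lawson Conjecture (every embedded minimal torus in round $S^3$ is a Clifford torus, hence has area exactly $2\pi^2$), there is no embedded minimal torus with area in $(2\pi^2,\infty)$ and no embedded minimal sphere other than equators (area $<2\pi^2$) and none with area exactly $2\pi^2$ except tori; running the deformation argument of Theorem \ref{thm:relative homology min-max} (Cases I–III) shows the inclusions $(\omc T^t,\omc T^s\cup\omc S^t)\hookrightarrow(\omc T,\omc S^t)\hookrightarrow(\omc T,\omc S)$ induce isomorphisms on $\mb Z_2$-homology: above area $2\pi^2+\eps$ there are no critical values of the area functional on $\omc T$ relative to $\omc S$ that produce new torus classes, so $\omc T$ deformation retracts (weakly, on homology) onto $\omc T^{2\pi^2+\eps}\cup\omc S$, and similarly $\omc T^{2\pi^2-\eps}\cup\omc S$ onto $\omc S$ since there are no minimal tori of area below $2\pi^2$. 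Hence $H_*(\omc T,\omc S;\mb Z_2)\cong H_*(\omc T^t,\omc T^s\cup\omc S^t;\mb Z_2)$ for any $s<2\pi^2<t$ in the relevant range, and composing with the isomorphism from the previous paragraph gives the desired $\omc D$. I would check functoriality/naturality of these identifications so that $\omc D$ is genuinely induced by $\Phi$ and independent of the auxiliary choices.

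The main obstacle I expect is the transfer step between bumpy metrics and the round metric: the Relative Strong Morse Inequalities and the exact count $c_k$ are only available for bumpy metrics (Proposition \ref{prop: a choice of generic metric}), while the explicit $9$-parameter family and the value $2\pi^2$ live on the round metric, so I must verify that the relative homology groups $H_*(\omc T^t,\omc T^s\cup\omc S^t;\mb Z_2)$ are continuous under the $C^\infty$-convergence $g_i\to g_0$ in the sense that, for fixed $s<2\pi^2<t$ with no minimal tori of $g_0$ in $[s,2\pi^2)\cup(2\pi^2,t)$, the spaces $\omc T^t_{g_i}$ and $\omc T^t_{g_0}$ (which differ only by a reparametrization of area) are homotopy equivalent for large $i$—this is where White's Perturbation Theorem does the real work, confining all $g_i$-minimal tori near the Clifford family. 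A secondary subtlety is ensuring that the upper bound from the Morse inequalities is sharp in every degree simultaneously (not just in the alternating sums); this follows because the $c_k$ sequence $0,\dots,0,1,2,3,2,1$ is itself a ``perfect'' Morse sequence for $\mb{RP}^2\times\mb{RP}^2$ (its alternating sums equal those of the Betti numbers), so the inequalities $\mk s_k\le\sum(-1)^{k-m_i}$ and $\mk s_{k-1}\le\cdots$ combine to force $\beta_k=c_k$ degree by degree, exactly as in the classical lacunary argument.
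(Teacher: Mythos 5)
Your proposal assembles the same ingredients as the paper's proof — Lemma \ref{lem:local nontrivial homology injective} for the lower bound, White's Perturbation Theorem combined with the Relative Strong Morse Inequalities for the upper bound, and the Relative Homology Min-max Theorem plus Brendle's Lawson Conjecture to deal with the area cutoff — so the two arguments are close in spirit. The structural difference is the order of operations, and it matters. You first pin down the truncated groups $H_*(\omc T^t,\omc T^s\cup\omc S^t;\mb Z_2)$ and then remove the truncation; because the Morse inequalities are applied to the bumpy metrics $g_i$ while the explicit $9$-parameter family lives on the round $g_0$, this forces you to compare the sublevel sets $\omc T^\ell_{g_i}$ with $\omc T^\ell_{g_0}$. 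You identify this ``transfer step'' as the main obstacle and only sketch how White's theorem might close it, but confining the $g_i$-minimal tori near the Clifford family does not by itself give a homotopy or homology equivalence of area sublevel sets under $g_i\to g_0$, so as written this is a real gap in your argument. The paper sidesteps the comparison entirely: it first proves, for the round metric, that $H_j(\omc T,\omc T^t\cup\omc S;\mb Z_2)$ and $H_j(\omc T^s,\omc S^s;\mb Z_2)$ are trivial for $j\ge 1$ (via a compactness argument with bumpy metrics $g_k\to g_{round}$, using that Clifford tori are the only embedded minimal tori and have area exactly $2\pi^2$), and then uses excision and the long exact sequence to get $H_j(\omc T,\omc S;\mb Z_2)\simeq H_j(\omc T^t,\omc T^s\cup\omc S^t;\mb Z_2)$. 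After that, both bounds — the lower bound from Lemma \ref{lem:local nontrivial homology injective} and the upper bound $\sum_j\mr{rank}\,H_j(\omc T,\omc S;\mb Z_2)\le 9$ from the strong Morse inequalities on the $g_k$ — are statements about the metric-independent groups $H_j(\omc T,\omc S;\mb Z_2)$, so no cross-metric sublevel-set comparison ever arises. That reordering, together with the triviality claims (which your proposal gestures at but does not actually prove), is the essential content your sketch is missing.
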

\begin{proof}
Let $t,s$ be the two real numbers in Lemma \ref{lem:local nontrivial homology injective}. Recall that by  proved by S. Brendle \cite{Brendle-Lawson-conj}, every embedded minimal torus in the round $S^3$ is a Clifford torus.

\begin{claim}\label{claim:trvial toplogy for larger than t}
    $H_j(\omc T,\omc T^t\cup\omc S;\mb Z_2)$ is trivial for all $j\geq 1$.
\end{claim} 
\begin{proof}[Proof of Claim \ref{claim:trvial toplogy for larger than t}]
Suppose not, there exists a relative homology group $[\tau_j]\in H_j (\omc T,\omc S;\mb Z_2)$. Let $g_k$ be a sequence of bumpy metrics on $S^3$ smoothly converging to the round metric $g_{round}$.
Then by the Relative Homology Min-max Theorem \ref{thm:relative homology min-max}, for each $k$, there exists an embedded minimal torus $\Sigma_k$ with index $k$ and 
\[    \mc H^2_{g_k}(\Sigma_k)= W_{g_k}([\tau_j]):= \inf_{[\sum \mk s_i] = \tau} \sup_{i, x \in \Delta^k} \mathcal{H}^2_{g_k}(\mk s_i(x)). \]
Note that as $k\to\infty$,
\[    W_{g_k}([\tau_j])\to W_{g_{round}}([\tau_j])\geq t.\]
Thus by the compactness of embedded minimal surfaces, $\Sigma_k$ converges to an embedded minimal torus in $S^3$ w.r.t. the round metric, which must be a Clifford torus. Recall that the Clifford torus has area $2\pi^2<t$, which leads to a contradiction. Hence the claim is proved.
\end{proof}

Similarly, the same argument gives that 
\begin{claim}\label{claim:trvial toplogy for less than s}
    $H_j(\omc T^s,\omc S^s;\mb Z_2)$ is trivial for all $j\geq 1$.
\end{claim}   

Observe that by Excision Theorem \cite{Hat-AT-book}*{Theorem 2.20} and Claim \ref{claim:trvial toplogy for less than s},
\[  H_j(\omc T^s\cup\omc S^t,\omc S^t;\mb Z_2)\simeq H_j(\omc T^s,\omc S^s;\mb Z_2)=\{0\} \quad \text{ for all } j\geq 1.\] 
Now consider the short exact sequence 
\[    (\omc T^s\cup \omc S^t,\omc S^t)\to (\omc T^t,\omc S^t)\to (\omc T^t,\omc T^s\cup \omc S^t).\]
The induced long exact sequence gives that for all $j\geq 1$,
\begin{equation}\label{eq:t0 to ts}
  H_j(\omc T^t,\omc S^t;\mb Z_2)\simeq H_j(\omc T^t,\omc T^s\cup \omc S^t;\mb Z_2).
  \end{equation} 
Then consider the short exact sequence 
\[   (\omc T^t\cup \omc S,\omc S)\to (\omc T,\omc S)\to (\omc T,\omc T^t\cup \omc S).\] 
The induced long exact sequence together with Claim \ref{claim:trvial toplogy for larger than t} gives that 
\[    H_j(\omc T^t\cup \omc S,\omc S;\mb Z_2)\simeq H_j(\omc T,\omc S;\mb Z_2).\]
Then by Excision Theorem \cite{Hat-AT-book}*{Theorem 2.20} and \eqref{eq:t0 to ts}, 
\[    H_j(\omc T,\omc S;\mb Z_2)\simeq H_j(\omc T^t\cup \omc S,\omc S;\mb Z_2)\simeq H_j(\omc T^t,\omc S^t;\mb Z_2)\simeq H_j(\omc T^t,\omc T^s\cup \omc S^t;\mb Z_2).\]
This together with Lemma \ref{lem:local nontrivial homology injective} implies that 
\[   \mr{rank}\,H_j(\omc T,\omc S;\mb Z_2)\geq \mr{rank}\, H_{j+5}(\mb {RP}^2\times \mb{RP}^2).\]
In particular,
\begin{equation}\label{eq:rank larger than 9}
     \sum_{j=1}^\infty\mr{rank}\,H_j(\omc T,\omc S;\mb Z_2)\geq \sum_{j=1}^\infty\mr{rank}\, H_{j+5}(\mb {RP}^2\times \mb{RP}^2)=9
\end{equation}

It remains to prove the equality. Recall that by White \cite{Whi91}*{Theorem 3.2}, there exists a sequence of metrics $\wti g_k\to g_{round}$ such that each $(S^3,g_k)$ has exactly 9 embedded minimal tori with index less than or equal to 9. By perturbing $g_k$ (still denoted by $g_k$), one can assume that $g_k$ is bumpy. Then by the strong Morse inequalities,
\[    \sum_{j}\mr{rank}\,H_j(\omc T,\omc S;\mb Z_2)\leq 9.\]
This together with \eqref{eq:rank larger than 9} gives the equality.    
\end{proof}

\subsection{Cohomology and Lusternik-Schnirelmann Theory}
For simplicity, denote by $\mc X:=\mc X_9(\delta)$.
Denote by $\lambda,\mu$ the two generators of $H^1(\mc X_9(\delta);\mb Z_2)\simeq H^1(\mb {RP}^2\times \mb {RP}^2)$ such that 
\[   \lambda(\mb {RP}^1_2\times \{q\})= \mu(\{p\}\times \mb {RP}^1_1)=0 ;\quad  \lambda(\mb {RP}^1_1\times \{q\})= \mu(\{p\}\times \mb {RP}^1_2)=1.\]
Denote by $\bar\lambda$ the generator of $\mc Z_2(S^3;\mb Z_2)$.  
Consider the natural inclusion 
\[  
\begin{tikzcd}
   \Psi: \mc X\arrow[r, "\Phi"] &\mc T\arrow[r, "\iota_1"] &\omc T\arrow[r, "\iota_2"] &\mc Z_2(S^3;\mb Z_2).
\end{tikzcd}
\]
Since $\Psi|_{\mb {RP}^1_1}$ and $\Psi|_{\mb {RP}^1_2}$ are both nontrivial sweepouts, then 
\[   \bar\lambda\Big( \Psi_*([\mb {RP}^1_1])\Big)= \bar\lambda \Big(\Psi_*([\mb {RP}^1_1]\Big)=1. \]
This implies that 
\[ \Psi^*(\bar\lambda)=\lambda +\mu.\]   
Consider the cap product 
\[ \begin{tikzcd}[column sep=small]
H_{k+5}(\mc X,\partial \mc X;\mb Z_2)\arrow[d] &\times &H^1(\mc X;\mb Z_2) \arrow [r, "\smallfrown"]&  H_{k+4}(\mc X,\partial \mc X;\mb Z_2)\arrow[d,"\simeq"]\\
H_{k+5}(\omc T^t,\omc T^s\cup \omc S^t;\mb Z_2)&\times &H^1(\omc T^t;\mb Z_2) \arrow[u]\arrow [r, "\smf"]&  H_{k+4}(\omc T^t,\omc T^s\cup \omc S^t;\mb Z_2)
\end{tikzcd}\]
is natural ($k=1,2,3,4$). Observe that the homeomorphism 
\[   \mc C:H_{k+3}(\mc X,\partial \mc X;\mb Z_2)\to  H_{k}(\mc X,\partial \mc X;\mb Z_2)\quad     \sigma\mapsto \sigma\smf (\lambda+\mu)^3 
\]
is a nontrivial for $k=5,6$. Together with the fact that 
\[H^1(\mc Z_2(S^3;\mb Z_2);\mb Z_2)\to H^1(\omc T^t;\mb Z_2)\to H^1(\mc X;\mb Z_2)
\]
is surjective, hence we have that 
\[   \mc C:H_{k+3}(\omc T^t,\omc T^s\cup \omc S^t;\mb Z_2)\to  H_{k}(\omc T^t,\omc T^s\cup \omc S^t;\mb Z_2)  \quad \sigma\mapsto \sigma\smf u^3\]
is nontrivial for some element $u\in H^1(\omc T^t;\mb Z_2)$. Note that by the naturality of the long exact sequences in Proposition \ref{prop:homology isomorphism}, we conclude that
\[ 
 \mc C:H_{k+3}(\omc T,\omc S;\mb Z_2)\to  H_{k}(\omc T,\omc S;\mb Z_2)  \quad \sigma\mapsto \sigma\smf u^ 3 \quad k=5,6
\]
is nontrivial. 

Recall that for any $\tau\in H_*(\omc T,\omc S;\mb Z_2)$,
\[ W(\tau) = \inf_{[\sum \mk s_i] = \tau} \sup_{i, x \in \Delta^k} \mathcal{H}^2(\mk s_i(x)). \]

\begin{theorem}
Let $g$ be a metric on $S^3$ with positive Ricci curvature.  Suppose that $(S^3, g)$ contains only finitely many embedded minimal tori. Then 
\[
    0 < W(\sigma\smf u^3)<W(\sigma\smf u^2)<W(\sigma \smf u)<W(\sigma),
   \]
where $\sigma\in H_9(\omc T,\omc S;\mb Z_2)$ is the generator. 
As a consequence, any $(S^3,g)$ with positive Ricci curvature contains at least four distinct embedded minimal tori.    
\end{theorem}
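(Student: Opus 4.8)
The plan is to combine a Lusternik--Schnirelmann argument with the Relative Homology Min-max Theorem~\ref{thm:relative homology min-max}. If $(S^3,g)$ carries infinitely many embedded minimal tori there is nothing to prove, so assume it carries only finitely many. By Proposition~\ref{prop:homology isomorphism} we have $H_9(\omc T,\omc S;\mb Z_2)\cong H_4(\mb{RP}^2\times\mb{RP}^2;\mb Z_2)\cong\mb Z_2=\langle\sigma\rangle$, and the cohomology computation preceding the statement shows that $\sigma\mapsto\sigma\smf u^3$ is a nontrivial map $H_9(\omc T,\omc S;\mb Z_2)\to H_6(\omc T,\omc S;\mb Z_2)$, where $u$ is the degree-one class pulled back from $H^1(\mc Z_2(S^3;\mb Z_2);\mb Z_2)$. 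Since $\sigma\smf u^{j+1}=(\sigma\smf u^j)\smf u$, this already forces $\sigma,\sigma\smf u,\sigma\smf u^2,\sigma\smf u^3$ to be non-trivial, living in $H_9,H_8,H_7,H_6(\omc T,\omc S;\mb Z_2)$ respectively. Consequently $W(\sigma\smf u^j)\in(0,\infty)$ is well defined for $j=0,1,2,3$: finiteness is clear (it lies in $[s,t)$ for the numbers $s,t$ of Lemma~\ref{lem:local nontrivial homology injective}), and positivity follows exactly as in the first paragraph of the proof of Theorem~\ref{thm: index lower bound}, since a near-optimal relative sweepout produces via Simon--Smith min-max an embedded minimal torus or sphere whose area is bounded below by the compactness of minimal surfaces of bounded topology. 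Monotonicity is immediate: given a near-optimal sweepout $\Phi\colon X\to\omc T$ for $\sigma\smf u^j$, the class $(\sigma\smf u^j)\smf u$ is represented by the restriction of $\Phi$ to a subcomplex Poincar\'e--Lefschetz dual to $u$, over which $\sup\mc H^2\circ\Phi$ can only decrease; hence $W(\sigma\smf u^3)\le W(\sigma\smf u^2)\le W(\sigma\smf u)\le W(\sigma)$.

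Next I would upgrade these to strict inequalities using the finiteness hypothesis. Suppose $W(\sigma\smf u^{j+1})=W(\sigma\smf u^j)=:L$ for some $j\in\{0,1,2\}$ and run the Lusternik--Schnirelmann deformation argument (in the spirit of \cite{Marques-Montezuma-Neves20} and \cite{MN18}*{\S7}) in the relative pair $(\omc T,\omc S)$. Because $g$ has positive Ricci curvature there is no stable minimal surface, so the Multiplicity One Theorem for Simon--Smith min-max \cite{wangzc2023existenceFour} guarantees that the critical set at level $L$ consists of multiplicity one embedded minimal tori and spheres; the relative set-up is designed precisely so that the spherical components are invisible to $H_\ast(\omc T,\omc S;\mb Z_2)$, being cut away by means of White's Local Min-max Theorem \cite{MN18}*{Theorem 6.1} together with the deformation Theorems~\ref{thm: Deform away from nonsmooth critical varifolds} and \ref{thm: deform the sweepout if all its values are close to a minimal cycle and a different current}, exactly as in Case~I of the proof of Theorem~\ref{thm:relative homology min-max}. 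Letting $\mc C_L$ be the (finite) set of embedded minimal tori of area $L$, one then shows $\omc T^{L+\eps}\cup\omc S$ deformation retracts, relative to $\omc S$, onto $\big(\omc T^{L-\eps}\cup\omc S\big)\cup\bigcup_{\Sigma\in\mc C_L}U_\Sigma$, where each $U_\Sigma$ deformation retracts onto $\Sigma$, so that $u$ restricts to $0$ on every $U_\Sigma$. The Lusternik--Schnirelmann principle then pushes $\sigma\smf u^{j+1}$ into $\omc T^{L-\eps}\cup\omc S$, yielding $W(\sigma\smf u^{j+1})\le L-\eps<L$, a contradiction. Hence $W(\sigma\smf u^3)<W(\sigma\smf u^2)<W(\sigma\smf u)<W(\sigma)$.

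It remains to realize each width by a torus. Approximate $g$ by bumpy metrics $g_k\to g$ of positive Ricci curvature; by Theorem~\ref{thm:relative homology min-max} there exist embedded minimal tori $\Sigma^j_k$ with $\mc H^2_{g_k}(\Sigma^j_k)=W_{g_k}(\sigma\smf u^j)$, and $W_{g_k}(\sigma\smf u^j)\to W_g(\sigma\smf u^j)$ because $W$ depends continuously on the metric. By the compactness of minimal surfaces of bounded genus and area, a subsequence of $\{\Sigma^j_k\}$ converges to an embedded minimal torus or sphere of area $W_g(\sigma\smf u^j)$; the relative genus lower bound --- the same mechanism as in Theorem~\ref{thm:relative homology min-max}, now propagated through the approximation --- excludes the spherical limit, so we obtain an embedded minimal torus $\Sigma^j$ in $(S^3,g)$ with $\mc H^2_g(\Sigma^j)=W_g(\sigma\smf u^j)$. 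Since the four numbers $W(\sigma\smf u^3)<W(\sigma\smf u^2)<W(\sigma\smf u)<W(\sigma)$ are distinct, $\Sigma^0,\Sigma^1,\Sigma^2,\Sigma^3$ are pairwise distinct, and $(S^3,g)$ contains at least four embedded minimal tori.

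The main obstacle I anticipate is the Lusternik--Schnirelmann deformation in the relative setting: one must control the \emph{entire} critical set at the equality level $L$ --- excluding higher multiplicity, where positive Ricci curvature and \cite{wangzc2023existenceFour} are essential, and separating the genuinely spherical pieces from the toroidal ones --- and then perform the cut-and-paste so that, after descending to $(\omc T,\omc S)$, the neighborhoods $U_\Sigma$ carry no degree-one cohomology. This is delicate precisely because $\omc T$ degenerates onto $\omc S$ along the generalized smooth topology, so every interpolation and homotopy invoked must remain admissible there; the analogous no-degeneration point in the realization step for non-bumpy $g$ is a secondary difficulty of the same flavor.
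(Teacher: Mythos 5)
Your high-level strategy is the right one and matches the paper's framework (identify the cohomological ring structure via $u$, use Lusternik--Schnirelmann to turn distinct degrees into distinct widths, realize each width by a torus). Your monotonicity observation is essentially the paper's, restated through Poincar\'e--Lefschetz duality: the key identity is $\Phi_*\bigl([X]\smf\Phi^*u\bigr)=\Phi_*[X]\smf u$, so representatives of $\sigma\smf u^{j+1}$ live inside the image of a sweepout for $\sigma\smf u^j$. Likewise the realization step via bumpy approximation is the same mechanism the paper uses implicitly (cf.\ the proof of Claim~\ref{claim:trvial toplogy for larger than t}). However, for the crucial strict inequality your proposal diverges from the paper, and the divergence hides a genuine gap.

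You run the strictness step by asserting a deformation retraction of $\omc T^{L+\eps}\cup\omc S$ onto $(\omc T^{L-\eps}\cup\omc S)\cup\bigcup_\Sigma U_\Sigma$ relative to $\omc S$, with each $U_\Sigma$ contractible onto a critical torus $\Sigma\in\mc C_L$, and then push $\sigma\smf u^{j+1}$ down because $u|_{U_\Sigma}=0$. This is the ``Morse-theoretic handle attachment'' phrasing of Lusternik--Schnirelmann, but in this setting no such retraction is available: the area functional on $\omc T$ admits no well-behaved negative gradient flow (mean curvature flow develops singularities, and the generalized smooth topology near $\omc S$ is degenerate), so the sub-level set structure cannot simply be retracted past the critical level. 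Proving a retraction of this kind would essentially require reproving the entire Simon--Smith min-max interpolation machinery, and you correctly flag this as ``the main obstacle.'' The paper avoids the retraction entirely. Instead it takes a $9$-sweepout $\Phi:(X,\partial X)\to(\omc T,\omc S)$ with $\Phi_*[X]=\sigma$, sets $Z=\{x:\mathbf F(|\Phi(x)|,\mc T_{min})<\eta\}$ and $Y=\overline{X\setminus Z}$, and shows by a cap-product diagram chase (naturality of $\smf$, the pair exact sequences for $(X,Y)$ and $(X,Z)$, and the vanishing $(\Phi\circ i_1)^*=0$ since $\Phi(Z)$ lies in a contractible $\mathbf F$-ball around $\mc T_{min}$) that $(\Phi\circ i_2)_*[Y]=\sigma\smf u$. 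Applying Theorem~\ref{thm:relative homology min-max} to this restricted sweepout, which stays $\mathbf F$-far from $\mc T_{min}$, produces a torus of area $W(\sigma\smf u)=W(\sigma)$, i.e.\ a member of $\mc T_{min}$ --- a contradiction. No flow, no retraction, just cohomological bookkeeping on a single near-optimal sweepout plus the min-max theorem already proved. You should adopt that route; otherwise you would need to supply a nontrivial deformation construction that the paper deliberately circumvents.

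One secondary point on which both you and the paper are terse: for a non-bumpy positive-Ricci metric, realizing each $W(\sigma\smf u^j)$ by an honest torus requires arguing that a sequence of min-max tori for bumpy approximating metrics $g_k\to g$ cannot degenerate to a sphere. This needs the index bound plus curvature estimates to rule out genus drop, and should be stated explicitly when you write up the realization step.
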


\begin{proof}
The proof is essentially the same as the Lusternik-Schnirelmann argument in \cite{MN17}*{Theorem 6.1} (see also \cite{wangzc2023existenceFour}*{Appendix E}). We provide more details here for the sake of completeness.

    We prove the last inequality and the others are similar. Suppose by contradiction that $W(\sigma) = W(\sigma \smf u)$. Let $\Phi: (X, \partial X) \rightarrow (\oli{\mc T}, \oli{\mc S})$ be a $9$-sweepout such that 
    \[
    \Phi_{*}[X] = \sigma\in H_9(\omc T,\omc S;\mb Z_2). 
    \]
    Let $\mc T_{min}$ be the collection of integral varifolds, with mass equal to $W(\sigma)$, whose support are embedded minimal tori. Note that $\mc T_{min}$ is a finite set by assumption.

    Set $Y = \{x \in X: \mathbf{F}(|\Phi(x)|, \mc T_{min}) \geq \eta\}$ and $Z = \overline{X \setminus Y}$. We have $\partial X$ contained in $Y$ but disjoint from $Z$. Let $i_1: Z \rightarrow X$ and $i_2: Y \rightarrow X$ be two natural inclusion maps. Our goal is show that $\Phi \circ i_2: (Y, \partial X) \rightarrow (\oli{\mc T}, \oli{\mc S})$ satisfies
    \[
    (\Phi \circ i_2)_{*}[Y] = \sigma \smf u\in H_8(\omc T,\omc S;\mb Z_2). 
    \]
    Once this is true, the Relative Homology Min-max Theorem \ref{thm:relative homology min-max} gives an embedded minimal torus $\Sigma$ with $\mathcal{H}^2(\Sigma) = W(\sigma \smf u) = W(\sigma)$. Hence $\Sigma \in \mc T_{min}$ and we derive a contradiction. 

    To fulfill our goal, consider the following diagram  
    \[ \begin{tikzcd} [column sep=tiny]
    H_{9}(\oli{\mc T}, \oli{\mc S};\mb Z_2) &\times &H^1(\oli{\mc T};\mb Z_2) \arrow [r, "\smallfrown"] \arrow[d, "\Phi^*"]&  H_{8}(\oli{\mc T},\oli{\mc S};\mb Z_2)\\
    H_{9}(X, \partial X;\mb Z_2) \arrow[u, "\Phi_{*}"] \arrow[d, "r"] &\times &H^1(X;\mb Z_2) \arrow [r, "\smf"] &  H_{8}(X, \partial X;\mb Z_2)\arrow[u,"\Phi_*"] \arrow[d, "j_2"]\\
    H_{9}(X, X;\mb Z_2) = 0 &\times &H^1(X, Z;\mb Z_2) \arrow[u, "j_1"] \arrow [r, "\smf"]&  H_{8}(X, Y;\mb Z_2)
\end{tikzcd}\]
where the maps $j_{1}$ and $j_{2}$ come from the following long exact sequences: 
\[  
\begin{tikzcd}
    \cdots \arrow[r] & H_8(Y, \partial X; \mb Z_2) \arrow[r, "(i_2)_{*}"] & H_8(X, \partial X; \mb Z_2) \arrow[r, "j_2"] & \mc H_{8}(X, Y; \mb Z_2) \arrow[r] & \cdots.  
\end{tikzcd}
\]
\[  
\begin{tikzcd}
    \cdots \arrow[r] & H^1(X, Z; \mb Z_2) \arrow[r, "j_1"] & H^1(X; \mb Z_2) \arrow[r, "{i_1}^{*}"] & \mc H^1(Z; \mb Z_2) \arrow[r] & \cdots.  
\end{tikzcd}
\]

By naturality of cap product, the diagram is commutative. We already know $\sigma \smf u \neq 0 \in H_{8}(\oli{\mc T},\oli{\mc S};\mb Z_2)$ and $[X] \neq 0 \in H_{9}(X, \partial X;\mb Z_2)$. Then there exists $\phi \neq 0 \in H^1(X)$ so that $\Phi^{*}(\phi) = u$. It follows that $[X] \smf \phi \neq 0 \in H_{8}(X, \partial X;\mb Z_2)$ with $\Phi_{*}([X] \smf \phi) = \sigma \smf u$. 

Since $(\Phi \circ i_1)(Z) \subset \mathbf{B}^{\mathbf{F}}_{\eta}(\mc T_{min})$, we have $(\Phi \circ i_1)^* = {i_1}^{*} \circ \Phi^* = 0$. Hence $0 = (i_{1}^{*} \circ \Phi^{*})(u) = i_{1}^*(\phi)$. By exactness, there exists $\xi \neq 0 \in H^1(X, Z; \mb Z_2)$ so that $j_1(\xi) = \phi$.  

Now, suppose by contradiction that $(\Phi \circ i_2)_{*}[Y] \neq \sigma \smf u$. We claim that $j_2([X] \smf \phi) \neq 0 \in H_{8}(X, Y; \mb Z_2)$. It follows from $(\Phi \circ i_2)_{*}[Y] \neq \sigma \smf u$ that $[X] \smf \phi \neq (i_2)_{*}[Y]$. Then the first long exact sequence implies $j_2([X] \smf \phi) \neq 0$.   

As the diagram is commutative, we have 
\[
0 \neq j_2([X] \smf \phi) = r([X]) \smf \xi = 0.
\]
This gives a contradiction and completes the whole proof. 
\end{proof}

\appendix

\bibliographystyle{amsalpha}
\bibliography{minmax}
\end{document}